\newtheorem{theorem}{Theorem}[section]
\newtheorem{lemma}[theorem]{Lemma}
\newtheorem{proposition}[theorem]{Proposition}
\newtheorem{corollary}[theorem]{Corollary}
\theoremstyle{definition}
\newtheorem{definition}[theorem]{Definition}
\theoremstyle{remark}
\newtheorem{remark}[theorem]{Remark}
\numberwithin{equation}{section}
\newcommand{\prend}{$\hfill \Box$}
\newcommand{\ls}{\leqslant}
\newcommand{\gr}{\geqslant}
\def\R{{\mathbb R}}
\def\supp{{\rm supp\,}}
\def\Exp{{\mathbb E\,}}
\def\Var{{\rm Var}}
\def\Prob{{\mathbb P}}
\def\Lip{{\rm Lip}}
\def\Event{{\mathcal E}}
\def\unc{{\rm unc}}
\def\semi{{\Upsilon}}
\begin{document}

\title{Hypercontractivity and lower deviation estimates in normed spaces}

%1st author info
\author{Grigoris Paouris}
\address{Department of Mathematics, Mailstop 3368, Texas A\&M University, College Station, TX, 77843-3368.}  
\email{grigorios.paouris@gmail.com}
\thanks{The first author was supported by the NSF grant DMS-1812240.}

% 2nd author info
\author{Konstantin Tikhomirov}
\address{School of Mathematics, GeorgiaTech, Atlanta, GA, 30332.}
\email{ktikhomirov6@gatech.edu}
\thanks{The second author was partially supported by the Simons foundation.}

% 3rd author info
\author{Petros Valettas}
\address{Mathematics Department, University of Missouri, Columbia, MO, 65211.}
\email{valettasp@missouri.edu}
\thanks{The third author was supported by the NSF grant DMS-1612936 and by Simons Foundation grant 638224.}

\subjclass[2010]{Primary 46B07; Secondary 52A21.}
\keywords{Talagrand's $L^1-L^2$ bound, Alon--Milman theorem, Ornstein--Uhlenbeck semigroup, Gaussian convexity, 
hypercontractivity, superconcentration.}

\date{}

\begin{abstract}
We consider the problem of estimating small ball probabilities $\mathbb P\{f(G) \ls \delta \mathbb Ef(G)\}$ for sub-additive,
positively homogeneous functions $f$ with respect to the Gaussian measure. We establish estimates 
that depend on global parameters of the 
underlying function which take into account analytic and statistical measures, such as the variance and the $L^1$-norms of 
its partial derivatives. 
This leads to dimension-dependent bounds for small ball and lower small deviation estimates 
for seminorms when the linear structure is appropriately chosen to optimize the aforementioned parameters. 
Our bounds are best possible up to numerical constants. In all regimes, $\|G\|_\infty = \max_{ i \ls n}|g_i|$ 
arises as an extremal case in this study. 
The proofs exploit the convexity and hypercontractivity properties of the Gaussian measure. 
\end{abstract}

\maketitle

\tableofcontents

%%%%%%%%%%%%%
\section{Introduction}
%%%%%%%%%%%%%

The concentration of measure phenomenon is one of the most important concepts in high-dimensional 
probability and is an indispensable tool in the study of high-dimensional structures that arise in theoretical and 
applied fields. In its most simple form it can be stated as follows: functions depending smoothly on many independent variables have
small fluctuations. More specifically, in Gauss' space, asserts that any function $f$ on $\mathbb R^n$ 
which is $K$-Lipschitz is almost constant with overwhelming probability:
\begin{equation} \label{eq:conc-L}
\max\big( \Prob  \{ f (G) \ls \Exp f(G) - t K \} , \Prob \{ f (G)\gr \Exp f(G) + t  K\} \big) \ls \exp( - t^2 /2 ) , \quad t>0,
\end{equation} where $G$ is the standard Gaussian vector in $\mathbb R^n$, see e.g. \cite{Pis-pmb, Mau-t}. 
This probabilistic phenomenon is usually addressed as consequence of the solution to the Gaussian isoperimetric 
problem, which was solved independently by Sudakov and Tsirel'son \cite{ST} and by Borell \cite{Borell}.
The Gaussian isoperimetric inequality asserts that among all Borel sets in $\mathbb R^n$ of a given
Gaussian measure, the half-spaces have the smallest
Gaussian surface area. 

However, isoperimetry is not the only reason for the concentration phenomenon; 
convex functions are known to share the above property even when they are not Lipschitz. 
In this note we investigate concentration phenomena that appear due to convexity rather than isoperimetry. 
Having said that let us comment on the symmetric nature of \eqref{eq:conc-L}. 
If one establishes the one inequality the other one follows easily by applying it to $-f$, 
since $-f$ is also Lipschitz. This is not of course true in the case of convex functions. 
So we focus only on the “one-sided'' inequalities.

In the present work we study the phenomenon in the ``lower deviation” regime, 
that is $\mathbb P\{ f(G) \ls \delta \mathbb Ef(G)\}, \; 0<\delta<1$, for sub-additive and positively homogeneous functions $f$. Further,
this contains two other important regimes (which we distinguish because frequently they exhibit different behaviors), namely 
the ``small ball regime" when $0<\delta<1/2$, and the (lower) ``small deviation regime'' when $1/2< \delta<1$.
An important special case of the aforementioned functions are the seminorms which are also Lipschitz. 
In the light of \eqref{eq:conc-L}, one gets lower deviation estimates for such functions 
in terms of the Lipschitz constant. However, several key examples show that in the lower deviation 
regime better estimates may hold. A typical example is the $\ell_\infty$-norm which satisfies the following estimate (see e.g., \cite{KV}): 
\begin{align} \label{eq:d-cube}
\exp(-Cn^{1-c\delta^2} )<
\mathbb P\left\{ \|G\|_\infty < \delta \mathbb E\|G\|_\infty \right\} <  \exp(-cn^{1-C\delta^2}), \quad \delta\in \left( \frac{1}{\sqrt{\log n}}, \frac{1}{2} \right),
\end{align} whereas a standard application of the Gaussian concentration \eqref{eq:conc-L} would only yield
\[
\mathbb P\left\{ \|G\|_\infty < \delta \mathbb E\|G\|_\infty \right\} <  \exp(-c\log n), \quad \delta\in (0, 1/2).
\] where $c,C>0$ are universal constants.\footnote{We shall make frequent use of the letters $c,C,c_1,\ldots$ throughout the text 
for universal constants whose value may change from line to line. We also use the (standard) asymptotic notation: 
For any two quantities $Q_1,Q_2$ we write $Q_1\lesssim Q_2$ if there exists universal constant $c>0$ such that 
$Q_1\ls cQ_2$. We also  write $Q_2\gtrsim Q_1$ if $Q_1\lesssim Q_2$. Finally, we write $Q_1\asymp Q_2$ if
$Q_1\lesssim Q_2$ and $Q_2\lesssim Q_1$.}

A main goal of this work is to uncover the probabilistic principles which are responsible for this diverse behavior, within
a general context, by determining proper parameters that govern the estimates. On this direction 
our first main result reads as follows:

\begin{theorem} \label{thm:main-1}
	Let $f:\mathbb R^n \to \mathbb [0,\infty)$ be a positively homogeneous\footnote{A function $f: \mathbb R^n\to \mathbb R$ is said to be positively 
	homogeneous if $f(\lambda x)=\lambda f(x)$ for all $\lambda>0$ and $x\in\mathbb R^n$.},
	convex map. Suppose that for some $L>0$ we have
	\begin{align*} 
			\sum_{j=1}^n \left( \mathbb E|\partial_j f(G)|\right)^2
			\ls L (\mathbb Ef(G))^2. 
	\end{align*} Then, we have the following:
	\begin{enumerate}
	
		\item If $\|\nabla f(G)\|_2 \in L^2$ and $\tilde \beta= \mathbb E\|\nabla f(G)\|_2^2/ (\mathbb E f(G))^2$, then
		for any $\delta \in (0,1/2)$ we have
			\begin{align} \label{eq:main-1-1}
				\mathbb P \left\{ f(G) \ls \delta \mathbb E[f(G)] \right\} \ls 
				\exp\left\{ -c\delta^2 \left( \frac{1}{\tilde \beta}\right )^{\tau(\delta)} \left( \frac{1}{L}\right)^{1-\tau(\delta)} \right\},
			\end{align} where $\tau(\delta) \asymp \delta^2$.
				
		\item If $f(G)\in L^2$ and $\beta = {\rm Var}[f(G)] / (\mathbb E f(G))^2$, then for any $\delta\in (0,1/2)$ we have
			\begin{align} \label{eq:main-1-2}
				\mathbb P \left\{ f(G) \ls \delta \mathbb E[f(G)] \right\} \ls 
				\exp\left\{ -c\delta^2 \left( \frac{1}{ \beta}\right )^{\omega (\delta)} \left( \frac{1}{L}\right)^{1-\omega (\delta)} \right\},
			\end{align} where $\omega (\delta) \asymp \delta$.
	\end{enumerate} 
\end{theorem} 

Let us comment on the above result. Note that under homogeneity, 
the sub-additivity and the convexity are equivalent notions, thus the functions under discussion
are locally Lipschitz and the partial derivatives exist a.e. by Rademacher’s theorem. Some more remarks 
are in order. The parameter $L$ depends on the choice of the orthonormal basis with respect to which 
the partial derivatives are considered. Also, let us emphasize that 
the assertion \eqref{eq:main-1-1} provides better dependence with respect to $\delta$ compared to 
\eqref{eq:main-1-2}, however the parameter $\tilde \beta$ is smaller than $\beta$. In particular, 
we have the following inequalities:
\[
\beta \ls \tilde \beta \ls 2\pi.
\]
The estimate $\beta \ls \tilde \beta$ follows from the Gaussian Poincar\'e inequality \cite{Chen} 
which asserts that any absolutely continuous function $f$ 
satisfies
\begin{align} \label{eq:Poin}
	{\rm Var}[f(G)] \ls \mathbb E \|\nabla f(G)\|_2^2.
\end{align} The estimate $\tilde \beta \ls 2 \pi$ exploits the convexity and the sub-additivity. Indeed; due to these facts we have
\[
f(x) \gr f(x+y)-f(y) \gr \langle \nabla f(y) ,x\rangle,
\] for almost every $y$ and all $x$ in $\mathbb R^n$. Hence, we may write
\[
\mathbb E \|\nabla f(G)\|_2^2 = \frac{\pi}{2} \mathbb E_G \left( \mathbb E_Z |\langle \nabla f(G),Z\rangle| \right)^2 \ls 
\frac{\pi}{2}\mathbb E_G \left( 2 \mathbb Ef(Z) \right)^2 = 2\pi (\mathbb Ef(Z))^2,
\] where $G,Z$ are independent standard Gaussian vectors. 

The reason for 
the additional statement \eqref{eq:main-1-2} is due to the fact that $\beta$ can be significantly smaller than
$\tilde \beta$, i.e., up to a factor that depends on the dimension of the ambient space. In order to illustrate
that let us revisit the case of the $\ell_\infty$ norm. It is known (see e.g., \cite{Cha}) that 
\begin{align}
	{\rm Var}[ \|G\|_\infty ] \asymp \frac{1}{\log n}, \quad \| \nabla \|G\|_\infty \|_2 =1, \quad \mathbb E \| G \|_\infty \asymp \sqrt{\log n},
\end{align} hence $\beta(\|\cdot\|_\infty) / \tilde \beta (\|\cdot\|_\infty) \asymp (\log n)^{-1}$.
Following Chatterjee \cite{Cha} the function $x\mapsto \| x \|_\infty$ is {\it superconcentrated} since
\[
{\rm Var}[\|G\|_\infty] \ll \mathbb E\|\nabla \|G\|_\infty \|_2^2. 
\]
That said \eqref{eq:main-1-2} becomes significant for ``superconcentrated" functions $f$
(see section 2 for further details and precise definitions). However, in this specific example the superconcentration
phenomenon is rather poor (in fact \eqref{eq:main-1-2} is worse than \eqref{eq:main-1-1} for all $\delta$) to highlight the usefulness of bound \eqref{eq:main-1-2}. 
Note that, in general, we have
\[
\left( \frac{1}{\beta} \right)^{\omega(\delta)} \left(\frac{1}{L}\right)^{1- \omega(\delta)} \gg  \left( \frac{1}{\tilde \beta}\right)^{\tau(\delta)} \left( \frac{1}{L} \right)^{1- \tau(\delta)} \quad 
\Longleftrightarrow  
\quad \frac{1}{s(f)} \gg R(f)^{1-\frac{\tau(\delta)}{\omega(\delta)}},
\] where $R(f)$ and $s(f)$ abbreviate the following quantities:
\[
R(f):= \frac{ \sum_{j=1}^{n} \| \partial_{j} f \|_{L^2}^{2} } { \sum_{j=1}^{n}  \| \partial_{j} f \|_{L^1}^{2}} = \frac{\tilde \beta}{L}, \quad 
s(f): = \frac{{\rm Var}[f(G)]}{\mathbb E\|\nabla f(G)\|_2^2} = \frac{\beta}{\tilde \beta}. 
\]
Now the weakness of \eqref{eq:main-1-2} is explained from the estimates $R(\|\cdot\|_\infty)=n$ and $s(\|\cdot\|_\infty) \asymp (\log n)^{-1}$.

Another, more striking, example of superconcentration is the $\|\cdot\|_{\rm op}$ 
for Gaussian matrices which exhibits polynomial
(with respect to the dimension) gap in the Poincar\'e inequality. On this particular 
example one can check that the bound in small ball probability provided by \eqref{eq:main-1-2} is significantly smaller 
than the bound in \eqref{eq:main-1-1} for {\it all} $\delta \in ( 0,c_0)$ for some sufficiently small absolute constant $c_0>0$. 
Indeed, let $f$ be defined on $\mathbb R^{n^{2}}$ by $f(G)=\|G\|_{\rm op}$, 
where $ G=(g_{ij})$ is an $n\times n $ Gaussian matrix (i.e., its entries are i.i.d. standard Gaussian random variables).
Then, $f$ is non-negative, convex, and positively homogeneous (in fact $f$ is a norm). 
In particular, $f(G)^{2}=\|G\|_{\rm op}^2= \lambda_{\max} (G^{\ast} G)$, 
where $ \lambda_{\max}$ stands for the largest eigenvalue. 
Let $u=u_G$ be the eigenvector (of Euclidean norm $1$) that corresponds to the $\lambda_{\max}(G^\ast G)$. 
Clearly, 
\[ G^{\ast} G u = \lambda_{\max} u = f(G)^2 u \quad 
 \Longrightarrow  
\quad f^{2} (G) = \langle G^{\ast} G u, u \rangle = \sum_{i=1}^{n} \left( \sum_{j=1}^{n} g_{ij} \langle u, e_{j}\rangle \right)^{2}.
\]
Differentiation with respect to $g_{ij}$ yields $f(G) \partial_{ij} f(G) =  \langle u, e_{j} \rangle \langle Gu , e_{i} \rangle$, which in turn implies 
that 
\[
\sum_{i,j=1}^n|  \partial_{ij} f(G)|^{2} = 1, \quad  \textrm {a.s.}
\] 
Moreover, $u$ is uniformly distributed on $S^{n-1}$ and 
$G\stackrel{d} = UG$ for any orthogonal transformation $U$ 
(these assertions follow from the invariances of the Gaussian matrix). Hence, one may infer that 
\[
\mathbb E|\partial_{ij}f(G)| = \mathbb E \left[ \frac{|\langle u,e_j\rangle|}{f(G)} |\langle Gu, e_i\rangle| \right] = (\mathbb E|\langle u,e_j\rangle )|^2 \asymp \frac{1}{n}.
\] It follows that $R(\|\cdot\|_{\rm op})\asymp 1$. On the other hand,
known estimates on the variance of the largest eigenvalue (see e.g. \cite{LedRid}) yield 
\[ 
{\rm Var} [f(G)^2] ={\rm Var}[\lambda_{\max}(G^\ast G)] \ls Cn^{2/3} \quad \Longrightarrow \quad {\rm Var}[f(G)]\ls C'n^{-1/3}  \quad \Longrightarrow \quad s(\|\cdot\|_{\rm op})\ls C'n^{-1/3},
\] where we have used the fact that $\mathbb E\|G\|_{\rm op}\asymp \sqrt{n}$ and 
the inequality $(\mathbb E\xi)^2 {\rm Var}(\xi) \ls {\rm Var}(\xi^2)$ which is valid for any non-negative random variable $\xi$. This proves our claim. 

The above discussion shows that the bounds \eqref{eq:main-1-1} and \eqref{eq:main-1-2} are incomparable, hence one may ask if 
\eqref{eq:main-1-2} can be proved with the stronger
\footnote{In this case, of course, \eqref{eq:main-1-1} becomes redundant.} 
dependence $\delta^2$. It turns out that this interesting question is closely related to the optimal constant
in Talagrand's $L^1-L^2$ bound \cite{Tal-russo}; we elaborate further on this in Section 2. 

Talagrand's inequality is vital for our analysis as it provides the mean for quantifying the superconcentration 
phenomenon. This also demystifies the appearance of the $\|\partial_i f\|_{L^1}$ in our bounds. Further tools
exploited are: the hypercontractivity of the Ornstein-Uhlenbeck semigroup $(P_t)_{t\gr 0}$, the fact 
that convexity is preserved under the action of $(P_t)$, and deviation inequalities that are available
for convex functions and are formally stronger than \eqref{eq:conc-L}, see e.g. \cite{PV-sdi}. 
The argument can be roughly described as follows: First, 
smoothening via the OU-semigroup endows the function with stronger concentration properties, 
e.g., ${\rm Var}[P_tf(G)]$ decays exponentially fast with time. 
In addition, $P_tf$ well approximates $f$ and interpolates between $f=P_0f$ and 
$\mathbb E[f(G)]=P_\infty f$ (due to ergodicity), while the expectation remains unaltered during the ensuing motion, 
i.e. $\mathbb EP_t f(G) =\mathbb Ef(G)$, for all $t\gr 0$. This allows for replacing $f$ by $P_tf$ and 
reduce the problem of estimating $\mathbb P\{f(G) \ls \delta \mathbb Ef(G)\}$ to a
deviation for $P_tf$. Next, the application of the aforementioned (stronger) deviation
estimate for convex functions is not loose in the small deviation regime for $P_tf$, after specific time $t$. 
In other words, $P_t$ ``lifts" $f$ in order to ``slide"
the small ball regime of $f$ to the lower small deviation regime of $P_tf$. Roughly speaking, we 
arrive at an inequality of the form
\[
\mathbb P\{f(G) < \delta \mathbb Ef(G)\} \ls \mathbb P\{P_tf(G) <\varepsilon(t,\delta) \mathbb EP_tf(G)\},
\] for which one would like to choose $t$ as large as possible, when $P_tf$ has almost no deviation. The admissible 
range for $\varepsilon(t,\delta)$ determines the specific time $t$ that has to be chosen with respect to $\delta$ in order to obtain the desired
estimate.

\medskip

Being interested in dimension-dependent bounds we have to choose the linear structure
appropriately in order to optimize the order of magnitude of the parameters (in terms of the dimension) that
govern our probabilities. To this end, we consider three positions (linear images of the norm) 
which will serve for this purpose. The first two are well known in the theory of Banach spaces \cite{Pis-book}, while the latter is 
dictated by the bounds on hand and seems to be relatively new. It was first introduced in this context in \cite{PV-dicho}
for settling the problem of optimal dependence in the randomized Dvoretzky theorem.
Recall that a norm $f$ in $\mathbb R^n$ is said to be in {\it $\ell$-position} if it satisfies
\[
\mathbb E[ \langle G,e_i \rangle \partial_if(G) f(G) ]= \frac{\mathbb E(f(G))^2}{n}, \quad i=1,2,\ldots,n. 
\] 
A variant of this position is the so-called {\it position of minimal $M$} which is described by a similar 
balancing condition
\[
\mathbb E[ \langle G,e_i \rangle \partial_if(G) ]= \frac{\mathbb Ef (G) }{n}, \quad i=1,2,\ldots,n.
\]
We refer the reader to \cite{GM} for a detailed discussion on the isotropic conditions of these
positions. Finally, a position, which can be naturally addressed in view of our parameter $L$, is the following:
We will say that a norm $f$ in $\mathbb R^n$ satisfies the {\it $w^{1,p}$--condition} 
\footnote{We name the position after the standard notation is used in Sobolev norms and spaces.} if
\[
\mathbb E |\partial_i f(G)|^p = \mathbb E |\partial_j f(G)|^p, \quad i,j=1,\ldots,n,
\] for $p>0$ (of particular interest for us is the case $p=1$). 

Another important structural notion which will be crucial in our arguments 
is the {\it unconditionality}. A norm $\|\cdot\|$ in $\mathbb R^n$ 
is said to be $1$--unconditional, if it satisfies $\|\sum_{i=1}^n \varepsilon_i \alpha_i e_i\| = \|\sum_{i=1}^n \alpha_i e_i\|$ for all 
scalars $(\alpha_i) \subset \mathbb R$ and any choice of signs $\varepsilon_i=\pm 1$. 

With this terminology and by employing Theorem \ref{thm:main-1} we prove our second main result:

\begin{theorem}[Small ball estimates for norms]\label{thm:main-2}
Let $\|\cdot \| $ be a norm in $\R^{n}$.
\begin{enumerate}
\item If $\|\cdot\|$ is $1$--unconditional, in the $M$--position, or in the $\ell$--position, or $w^{1,1}$--position, then for any $ \delta\in ( 0,1/2]$ we have
\begin{equation} \label{eq:main-2-1}
\Prob \left\{ \| G \| \ls \delta \Exp \| G \| \right\} \ls 2\exp \left( - cn^{1- C \delta^2} \right) , 
\end{equation}
where $C,c>0$ are universal constants. 
\item If $\|\cdot\|$ is an arbitrary norm in $\mathbb R^n$, then there exists a linear invertible map $T$ such that for every 
$ \delta\in ( 0,1/2]$ we have
\begin{equation} \label{eq:main-2-2}
\Prob \left\{ \| TG \| \ls \delta \Exp \| TG \| \right\} \ls 2\exp \left( - cn^{1/4 - C \delta^2} \right) , 
\end{equation} 
where $C,c>0$ are universal constants. 
\end{enumerate}
\end{theorem}

Let us mention that the aforementioned result in the 1-unconditional case is sharp (up to universal constants) as
the example of the $\ell_\infty$-norm shows; see \eqref{eq:d-cube}. In \eqref{eq:main-2-2} 
the existence of the linear map $T$ is established by employing topological methods (Borsuk--Ulam theorem) 
and combinatorial tools based on a dichotomy that exploits a fundamental theorem of Alon and Milman \cite{AM}.
This technique has been developed in \cite{PV-dicho} and plays 
significant role in the present work, too. Unlike to the unconditional case, the exponent $1/4$ that we obtain is 
probably far from being optimal. However any previous known result to us had only polynomial 
dependence on the dimension in the probability \eqref{eq:main-2-2}. 
To the best of our knowledge this is the first result that establishes small ball probabilities for norms 
(in some position) with exponential (with respect to the dimension) decay. 
Let us emphasize that the corresponding large deviation inequality 
$\mathbb P \{ \| T G\| \gr (1+t) \mathbb E \|TG\| \} $ can hold only with polynomial dependence $n^{-c\max\{t,t^2\}}$ 
as the example of $\| \cdot \|_{\infty} $ shows; see \cite{Sch-cube,PV-dicho} for details.

A variant of the method we develop to attack the problem in the small ball regime
enables us to encounter its small deviation counterpart as well. To this end, 
the smoothening via the Ornstein-Uhlenbeck semigroup is still involved but the main difference now is that instead of applying 
$P_t$ directly to the norm $f$ we apply it to an appropriately chosen norming set of $f$. 
We perform a step-by-step procedure (algorithm) based on smoothening and elimination of spiky parts of the norm, which
requires at most polynomial number of steps, and allows to deform the original norm 
to one which almost preserves the mean and enjoys better variance bounds. The final stage of this procedure
is completed by an application of the variance-sensitive deviation inequality for convex function from \cite{PV-sdi}.
Following the aforementioned scheme we are able to settle the problem of small deviation for norms in full generality. 
Namely, we prove the following:

\begin{theorem}[Small deviations for norms]\label{thm:main-3}
Let $\|\cdot \| $ be a norm in $\mathbb R^n$.
	\begin{enumerate}
	\item If $\|\cdot\|$ is $1$-unconditional, in the $M$--position, or in the $\ell$--position, or $w^{1,1}$--position, then for
	any $ \varepsilon\in ( 0,1/2)$ we have
		\begin{equation} \label{eq:main-3-1}
		\Prob \left\{ \| G \| \ls (1-\varepsilon) \Exp \| G \| \right\} \ls 3\exp \left( - n^{c\varepsilon} \right) , 
		\end{equation}
	where $c>0$ is a universal constant. 
	\item In the general case, there exists an invertible linear map $T$ such that for any $\varepsilon\in (0,1/2)$ we have
		\begin{align} \label{eq:main-3-2}
		\mathbb P\{ \|TG\| \ls (1-\varepsilon) \mathbb E\|TG\| \} \ls 3\exp \left( -n^{c\varepsilon} \right),
		\end{align} where $c>0$ is a universal constant. 
	\end{enumerate}
\end{theorem}

At this point we want to stress the fact that the above estimates are optimal (up to constants) where the extremal
case occurs (again) for the $\|\cdot\|_\infty$. For the related estimate in the $\ell_\infty$ we refer the reader to \cite{Sch-cube}.

\medskip

Our initial motivation to study this type of questions stems from the attempt to gain better understanding
for the local Euclidean structure in high-dimensional normed spaces.
Notably, the first groundbreaking application of the concentration of measure in the asymptotic theory of normed
spaces is the seminal work of V. Milman \cite{Mil-dvo}
which provides a randomized version of the celebrated theorem of Dvoretzky \cite{Dvo} on almost spherical sections. Applying
\eqref{eq:conc-L} for $f$ being a norm $\|\cdot\|$ (and for $t=\varepsilon \mathbb E\|G \|, \; \varepsilon>0$) we obtain
\begin{align} \label{eq:conc-dvo}
\max\big( \Prob  \{ \|G\| \ls (1-\varepsilon) \Exp \|G\|  \} , \Prob \{ \|G\| \gr (1+\varepsilon) \Exp \|G\|\} \big) \ls \exp( - \tfrac{1}{2} \varepsilon^2 k) , \quad \varepsilon>0,
\end{align} where $k=k(\|\cdot\|):= (\mathbb E \|G\| /{\rm Lip}(\|\cdot \|))^2$ is usually referred to as the {\it critical dimension} or {\it Dvoretzky number} of $\|\cdot\|$.
Then, a standard net argument on the Euclidean sphere yields the existence of ``many" subspaces $E$ (in fact, the random with 
respect to the Haar measure on the Grassmannian) of dimension $m\asymp k(\|\cdot\|)$ that satisfy
\begin{align} \label{eq:dvo-2-sides}
\frac{c\mathbb E\|G\|}{\sqrt{n}} \|y\|_2 \ls \| y \| \ls \frac{C\mathbb E \|G\| }{\sqrt{n}} \|y\|_2, \quad y\in E.
\end{align} The latter applied for $\|\cdot\|_\infty$ and taking into account $k(\|\cdot\|_\infty)\asymp \log n$ yields that 
the $\ell_\infty^n = (\mathbb R^n, \|\cdot\|_\infty)$ admits $k$-dimensional subspaces, with $k \asymp \log n$, 
which are $C$-isomorphic to the Euclidean space $\ell_2^k = (\mathbb R^k , \|\cdot\|_2)$ and this is optimal as was 
shown in \cite{FLM}.

The randomized Dvoretzky theorem, as was put forth by V. Milman in \cite{Mil-dvo}, 
was also the motivation in the work of Klartag and Vershynin \cite{KV}, who established the 
following remarkable phenomenon: If one is interested only in the lower estimate of \eqref{eq:dvo-2-sides}
then subspaces of higher dimension may occur. Namely, they have shown that there exists a parameter $d=d(\|\cdot\|)$, which
is a priori at least as large as the Dvoretzky number, and has the following property: if $m\ls cd$ then for the random $m$-dimensional 
subspace $E$ one has
\begin{align} \label{eq:dvo-1-side}
\frac{c\mathbb E\|G\|}{\sqrt{n}} \|y\|_2 \ls \| y \| , \quad y\in E.
\end{align}
Quite remarkably the parameter $d(\|\cdot\|)$ is intimately connected to the exponential bound in the small ball probability
$\mathbb P\{ \|G\| \ls \delta \mathbb E\|G\|\}$. More precisely, Klartag and Vershynin associated with any norm $f$ the following
parameter:
\begin{align} \label{eq:KV-1}
d(f,\delta):= -\log  \mathbb P \left \{ f(G) \ls \delta {\rm med}(f(G)) \right \},  \quad \delta\in (0,1),
\end{align} where ${\rm med}(\cdot)$ stands for the median, and they proved the aforementioned
lower $\ell_2$-estimate for $d=d(\|\cdot \|, 1/2)$. Building on their work and employing Theorem \ref{thm:main-2} we
are able to give polynomial (as opposed to the previously known logarithmic) bounds of (almost) optimal order for the 
parameter $d(f,\delta)$. These estimates lead to the following one-sided randomized Dvoretzky theorem:

\begin{theorem} [lower $\ell_2$--estimates] \label{thm:main-4}
Let $\|\cdot\|$ be a norm in $\mathbb R^n$.
\begin{enumerate}
	\item If $\|\cdot\|$ is $1$-unconditional in the $\ell$-position or $w^{1,1}$--position, then 
	for any $\delta\in (0,1/2)$ and $k\ls cn^{1-C\delta^2}$, the random $k$-dimensional subspace $E$ of $\mathbb R^n$ satisfies
	\begin{align} \label{eq:main-4-1}
	\|y\| \gr \frac{c \delta^2 \mathbb E\|G\|}{\sqrt{n}} \|y\|_2, \quad y\in E ,
	\end{align} with probability greater than $1-e^{-cn^{1-C\delta^2}}$.
	
	\item In the general case, there exists an invertible linear map $T$ with the following property: for any
	$\delta\in (0,1/2)$ and $m \ls cn^{1/4-C\delta^2}$, the random $m$-dimensional subspace $F$ of $\mathbb R^n$ satisfies
	\begin{align} \label{eq:main-4-2}
	\|Ty\| \gr \frac{c \delta^2 \mathbb E\|TG\|}{\sqrt{n}} \|y\|_2, \quad y\in F,
	\end{align} with probability greater than $1-e^{-cn^{1/4-C\delta^2}}$, where $C,c>0$ are universal constants. 
\end{enumerate}
\end{theorem}

\medskip

The rest of the paper is organized as follows: In Section 2 we recall the main tools for Gaussian measure and lay them out carefully, 
in order to assemble them into the proof of the general lower deviation estimates, thus establishing 
Theorem \ref{thm:main-1}. We provide two variants
of the method:  Theorem \ref{thm:hyper-sb} and Theorem \ref{thm:super-sb}.

In Section 3 we discuss applications of the aforementioned lower deviations
in the framework of normed spaces, establishing Theorem~\ref{thm:main-2} (Theorem \ref{thm:1-unc}, 
Theorem \ref{p: optimal in M}, Theorem \ref{thm:main-2a}), and Theorem \ref{thm:main-4} (Theorem \ref{thm:main-4a}). 
Finally, in Section 4 we prove Theorem \ref{thm:main-3} (Theorem \ref{th: smdev gen} and Theorem \ref{th: smdev unc}). 

\medskip

{\bf Acknowledgment.}  Part of this  work was conducted while the authors 
were in residence at the Mathematical Sciences Research Institute in Berkeley, California, supported by NSF grant
DMS-1440140. The hospitality of MSRI and of the organizers of the program on
Geometric Functional Analysis is gratefully acknowledged.
The authors are also grateful to an anonymous referee whose comments improved the style of this exposition.

%%%%%%%%%%%%%%%%%%%%%%%%%%%%%%%
\section{Lower deviations with respect to global parameters}\label{s: Small-ball probability estimates}
%%%%%%%%%%%%%%%%%%%%%%%%%%%%%%%

In this section we introduce a general method (see \S2.4) for proving small ball probability estimates for convex, 
positively homogeneous functions. We provide two variants of the method. 
The first uses the hypercontractivity property of the Gaussian measure;
the second is based on the superconcentration phenomenon, whereas
both of them depend heavily on the convexity properties of the underlying function. 
The convexity ensures that the function deviates less below the median and this 
becomes even more drastic when the function is proportionally smaller than its mean. 
We begin with the background material on the Gaussian tools that we will need for proving the aforementioned results.
Let us emphasize that all functions considered in the sequel are convex, therefore locally Lipschitz. 
Hence, by Rademacher theorem they are differentiable a.e. Therefore, the quantities
$\mathbb E\| \nabla f(G)\|_2^2$, $\mathbb E|\partial_i f(G)|, \ldots,$ are meaningful
without further smoothness assumption.

%%%%%%%%%%%%%%%%%%%%%%%%%%%%%%%%%%
\subsection{Gaussian deviation inequalities for convex functions} 
%%%%%%%%%%%%%%%%%%%%%%%%%%%%%%%%%%

For context let us recall the fact that for any convex function the deviation below the 
median is smaller than the deviation above the median with respect to the 
Gaussian measure $\gamma_n$:
\begin{align} \label{eq:skew}
\mathbb P \{ f(G) \ls {\rm med}( f) -t\} \ls \mathbb P\{ f(G)\gr {\rm med}(f) +t \}, \quad t>0, \quad G\sim N({\bf 0},I_n).
\end{align} A rigorous proof of this fact can be given by employing Ehrhard's inequality \cite{Ehr} (see \cite{V-tight} for the details). 
However, the inequality \eqref{eq:skew} 
does not give any information how this skew behavior can be quantified. To this end, there exist one-sided deviation 
inequalities which improve upon the classical 
deviation inequality \eqref{eq:conc-L}, when the function under consideration is additionally convex. 

An inequality of this type can be traced back to the works of Samson \cite{Sam} and 
Bobkov-G\"otze \cite{BG} (see also \cite[\S 5.2]{PV-var}). It essentially states that one may replace in the lower tail in \eqref{eq:conc-L} 
the Lipschitz constant of $f$ by $(\mathbb E \|\nabla f(G)\|_2^2)^{1/2}$. Note that for any Lipschitz function one has 
\begin{align} \label{eq:grad-lip}
\mathbb E\|\nabla f(G)\|_2^2 \ls {\rm Lip}(f)^2.
\end{align} 
The precise statement is the following:

\begin{theorem}[isoperimetry+convexity] \label{thm:sdh}
For any convex map $f:\mathbb R^n \to \mathbb R$ with $\|\nabla f\|_2\in L^2(\gamma_n)$, we have
\begin{align} \label{eq:sdh}
\mathbb P \left\{  f(G) \ls \mathbb Ef(G)  -t \left(\mathbb E\|\nabla f(G)\|_2^2 \right)^{1/2}  \right\} \ls e^{-t^2}, \quad t>0,
\end{align} where $G$ is the standard Gaussian vector in $\mathbb R^n$.
\end{theorem}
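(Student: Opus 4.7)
The plan is to establish the sub-Gaussian Laplace bound
\begin{equation*}
\Exp\bigl[e^{-s(f(G)-\Exp f(G))}\bigr]\ls e^{s^2\sigma^2/2},\qquad s\gr 0,
\end{equation*}
where $\sigma^2:=\Exp\|\nabla f(G)\|_2^2$, and then apply Markov's inequality with the optimal choice $s=t/\sigma$ to recover the sub-Gaussian lower tail of \eqref{eq:sdh}.

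For the Laplace bound I would work with $\psi(s):=\log\Exp[e^{-sf(G)}]+s\Exp f(G)$. Since $\psi(0)=\psi'(0)=0$, it suffices to prove $\psi''(s)\ls \sigma^2$ for every $s\gr 0$. A direct computation gives $\psi''(s)=\Var_{\nu_s}[f]$, where $d\nu_s\propto e^{-sf}\,d\gamma_n$ is the tilted probability measure on $\R^n$. Convexity of $f$ together with $s\gr 0$ makes $\nu_s$ log-concave with log-density Hessian $\succeq I$, so the Brascamp--Lieb variance inequality yields
\begin{equation*}
\Var_{\nu_s}[f]\;\ls\;\mathbb E_{\nu_s}\bigl\langle\nabla f,\,(I+s\,\mathrm{Hess}\,f)^{-1}\nabla f\bigr\rangle\;\ls\;\mathbb E_{\nu_s}\|\nabla f\|_2^2.
\end{equation*}

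The remaining step is the comparison $\mathbb E_{\nu_s}\|\nabla f\|_2^2\ls \sigma^2$, and this is where I expect the main difficulty to lie. Because $\|\nabla f\|_2^2$ need not be convex or even monotone, Caffarelli's $1$-Lipschitz transport $T_s\colon(\R^n,\gamma_n)\to(\R^n,\nu_s)$ does not deliver the inequality out of the box; one is led to $\tfrac{d}{ds}\mathbb E_{\nu_s}\|\nabla f\|_2^2 = -\mathrm{Cov}_{\nu_s}(\|\nabla f\|_2^2,f)$, so the bound reduces to the correlation inequality $\mathrm{Cov}_{\nu_s}(\|\nabla f\|_2^2,f)\gr 0$, which has to be extracted from the convexity of $f$ by examining the joint law of $(f(G),\nabla f(G))$. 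An equivalent route is Ornstein--Uhlenbeck semigroup interpolation: writing $\phi(t):=\log\Exp[e^{-sP_tf(G)}]$, using Gaussian integration by parts to obtain $\phi'(t)=-s^2\Exp[\|\nabla P_tf\|_2^2e^{-sP_tf}]/\Exp[e^{-sP_tf}]$, and invoking the Mehler-plus-Jensen contraction $\|\nabla P_tf\|_2^2\ls e^{-2t}P_t(\|\nabla f\|_2^2)$ repackages the same correlation obstacle together with an exponential time factor that restores the correct normalization $s^2\sigma^2/2$ after integration in $t$.
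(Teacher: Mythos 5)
The paper does not actually prove Theorem~\ref{thm:sdh}; it quotes it from Samson and Bobkov--G\"otze (see also \cite[\S 5.2]{PV-var}), so your attempt has to stand on its own. It does not: the step you yourself flag as ``the main difficulty'' --- the comparison $\mathbb E_{\nu_s}\|\nabla f\|_2^2\ls\sigma^2$, equivalently $\mathrm{Cov}_{\nu_s}(\|\nabla f\|_2^2,f)\gr 0$ for all $s\gr 0$ --- is precisely the crux, and nothing in your write-up establishes it. Convexity gives no pointwise monotone link between $f$ and $\|\nabla f\|_2^2$ (for a norm, $f$ is $1$--homogeneous while $\|\nabla f\|_2$ is $0$--homogeneous, so the two are constant on transversal families of sets), and neither Caffarelli's contraction nor the OU interpolation you mention resolves the sign of this covariance; you acknowledge they ``repackage the same obstacle.'' Since $\psi(0)=\psi'(0)=0$ and Brascamp--Lieb only yield $\psi''(s)\ls\mathbb E_{\nu_s}\|\nabla f\|_2^2$, the Laplace bound, and hence the theorem, is not proved. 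The standard argument bypasses the tilted measure entirely: let $A=\{f\ls\mathbb Ef(G)-t\sigma\}$, let $\gamma_A$ be the normalized restriction of $\gamma_n$ to $A$, so that $H(\gamma_A|\gamma_n)=\log(1/\gamma_n(A))$, and let $(X,Y)$ be an optimal coupling of $\gamma_A$ and $\gamma_n$. Convexity gives $f(Y)-f(X)\ls\langle\nabla f(Y),Y-X\rangle$, and Cauchy--Schwarz plus Talagrand's transportation inequality give
\begin{align*}
t\sigma\ls\mathbb Ef(G)-\mathbb E_{\gamma_A}f\ls\sigma\,W_2(\gamma_A,\gamma_n)\ls\sigma\sqrt{2\log(1/\gamma_n(A))},
\end{align*}
i.e.\ $\gamma_n(A)\ls e^{-t^2/2}$, with no correlation inequality needed.

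Separately, your numerology does not match the statement: optimizing $\exp(s^2\sigma^2/2-st\sigma)$ at $s=t/\sigma$ gives $e^{-t^2/2}$, not $e^{-t^2}$. You should not try to repair this, because the exponent $e^{-t^2}$ as printed cannot be correct: for $f(x)=x_1$ one has $\mathbb Ef(G)=0$, $\mathbb E\|\nabla f(G)\|_2^2=1$, and the left-hand side of \eqref{eq:sdh} is $\Phi(-t)\sim(t\sqrt{2\pi})^{-1}e^{-t^2/2}$, which exceeds $e^{-t^2}$ for all large $t$. The bound should be read as $e^{-t^2/2}$ (consistent with the cited sources and with the way Theorem~\ref{thm:sdi} carries an explicit constant in the exponent), and that is exactly what the transportation argument above delivers. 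So: right target for the Laplace transform, correct use of Brascamp--Lieb, but the proof is incomplete at its essential step and the final Chernoff step would not give the stated constant in any case.
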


Let us mention that this inequality is obtained by melting together the Gaussian isoperimetry and the convexity properties of the 
function. In fact, it holds true for a much larger class of measures, e.g. for all measures satisfying a logarithmic Sobolev inequality. 
We refer the reader to \cite[Chapter 6]{Led-book} and the references therein for related notions.

Another inequality of this type was recently established in \cite{PV-sdi}. 
For convex functions, one may replace the $L^2$-norm of the gradient by 
the variance of the function. More precisely, we have the following:

\begin{theorem} [convexity+convexity] \label{thm:sdi}
Let $f$ be a convex map in $L^2(\gamma_n)$. Then, we have
\begin{align} \label{eq:sdi}
\mathbb P \left\{ f(G) \ls \mathbb Ef(G) - t \sqrt{{\rm Var} [f(G)]} \right \}  \ls e^{-t^2/100}, \quad t>0,
\end{align} where $G$ is the standard Gaussian vector in $\mathbb R^n$.
\end{theorem}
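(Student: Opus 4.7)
The plan is to derive the inequality from Ehrhard's inequality, which is the natural Gaussian strengthening of the Brunn--Minkowski inequality that already drives \eqref{eq:med-exp}. Since $f$ is convex the sub-level sets $\{f\ls a\}$ are convex, so applying Ehrhard's inequality to $A=\{f\ls a\}$ and $B=\{f\ls b\}$ and using convexity of $f$ on the left-hand side, I would show that
\[
\semi(t)\;:=\;\Phi^{-1}\bigl(\Prob\{f(G)\ls t\}\bigr)
\]
is concave and nondecreasing on the range of $f$. Equivalently, $f(G)$ has the same distribution as $g(Z)$, where $Z\sim N(0,1)$ and $g=\semi^{-1}$ is a convex nondecreasing function on $\R$. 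This one-dimensional representation is the decisive step: the $n$-dimensional small-deviation problem is reduced to a question about convex functions of a single standard Gaussian.

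Next I would bound the left slope of $g$ at $0$ by a constant multiple of the standard deviation. Writing $b:=g'_+(0)$ and $m:=g(0)$ (so $m$ is a median of $f(G)$), the convexity of $g$ gives $g(z)\gr m+bz$ for $z\gr 0$, and monotonicity gives $g(w)\ls m$ for $w\ls 0$. Plugging this into the symmetric variance identity
\[
\Var[g(Z)]\;=\;\tfrac12\int\!\!\int (g(z)-g(w))^{2}\,\varphi(z)\varphi(w)\,dz\,dw
\]
and restricting the domain to the quadrant $\{z>0>w\}$ yields a bound of the form $\sigma^{2}\gtrsim b^{2}$ with an explicit absolute constant. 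Hence $\semi'(m)=1/b\gtrsim 1/\sigma$.

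With the slope estimate in hand, I would close the argument by invoking concavity of $\semi$ at the median. Since $\semi(m)=0$, concavity gives $\semi(m-s)\ls -s\,\semi'(m)\ls -cs/\sigma$ for every $s>0$, and the standard Gaussian tail bound $\Phi(-x)\ls e^{-x^{2}/2}$ then produces
\[
\Prob\{f(G)\ls m-s\}\;\ls\;\exp\!\left(-\tfrac{c^{2}s^{2}}{2\sigma^{2}}\right).
\]
Finally I would pass from median to mean using $\Exp f(G)\gr \text{med}(f(G))$ (Kwapien) and the Chebyshev-type gap $\Exp f(G)-\text{med}(f(G))\lesssim \sigma$: for $t$ larger than a fixed threshold, $t\sigma$ exceeds this gap by a fraction of itself and the Gaussian tail bound carries through with a slightly worse constant; for smaller $t$ the claim is vacuous since $e^{-t^{2}/100}$ stays above a fixed constant that dominates the probability. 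Taking the numerology into account through all three ingredients (variance/slope, median-to-mean, and the Gaussian bound) produces the stated constant $1/100$.

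The main obstacle I anticipate is the joint optimization of constants: the qualitative statement is quite robust, but each of the three steps -- the quadrant lower bound for $\sigma^{2}$, the controlled loss in passing from median to mean, and the final Mills-ratio estimate -- contributes a multiplicative factor, and these need to be tracked carefully to land at a clean universal constant rather than a dimension-dependent one. A secondary technical point is that $\semi$ and $g$ need not be smooth or strictly monotone; this would be handled by working with left/right derivatives of the convex conjugate pair $(\semi,g)$ and an appropriate monotone rearrangement, rather than trying to invert $\semi$ pointwise.
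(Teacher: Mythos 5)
The paper does not actually prove Theorem \ref{thm:sdi}; it imports it from \cite{PV-sdi}, and your proposal reconstructs essentially the argument given there: Ehrhard's inequality applied to sublevel sets shows $\Phi^{-1}\circ F$ is concave (equivalently $f(G)\overset{d}{=}g(Z)$ with $g$ convex nondecreasing), the quadrant decomposition of the variance gives $g'_+(0)\ls 2\sqrt{\Var f(G)}$, and concavity of $\Phi^{-1}\circ F$ at the median together with $\Phi(-x)\ls e^{-x^2/2}$ yields the sub-Gaussian tail below the median. The one imprecision is your treatment of small $t$: the claim is not vacuous there (both sides tend to $1$), but it follows from Cantelli's inequality, since $\Prob\{f(G)\ls \Exp f(G)-t\sigma\}\ls (1+t^2)^{-1}\ls e^{-t^2/100}$ in that range, while for $t$ above a fixed threshold the median-based bound combined with $\Exp f(G)-{\rm med}(f(G))\ls\sigma$ carries through as you describe.
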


Note that, in the light of the Gaussian Poincar\'e inequality \eqref{eq:Poin}
the estimate \eqref{eq:sdi} improves considerably upon \eqref{eq:sdh}. Moreover, there exist classical examples which indicate
that the variance can be dramatically smaller than $\mathbb E\|\nabla f(G)\|_2^2$ or ${\rm Lip}(f)^2$ 
(see \cite{PVZ}, \cite{LT}, \cite{V-tight}, \cite{Cha}). 
Let us emphasize the fact that, the aforementioned variance-sensitive inequality is strongly connected 
with the Gaussian convexity, via Ehrhard's inequality, and it is not known if holds true for other than Gaussian-like distributions, 
see \cite[Theorem 2.2]{PV-sdi}, \cite[Theorem 5.6]{PV-var} and \cite[\S 2.1.3]{V-tight}. 

In many problems, one is interested in studying the small deviation from the mean (or the median) of a positive convex function $f$, that is
\begin{align*}
\mathbb P \{ f(G) \ls (1-\varepsilon ) \mathbb E[f(G)] \}, \quad 0<\varepsilon<1.
\end{align*} This probability can be estimated by using either \eqref{eq:sdh} or \eqref{eq:sdi} to obtain
\begin{align}
\mathbb P \{ f(G) \ls (1-\varepsilon ) \mathbb E[f(G)] \}\ls \exp\left( -\varepsilon^2/ \tilde \beta(f) \right), \quad 0<\varepsilon<1, \quad  
\tilde \beta(f) :=\frac{\mathbb E\|\nabla f(G)\|_2^2}{(\mathbb E[f(G)])^2},
\end{align} in the first case, and 
\begin{align}
\mathbb P\{ f(G) \ls (1-\varepsilon ) \mathbb E[f(G)] \} \ls 2\exp\left( -c \varepsilon^2 /  \beta(f) \right), \quad 0<\varepsilon<1, \quad  
\beta(f) :=\frac{{\rm Var}[f(G)]}{(\mathbb E[f(G)] )^2},
\end{align} in the latter case. If $f$ is Lipschitz, by taking into account \eqref{eq:Poin} and \eqref{eq:grad-lip} we infer that 
\begin{align} \label{eq:beta-comp}
\frac{(\mathbb E[f(G)])^2}{{\rm Lip}(f)^2} = k(f) \ls \frac{1}{\tilde \beta (f)} \ls \frac{1}{\beta(f)},
\end{align} where $k(f)$ is the Dvoretzky number of $f$ and $\beta(f)$ is referred to as the normalized variance (see e.g. \cite{PV-sdi}). 
Furthermore, if $f$ is additionally a norm one may easily check (see e.g. \cite{PV-var}) that
\begin{align} \label{eq:beta-magni}
n \asymp k(\|\cdot\|_2) \gr k(f) \gr k( |\langle \cdot, \theta\rangle|) = 2/\pi , \quad \beta(f) \gr \beta(\|\cdot\|_2) \asymp 1/n,
\end{align} where $\theta$ is some (any) unit vector.

%%%%%%%%%%%%%%%%%%%%%%%%%%%%%%%%%%%%
\subsection{Talagrand's $L^1-L^2$ bound}
%%%%%%%%%%%%%%%%%%%%%%%%%%%%%%%%%%%%

As we have already reviewed in the Introduction there 
exist several key situations where ${\rm Var}[f(G)] \ll \mathbb E\|\nabla f(G)\|_2^2$ (e.g., $f(x)= \|x\|_\infty$).
Following Chatterjee \cite[Definition 3.1]{Cha}, we will say that an absolutely continuous 
function $f:\mathbb R^n\to \mathbb R$ is 
{\it $\varepsilon_n$-superconcentrated}, for some $\varepsilon_n \in (0,1)$, if we have
\begin{align}
{\rm Var}[f(G)] \ls \varepsilon_n \mathbb E\|\nabla f(G)\|_2^2.
\end{align}
In view of the above, we may define the superconcentration constant of $f$ (see also \cite{V-tight}) by
\begin{align}
s(f) := \frac{{\rm Var} [f(G)] }{\mathbb E\|\nabla f(G)\|_2^2}.
\end{align}

Although superconcentration occurs quite often, unfortunately, not many methods for establishing it are available.
Hence, it is of great importance to develop general approaches for quantifying this phenomenon efficiently. A way for proving 
superconcentration is via Talagrand's inequality \cite{Tal-russo}, 
which improves upon the classical Poincar\'e inequality \eqref{eq:Poin}.

\begin{theorem} [Talagrand] \label{thm:Tal-L1-L2} 
For any absolutely continuous function $f:\mathbb R^n \to \mathbb R$, we have
\begin{align} \label{eq:Tal-L1-L2}
{\rm Var}_{\gamma_n} (f) \ls C \sum_{i=1}^n \frac{\|\partial_i f\|_{L^2}^2}{ 1+ \log \left(\|\partial_i f\|_{L^2} / \|\partial_i f\|_{L^1} \right)},
\end{align} where $\partial_i f$ stands for the i-th partial derivative of $f$.
\end{theorem}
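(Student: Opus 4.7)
My plan is to prove \eqref{eq:Tal-L1-L2} via the Ornstein--Uhlenbeck semigroup $P_t$ together with Nelson's hypercontractivity inequality, which is the standard route.

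The first step is to represent the variance as a time integral along the semigroup. Since $P_0f=f$ and $P_\infty f=\Exp f$, one has
\begin{equation*}
\Var_{\gamma_n}(f) \;=\; -\int_0^\infty \frac{d}{dt}\,\Exp[(P_tf)^2]\,dt \;=\; 2\int_0^\infty \Exp\|\nabla P_tf\|_2^2\,dt.
\end{equation*}
Next, the commutation $\partial_i P_tf = e^{-t}P_t(\partial_i f)$ (immediate from Mehler's formula) gives
\begin{equation*}
\Var_{\gamma_n}(f) \;=\; 2\sum_{i=1}^n\int_0^\infty e^{-2t}\,\|P_t(\partial_i f)\|_{L_2}^2\,dt.
\end{equation*}
This is the analytic identity that allows us to convert hypercontractivity (a statement about $P_t$) into an improvement of the Poincar\'e inequality.

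The second step is to bound each integrand. Nelson's hypercontractivity asserts that $\|P_tg\|_{L_q}\ls\|g\|_{L_p}$ whenever $q-1\ls (p-1)e^{2t}$; specializing to $q=2$ with $p(t):=1+e^{-2t}\in[1,2]$, we obtain $\|P_t(\partial_i f)\|_{L_2}\ls \|\partial_i f\|_{L_{p(t)}}$. The log-convexity of $L_r$-norms (Littlewood's inequality) then interpolates between $L_1$ and $L_2$:
\begin{equation*}
\|\partial_i f\|_{L_{p(t)}} \;\ls\; \|\partial_i f\|_{L_1}^{\theta(t)}\,\|\partial_i f\|_{L_2}^{1-\theta(t)},\qquad \theta(t)=\frac{2-p(t)}{p(t)}=\tanh(t).
\end{equation*}
Setting $r_i:=\|\partial_i f\|_{L_2}/\|\partial_i f\|_{L_1}\gr 1$ and $L_i:=\log r_i$, combining the two bounds gives
\begin{equation*}
\|P_t(\partial_i f)\|_{L_2}^2 \;\ls\; \|\partial_i f\|_{L_2}^2 \cdot e^{-2L_i\tanh(t)}.
\end{equation*}

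The third step is a scalar calculation. We must show
\begin{equation*}
I(L_i):=\int_0^\infty e^{-2t}\,e^{-2L_i\tanh(t)}\,dt \;\ls\; \frac{1}{1+L_i}.
\end{equation*}
After the substitution $u=\tanh(t)$, which yields $e^{-2t}\,dt=du/(1+u)^2$, the integral becomes $I(L_i)=\int_0^1(1+u)^{-2}e^{-2L_iu}\,du$; for $L_i\gr 1$ splitting at $u=1/L_i$ gives $I(L_i)\ls 1/L_i$, and $I(L_i)\ls 1/2$ always, so $I(L_i)\lesssim 1/(1+L_i)$. Plugging back yields exactly \eqref{eq:Tal-L1-L2}. (Terms with $\|\partial_i f\|_{L_1}=0$ give $\partial_i f\equiv 0$ a.e.\ and contribute nothing, with the usual convention $0/\infty=0$.)

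The only delicate point is justifying the semigroup identity and the commutation relation in the stated generality of absolutely continuous $f$ with $\partial_i f\in L_2(\gamma_n)$; this is handled by first proving the inequality for $f$ smooth and bounded with bounded derivatives (where every step above is rigorous) and then extending by a standard density/truncation argument, using that both sides of \eqref{eq:Tal-L1-L2} are continuous under appropriate $L_2$-approximation of $\nabla f$. The conceptual core of the argument is really the hypercontractive gain $e^{-2L_i\tanh(t)}$, which turns the $L_2$-contraction (sufficient only for Poincar\'e) into a logarithmic improvement whenever $\partial_i f$ is far from being constant-signed on a large set.
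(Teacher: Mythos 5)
Your proof is correct, and it is exactly the hypercontractive argument of Benjamini--Kalai--Schramm/Cordero-Erausquin--Ledoux that the paper invokes for this theorem and whose ingredients it assembles itself (the variance representation $\Var f=2\int_0^\infty\Exp\|\nabla P_tf\|_2^2\,dt$ in Lemma~\ref{lem:ou-2}, the commutation and the interpolation bound $\|P_th\|_2\ls\|h\|_2(\|h\|_1/\|h\|_2)^{\tanh t}$ in Lemmas~\ref{lem:ou-1} and~\ref{lem:A-1}, and the integration step in the proof of Corollary~\ref{cor:bd-s-R}). All the computations check out, including the substitution $u=\tanh t$ giving $e^{-2t}\,dt=du/(1+u)^2$ and the resulting bound $I(L)\ls 1/(1+L)$.
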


In the study of the asymptotic theory of finite-dimensional normed spaces, Talagrand's inequality and 
the superconcentration phenomenon were put forward in \cite{PVZ}. The authors there, invoke \eqref{eq:Tal-L1-L2} 
to prove that the $\ell_p$-norms are superconcentrated for $p>\log n$ and to establish sharp concentration
inequalities for this range of $p$. Soon after, the inequality was used in \cite{Tik-unc} for proving
that every 1-unconditional norm, in $\ell$-position, has concentration at least as good as the $\ell_\infty$-norm. 
Subsequently, it was exploited in \cite{PV-dicho}, to show that every norm admits an invertible linear 
image with concentration at least as good as the $\ell_\infty$-norm. 

In the present work we will use Talagrand's inequality, 
to give an upper bound for the superconcentration constant, in the following form:

\begin{corollary} \label{cor:bd-s-R}
Let $f$ be a smooth function on $\mathbb R^n$. For the following parameters: 
\begin{align} \label{eq:R-f}
s(f) :=\frac{{\rm Var}[f(G)]}{\mathbb E \|\nabla f(G)\|_2^2} , \quad 
R(f) := \frac{ \mathbb E\|\nabla f(G)\|_2^2}{ \sum_{j=1}^n \|\partial_j f\|_{L^1}^2 }, 
\end{align} we have
\begin{align} \label{eq:bd-s-R-2}
\frac{2}{s(f)} \gr \alpha \log R(f),
\end{align}  where $\alpha>0$ is a universal constant.
\end{corollary}

This estimate follows from Talagrand's inequality with an appropriate application of Jensen's inequality 
(see e.g. \cite[Chapter 5]{Cha} for the details).
Since an explicit constant is required for our approach (see Remark \ref{rem:opt-Tal}), 
we provide a proof at the end of \S \ref{sec:s-sb} which yields $\alpha=1$. The proof is nothing
more than a repetition of the existing argument in \cite[Chapter 5]{Cha}, or \cite{BKS}, 
with a careful bookkeeping of the constants in each computational stage.

Let us point out that Talagrand \cite{Tal-russo} proved \eqref{eq:Tal-L1-L2} for the uniform probability measure on the Hamming cube. 
An alternative approach was presented by Benjamini, Kalai and 
Schramm in \cite{BKS}. Both approaches rest on the Bonami-Beckner hypercontractive inequality and therefore \eqref{eq:Tal-L1-L2} holds true for any
hypercontractive measure, see e.g. \cite{CorLed}. An explicit proof of the Gaussian version (Theorem \ref{thm:Tal-L1-L2}) can be found in \cite{CorLed} or 
\cite[Chapter 5]{Cha}. Our approach also depends on the hypercontractive property of the Gaussian measure. 
We recall some basic facts in the next paragraph.

%%%%%%%%%%%%%%%%%%%%%%%%
\subsection{Ornstein-Uhlenbeck semigroup}
%%%%%%%%%%%%%%%%%%%%%%%%

The hypercontractive property of the Gaussian measure can be expressed in terms of the associated 
Ornstein-Uhlenbeck semigroup (OU-semigroup). 
Let us first recall the definition. For any $f\in L^1(\gamma_n)$ we define 
\begin{align} \label{eq:Mehler}
P_tf(x) = \mathbb Ef \left(e^{-t}x+\sqrt{1-e^{-2t}} G\right ), \quad x\in \mathbb R^n, \quad t\gr 0,
\end{align} where $G$ is the standard Gaussian vector in $\mathbb R^n$. It is known that $P_t f$ is solution of the following heat equation:
\begin{align} \label{eq:heat-eqt}
	\left\{ \begin{array}{cc}
			\partial_t u = {\mathcal L} u \\ 
			u(x,0)=f 
				\end{array} \right. ,
\end{align} where $\mathcal L$ stands for the generator of the semigroup, that is ${\mathcal L} u = \Delta u - \langle x, \nabla u \rangle$ for
sufficiently smooth functions $u$. For background material on the OU-semigroup we refer the reader to \cite{BGL}.
In the next lemma we collect several properties of the semigroup $(P_t)_{t\gr 0}$ that will be useful in our approach.

\begin{lemma} \label{lem:ou-1} The Onrnstein-Uhlenbeck semigroup $(P_t)_{t\gr 0}$ enjoys the following properties:
\begin{enumerate}
\item $P_0f(x)=f(x)$ and $P_\infty f(x) =\lim_{t\to \infty} P_tf(x) =\mathbb E[f(G)]$, $x\in \mathbb R^n$.
\item If $f$ is smooth, then for each $i\ls n$ we have $\partial_i (P_tf)= e^{-t} P_t(\partial_i f)$ .
\item $\mathbb E P_tf(G) = \mathbb Ef(G)$.
\item If $f\gr 0$, then $P_tf\gr 0$.
\item For every $1\ls p\ls \infty$ we have that $P_t:L^p \to L^p$ is a linear contraction.
\item If $f$ is convex, then $P_tf$ is also convex.
\end{enumerate}
\end{lemma}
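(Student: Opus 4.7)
All six properties follow essentially by direct inspection of Mehler's formula \eqref{eq:Mehler}, so the plan is to verify each in turn, leaning on the elementary fact that if $G,G'$ are independent standard Gaussians in $\mathbb R^n$ and $a^2+b^2=1$, then $aG+bG'\sim N(\mathbf{0},I_n)$. This stability under orthogonal mixing is what drives (1)--(3).

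For (1), I would substitute $t=0$, where $e^{-t}=1$ and $\sqrt{1-e^{-2t}}=0$, so $P_0f(x)=\mathbb Ef(x)=f(x)$; and then $t\to\infty$, where $e^{-t}\to 0$ and $\sqrt{1-e^{-2t}}\to 1$, giving $P_tf(x)\to \mathbb Ef(G)$ by (say) dominated convergence for $f\in L_1(\gamma_n)$. For (2), I would differentiate under the integral in \eqref{eq:Mehler} (justified by standard domination if $f$ is smooth with controlled derivatives), obtaining $\partial_i P_tf(x)=e^{-t}\mathbb E(\partial_i f)(e^{-t}x+\sqrt{1-e^{-2t}}G)=e^{-t}P_t(\partial_i f)(x)$. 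For (3), since $G$ and $G'$ are independent standard Gaussians, $e^{-t}G+\sqrt{1-e^{-2t}}G'$ has the same distribution as a standard Gaussian, so Fubini gives
\[
\mathbb E P_tf(G)=\mathbb E_{G,G'}f\bigl(e^{-t}G+\sqrt{1-e^{-2t}}G'\bigr)=\mathbb E f(G).
\]

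Property (4) is immediate from Mehler's formula: an average of nonnegative quantities is nonnegative. For (5), the case $p=\infty$ is trivial (an average is bounded by the sup); for $1\leq p<\infty$ I would apply Jensen's inequality to the convex function $t\mapsto|t|^p$ to get $|P_tf(x)|^p\leq P_t(|f|^p)(x)$, integrate against $\gamma_n$, and then invoke (3) applied to $|f|^p$ to conclude $\|P_tf\|_{L_p}^p\leq\|f\|_{L_p}^p$. For (6), I would use the algebraic identity
\[
e^{-t}\bigl(\lambda x+(1-\lambda)y\bigr)+\sqrt{1-e^{-2t}}\,G=\lambda\bigl(e^{-t}x+\sqrt{1-e^{-2t}}\,G\bigr)+(1-\lambda)\bigl(e^{-t}y+\sqrt{1-e^{-2t}}\,G\bigr),
\]
apply convexity of $f$ pointwise in $G$, and take expectation; note that here the \emph{same} Gaussian $G$ appears on both sides, which is what makes the identity work (convexity would fail if we averaged against independent copies).

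None of the six items presents a genuine obstacle; the only points requiring minimal care are the differentiation under the integral in (2) (handled by a standard dominated-convergence argument, using that $\partial_i f$ has at worst polynomial growth for the functions of interest in this paper) and the mild integrability assumptions needed to make each statement meaningful. I would state the lemma under the blanket assumption that $f$ lies in whichever $L_p$ space is appropriate for the item in question, and note that the identities in (2) and (6) hold for all $t\geq 0$ by continuity once they are verified for $t>0$.
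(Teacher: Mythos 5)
Your proof is correct and follows essentially the same route as the paper, which verifies these properties directly from Mehler's formula \eqref{eq:Mehler} (and, for items (1)--(5), defers to standard references); in particular your argument for (6) — applying the affine identity inside the expectation with the \emph{same} Gaussian $G$ and using pointwise convexity of $f$ — is exactly the computation the paper writes out.
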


These properties can be easily verified from the definition of the OU-semigroup. Alternatively, proofs of these facts can be 
found, e.g., in \cite[Chapter 5]{Led-book} or \cite{BGL}. Let us emphasize that while the properties (1)-(5) are satisfied
by a fairly general class of Markov semigroups (see e.g. \cite{BGL}), property (6) depends crucially on the fact 
that the OU-semigroup admits the integral 
representation \eqref{eq:Mehler} with respect to the Mehler kernel. Indeed; for any $\lambda\in [0,1]$ and 
$x,y\in \mathbb R^n$ we may write
\begin{align*}
P_tf( (1-\lambda)x +\lambda y) &= \mathbb E \left[ f \left( e^{-t} \left( (1-\lambda)x +\lambda y \right) +\sqrt{1-e^{-2t}} G \right) \right] \\
& =  \mathbb E \left[ f \left( (1-\lambda) \left(e^{-t}x + \sqrt{1-e^{-2t}}G \right) + \lambda \left (e^{-t} y +\sqrt{1-e^{-2t}} G \right) \right) \right] \\
& \ls \mathbb E \left[ (1-\lambda) f \left( e^{-t}x + \sqrt{1-e^{-2t}} G \right) + \lambda  f \left (e^{-t} y +\sqrt{1-e^{-2t}} G \right) \right] \\
&= (1-\lambda) P_tf(x) + \lambda P_tf(y),
\end{align*} where we have used the convexity of $f$ pointwise.

The fact that $P_t$ is a linear contraction admits an improvement by, roughly
speaking, relaxing the integrability assumption in the domain.
This important property of the Ornstein-Uhlenbeck semigroup called hypercontractivity is due to Nelson \cite{Nel}. We have the 
following:

\begin{theorem} [Nelson] \label{thm:ou-hyper}
Let $f:\mathbb R^n\to \mathbb R$ be a function and let $1<p \ls q$. Then, for any $t\gr 0$ with $q \ls 1+ e^{2t}(p-1)$, we have
\begin{align}
\|P_tf\|_{L^q(\gamma_n)} \ls \|f\|_{L^p(\gamma_n)}.
\end{align}
\end{theorem}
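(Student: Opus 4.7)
The plan is to deduce the hypercontractive estimate from the Gaussian logarithmic Sobolev inequality via Gross's classical argument: along the semigroup flow, let the exponent $q(t)$ slide so that the time derivative of the $L^{q(t)}$-norm of $P_tf$ is exactly the quantity controlled by the log-Sobolev inequality, with the sliding rate tuned to make the resulting derivative non-positive.

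First I would reduce to $f\ge 0$ using $|P_tf|\le P_t|f|$ (immediate from \eqref{eq:Mehler}), and by approximation to smooth $f$ bounded below by some $\varepsilon>0$ so that all logarithms and negative powers are integrable. Then define
\[
q(t) := 1 + (p-1)e^{2t}, \qquad t\ge 0,
\]
so that $q(0)=p$ and $q'(t)=2(q(t)-1)$. Since $\gamma_n$ is a probability measure, $r\mapsto \|h\|_{L_r(\gamma_n)}$ is non-decreasing, so it suffices to prove $\|P_tf\|_{L_{q(t)}(\gamma_n)}\le \|f\|_{L_p(\gamma_n)}$ for all $t\ge 0$; this covers every admissible triple $(p,q,t)$ in the hypothesis.

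Set $\Phi(t):=\|P_tf\|_{L_{q(t)}(\gamma_n)}$ and $u:=P_tf$. Using $\partial_tu=Lu$ from \eqref{eq:heat-eqt} together with Gaussian integration by parts $\int h\,Lu\,d\gamma_n = -\int\langle\nabla h,\nabla u\rangle\,d\gamma_n$, a direct computation of $\frac{d}{dt}\!\int u^{q(t)}\,d\gamma_n$ and rearrangement yields
\[
q^{2}\,\Phi^{q-1}\,\Phi' = q'\,\mathrm{Ent}_{\gamma_n}(u^{q}) - q^{2}(q-1)\!\int u^{q-2}|\nabla u|^{2}\,d\gamma_n,
\]
where $\mathrm{Ent}_{\gamma_n}(h) := \int h\log h\,d\gamma_n - (\int h\,d\gamma_n)\log(\int h\,d\gamma_n)$. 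Applying the Gaussian log-Sobolev inequality $\mathrm{Ent}_{\gamma_n}(g^{2}) \le 2\int|\nabla g|^{2}\,d\gamma_n$ to $g = u^{q/2}$ gives $\mathrm{Ent}_{\gamma_n}(u^{q})\le (q^{2}/2)\int u^{q-2}|\nabla u|^{2}\,d\gamma_n$, so the right-hand side is bounded by $q^{2}\bigl(q'/2-(q-1)\bigr)\!\int u^{q-2}|\nabla u|^{2}\,d\gamma_n = 0$ by our choice of $q(t)$. Hence $\Phi$ is non-increasing and $\Phi(t)\le \Phi(0)=\|f\|_{L_p(\gamma_n)}$.

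The main obstacle is of course the Gaussian logarithmic Sobolev inequality itself. I would establish it by the standard three-step route: first prove the two-point inequality on $\{-1,+1\}$ with the fair Bernoulli measure (an elementary single-variable calculus estimate with sharp constant), then tensorize to $\{-1,+1\}^{n}$ using the subadditivity of entropy and the additivity of the Dirichlet form under product measures, and finally transfer the inequality to the Gaussian via the central limit theorem applied to normalized Rademacher sums $n^{-1/2}\sum_{i=1}^{n}\varepsilon_{i}x_{i}$, using uniform tightness on a dense class of smooth test functions to preserve the constant in the limit. An equivalent route is the direct semigroup proof via the Bakry--\'Emery $\Gamma_{2}$-calculus, since the Ornstein--Uhlenbeck generator satisfies the curvature-dimension bound $\mathrm{CD}(1,\infty)$.
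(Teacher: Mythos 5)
Your argument is correct: it is the classical Gross semigroup proof of Nelson's theorem, and the identity $q^{2}\Phi^{q-1}\Phi' = q'\,\mathrm{Ent}_{\gamma_n}(u^{q}) - q^{2}(q-1)\int u^{q-2}|\nabla u|^{2}\,d\gamma_n$ together with the choice $q(t)=1+(p-1)e^{2t}$ (so $q'=2(q-1)$) does make the right-hand side vanish after applying the Gaussian log-Sobolev inequality to $g=u^{q/2}$. The paper states this result without proof, citing Nelson and immediately noting Gross's equivalence between hypercontractivity and the logarithmic Sobolev inequality, so your route is exactly the one the paper alludes to.
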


Gross \cite{Gro} showed that the hypercontractive property is intimately connected to the logarithmic Sobolev inequality. 
In particular, he proved that a probability measure is hypercontractive if and only if satisfies a logarithmic Sobolev inequality. 
See also \cite[Chapter 5]{BGL}.

%%%%%%%%%%%%%%%%%%%%%%%
\subsection{Smoothening convex functions}
%%%%%%%%%%%%%%%%%%%%%%%

We are now ready to present the proofs of the new small ball estimates 
announced in the Introduction (Theorem \ref{thm:main-1}) in terms of the parameters $\beta$ and $\tilde \beta$.

%%%%%%%%%%%%%%%%%%%%%%%%%%%%%%%%%%%%%%%%%%
\subsubsection{From hypercontractivity to small ball estimates}\label{subsec:hsb}
%%%%%%%%%%%%%%%%%%%%%%%%%%%%%%%%%%%%%%%%%%

Recall that for any smooth function $f$ with $\mathbb E[f(G)] \neq 0$ we define 
\begin{align}
\tilde \beta(f) = \frac{\mathbb E\|\nabla f(G)\|_2^2}{(\mathbb E[f(G)] )^2}.
\end{align}

Our goal is to prove the following small ball estimate:

\begin{theorem} \label{thm:hyper-sb}
Let $f:\mathbb R^n \to [0,\infty)$ be a positively homogeneous,
convex map with $\|\nabla f\|_2 \in L^2(\gamma_n)$ and 
let $\tilde \beta =\tilde \beta(f)$. 
Suppose that for some 
$L>0$ we have
\begin{align}\label{eq:eq-L1-bal}
\sum_{j=1}^n \|\partial_j f\|_{L^1(\gamma_n)}^2 \ls L (\mathbb Ef(G))^2. 
\end{align} Then, for any $\delta \in (0,1/2)$ we have
\begin{align*}
\mathbb P \left\{ f(G) \ls \delta \mathbb E[f(G)] \right\} \ls 
\exp\left\{ -c\delta^2 \left( \frac{1}{\tilde \beta}\right )^{\tau(\delta)} \left( \frac{1}{L}\right)^{1-\tau(\delta)} \right\},
\end{align*} where $\tau(\delta) \asymp \delta^2$.
\end{theorem}

Next proposition is the key observation.

\begin{proposition} \label{prop:key-1} Let $f$ be a function with $\|\nabla f\|_2 \in L^2$ and let $L>0$ be such that
\begin{align*}  
\sum_{j=1}^n    \|\partial_j f\|_{L^1(\gamma_n)}^2 \ls L (\mathbb E[f(G)] )^2.
\end{align*} Then, for all $t\gr 0$ we have
\begin{align*}
\frac{1}{\tilde \beta(P_tf)} \gr e^{2t}  \left( \frac{1}{\tilde \beta(f) } \right)^{ \frac{2e^{- 2t}} {1+e^{-2t} } }\left( \frac{1}{L} \right)^{ \frac{1-e^{- 2t}}{1+e^{-2t}} }.
\end{align*} 
\end{proposition}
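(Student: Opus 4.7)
The plan is to pivot everything through the operator bound $\partial_i(P_t f) = e^{-t} P_t(\partial_i f)$ from Lemma~\ref{lem:ou-1}, so that the numerator of $\tilde\beta(t)$ becomes
\[
\mathbb E \|\nabla P_t f(G)\|_2^2 \;=\; e^{-2t}\sum_{j=1}^n \|P_t(\partial_j f)\|_{L_2(\gamma_n)}^2.
\]
The denominator is unchanged, since $\mathbb E P_t f(G) = \mathbb Ef(G)$. So the whole proposition reduces to upper-bounding the right-hand side by a geometric mean of $\sum_j \|\partial_j f\|_{L_1}^2$ (which we control via $L(\mathbb Ef)^2$) and $\sum_j \|\partial_j f\|_{L_2}^2$ (which equals $\tilde\beta(\mathbb Ef)^2$).

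The idea for this upper bound is classical two-step hypercontractivity followed by interpolation. First, I apply Nelson's theorem (Theorem~\ref{thm:ou-hyper}) with exit exponent $q = 2$; the admissibility condition $q \le 1 + e^{2t}(p-1)$ forces the sharp choice $p = 1 + e^{-2t}$, giving
\[
\|P_t(\partial_j f)\|_{L_2}^2 \;\le\; \|\partial_j f\|_{L_p}^2.
\]
Second, I use log-convexity of $L_r$-norms to interpolate between $L_1$ and $L_2$: writing $\tfrac1p = \theta + \tfrac{1-\theta}{2}$ yields $\theta = \tfrac{2-p}{p} = \tfrac{1-e^{-2t}}{1+e^{-2t}}$, and then
\[
\|\partial_j f\|_{L_p}^2 \;\le\; \|\partial_j f\|_{L_1}^{2\theta}\,\|\partial_j f\|_{L_2}^{2(1-\theta)}.
\]

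Summing on $j$ and applying Hölder's inequality with conjugate exponents $1/\theta$ and $1/(1-\theta)$ decouples the two norms:
\[
\sum_{j=1}^n \|\partial_j f\|_{L_1}^{2\theta}\|\partial_j f\|_{L_2}^{2(1-\theta)} \;\le\;
\Big(\sum_j \|\partial_j f\|_{L_1}^2\Big)^{\theta}\Big(\sum_j \|\partial_j f\|_{L_2}^2\Big)^{1-\theta}.
\]
Using the assumption \eqref{eq:eq-L1-bal} for the first factor and the identity $\sum_j \|\partial_j f\|_{L_2}^2 = \mathbb E\|\nabla f(G)\|_2^2 = \tilde\beta\,(\mathbb Ef)^2$ for the second, I obtain
\[
\mathbb E\|\nabla P_tf(G)\|_2^2 \;\le\; e^{-2t}\, L^{\theta}\, \tilde\beta^{1-\theta}\,(\mathbb Ef(G))^2.
\]
Dividing by $(\mathbb EP_t f(G))^2 = (\mathbb Ef(G))^2$ and taking reciprocals yields exactly the claimed inequality, once one verifies $1-\theta = \tfrac{2e^{-2t}}{1+e^{-2t}}$.

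I do not expect serious obstacles. The only subtle point is matching exponents: one must anticipate that the correct domain exponent in Nelson's theorem is the $t$-dependent $p = 1+e^{-2t}$ (and not a fixed $p$), because it is precisely this choice that makes the interpolation parameter $\theta$ convert into the exponents $\tfrac{1-e^{-2t}}{1+e^{-2t}}$ and $\tfrac{2e^{-2t}}{1+e^{-2t}}$ appearing in the statement. A minor sanity check at $t=0$ gives $\theta = 0$ and the inequality degenerates to $1/\tilde\beta \ge 1/\tilde\beta$, while $t\to\infty$ pushes $\theta \to 1$ and recovers the hypercontractive gain driven entirely by the $L_1$-balancing $L$.
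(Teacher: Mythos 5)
Your proposal is correct and follows essentially the same route as the paper: the commutation $\partial_i(P_tf)=e^{-t}P_t(\partial_i f)$, Nelson's hypercontractivity into $L_2$ from $L_p$ with $p=1+e^{-2t}$, and $L_1$--$L_2$ interpolation are exactly the content of the paper's Lemma~\ref{lem:A-1}, and your Hölder step with exponents $1/\theta$, $1/(1-\theta)$ is equivalent to the paper's weighted Jensen step (concavity of $u\mapsto u^{\tanh t}$ with weights $\|\partial_i f\|_{L_2}^2$), since $\theta=\tanh t$. No gaps.
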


For the proof we will need the following lemma which is a consequence of Nelson's hypercontractivity. 

\begin{lemma} \label{lem:A-1}
Let $h$ be a function in $L^2$. Then, for all $t\gr 0$ one has
\begin{align}
\|P_t h\|_{L^2} \ls \|h\|_{L^2} \left(\frac{\|h\|_{L^1}}{\|h\|_{L^2}} \right)^{\tanh t}.
\end{align}
\end{lemma}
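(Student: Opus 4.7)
The plan is to prove this by combining two classical ingredients: Nelson's hypercontractive inequality (Theorem 2.5 in the excerpt) to pass from $L_2$ to a smaller-exponent norm after applying $P_t$, followed by log-convexity of $L_p$ norms (Hölder's interpolation) to sandwich the intermediate norm between $\|h\|_{L_1}$ and $\|h\|_{L_2}$.

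First I would set the parameters in Nelson's theorem so that the contraction lands precisely in $L_2$. Take $q=2$ and $p=p(t)=1+e^{-2t}$. The admissibility condition $q \ls 1+e^{2t}(p-1)$ becomes an equality, since $1+e^{2t}\cdot e^{-2t}=2$. Hence Theorem 2.5 gives
\begin{equation*}
\|P_t h\|_{L_2(\gamma_n)} \ls \|h\|_{L_{p(t)}(\gamma_n)}.
\end{equation*}

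Next I would interpolate the $L_{p(t)}$ norm between $L_1$ and $L_2$ via the log-convexity of $L_r$ norms (which is immediate from Hölder's inequality). Writing $\frac{1}{p(t)}=\frac{\theta}{1}+\frac{1-\theta}{2}$, I get $\theta = \frac{2}{p(t)}-1 \in (0,1)$, and consequently
\begin{equation*}
\|h\|_{L_{p(t)}} \ls \|h\|_{L_1}^{\theta}\,\|h\|_{L_2}^{1-\theta} = \|h\|_{L_2}\left(\frac{\|h\|_{L_1}}{\|h\|_{L_2}}\right)^{\theta}.
\end{equation*}
Chaining the two displays with this value of $\theta$ yields the claimed bound.

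There is no serious obstacle here: the whole argument is bookkeeping once one observes that the critical exponent in Nelson's inequality, when one wants to land in $L_2$, is exactly $p(t)=1+e^{-2t}$, which makes both steps align. The only minor care is to ensure $h\in L_2$ so that the interpolation is valid (if $\|h\|_{L_1}=0$ the statement is trivial, and one may tacitly assume $\|h\|_{L_2}>0$; otherwise both sides vanish). In particular, no use of convexity or further properties of $h$ is needed — this is a purely functional-analytic statement that will later be fed, in Proposition 2.7, with $h$ chosen to be $\partial_j (P_{t_0}f)^2$ or a similar expression derived from $f$.
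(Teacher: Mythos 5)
Your proof is correct and matches the paper's argument exactly: hypercontractivity with $q=2$, $p=1+e^{-2t}$ (where the admissibility condition holds with equality), followed by the Hölder/log-convexity interpolation $\|h\|_p \ls \|h\|_1^{\theta}\|h\|_2^{1-\theta}$ with $\theta = \tfrac{2}{p}-1$, which is the same as the paper's $\|h\|_p^p \ls \|h\|_1^{2-p}\|h\|_2^{2(p-1)}$ after taking $p$-th roots. No gaps.
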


\noindent {\it Proof.} Using the hypercontractivity (Theorem \ref{thm:ou-hyper}) we get
\begin{align*}
\|P_t h\|_2 \ls \|h\|_p, \quad p=1+e^{-2t}.
\end{align*} By H\"older's inequality we derive
\begin{align*}
\|h\|_p^p \ls \|h\|_1^{2-p} \|h\|_2^{2(p-1)}.
\end{align*} The result follows. \prend

\bigskip

Now we turn to proving the key proposition. The argument we follow can be traced back to 
\cite{BKS} (see also \cite[Theorem 5.1]{Cha}):

\bigskip

\noindent {\it Proof of Proposition \ref{prop:key-1}.} Let $A_i= \|\partial_i f\|_{L^2}$ and $a_i=\|\partial_if\|_{L^1}$. We may write
\begin{align} \label{eq:key-1-1}
\mathbb E \| \nabla (P_tf)\|_2^2 &= e^{-2t} \sum_{i=1}^n \| P_t(\partial_i f) \|_{L^2}^2 
\ls e^{-2t} \sum_{i=1}^n A_i^2 \left( \frac{a_i^2}{A_i^2}\right)^{\tanh t },
\end{align} where we have also used Lemma \ref{lem:A-1}. Note that the function $u\mapsto u^{\tanh t }, \; u>0$ is concave, thus 
Jensen's inequality implies
\begin{align} \label{eq:key-1-2}
\sum_{i=1}^n A_i^2 \left( \frac{a_i^2}{A_i^2}\right)^{\tanh t } \ls \mathbb E\|\nabla f\|_2^2 \left( \frac{\sum_{i=1}^n a_i^2}{\mathbb E\|\nabla f\|_2^2}\right)^{\tanh t}.
\end{align} Combining \eqref{eq:key-1-1} with \eqref{eq:key-1-2} we arrive at
\begin{align} \label{eq:ineq-grad}
\mathbb E \| \nabla (P_tf)\|_2^2 \ls  e^{-2t} \mathbb E \| \nabla f \|_2^2 R(f)^{-\tanh t},
\end{align} where $R(f)$ was defined in \eqref{eq:R-f}.
Dividing both sides with $(\mathbb E[f(G)])^2$, using the assumption and the definition of $\tilde \beta(f)$, the above estimate yields
\begin{align*}
\tilde \beta(P_tf) \ls e^{-2t} \tilde \beta(f) \left( \frac{L}{\tilde \beta(f)} \right)^{ \tanh t},
\end{align*}
as required. \prend

\bigskip

Now we turn to proving our first main result in this section.

\bigskip

\noindent {\it Proof of Theorem \ref{thm:hyper-sb}.}  Since $f$ is positively homogeneous and convex
it is also sub-additive, hence
\begin{align*}
P_tf(x) \ls e^{-t} f(x) +\sqrt{1-e^{-2t}}\mathbb Ef(G), \quad x\in \mathbb R^n, \quad t\gr 0.
\end{align*} We fix $\delta\in (0,1/2)$. For any $t>0$, using Theorem \ref{thm:sdh}, we may write
\begin{align*}
\mathbb P \left( f(G) \ls \delta \mathbb E f(G) \right) 
\ls \mathbb P \left( P_tf(G) \ls \left( \delta e^{-t}+\sqrt{1-e^{-2t}} \right) \mathbb E [P_tf(G)] \right) \ls \exp \left( -c \varepsilon(t)^2 / \tilde\beta(P_tf) \right),
\end{align*} provided that 
\begin{align*}
\varepsilon(t):=1-\delta e^{-t} -\sqrt{1-e^{-2t}} = e^{-t} \left( \frac{e^{-t}}{1+\sqrt{1-e^{-2t}}} -\delta \right) >0.
\end{align*} We select time $t_0=t_0(\delta)>0$ such that 
\begin{align*}
\frac{e^{-t_0}}{1+\sqrt{1-e^{-2t_0}}}= 2\delta.
\end{align*} For this choice we have $ e^{-t_0} \asymp \delta $ and by taking into account Proposition \ref{prop:key-1} we conclude 
with 
\[\tau(\delta) = \frac{2e^{-2t_0}}{1+e^{-2t_0}} \asymp \delta^2 .\] 
The proof is complete. \prend

%%%%%%%%%%%%%%%%%%%%%%%%%%%%%%%%%%%%%%%%
\subsubsection{From superconcentration to small ball estimates} \label{sec:s-sb}
%%%%%%%%%%%%%%%%%%%%%%%%%%%%%%%%%%%%%%%%

Recall the definition of the parameter $\beta$. For any smooth function $f$ with $\mathbb Ef\neq 0$ we have
\begin{align}
\beta(f) = \frac{{\rm Var}[f(G)]}{(\mathbb E[f(G)] )^2}.
%\quad  L(f) =\frac{\sum_{j=1}^n (\mathbb E|\partial_j f(G)|)^2}{(\mathbb E[f(G)])^2}.
\end{align}

The main result of this section is the following variant of Theorem \ref{thm:hyper-sb}:

\begin{theorem} \label{thm:super-sb}
Let $f:\mathbb R^n\to [0,\infty)$ be a positively homogeneous, convex function in $L^2(\gamma_n)$ 
and let $\beta=\beta(f)$. Suppose that 
\begin{align*}
\sum_{j=1}^n \| \partial_j f\|_{L^1}^2 \ls L (\mathbb Ef(G))^2, \quad L>0.
\end{align*} Then, for any $\delta \in (0, 1/2)$ we have
\begin{align*}
\mathbb P \left\{ f(G) \ls \delta \mathbb Ef(G) \right\} \ls 
\exp \left\{ - c \delta^2 \left(\frac{1}{\beta} \right)^{\omega(\delta) } \left( \frac{1}{L} \right)^{1-\omega(\delta)} \right\},
\end{align*} with $\omega(\delta) \asymp \delta^\alpha$ and $\alpha$ is the constant from Corollary \ref{cor:bd-s-R}.
\end{theorem}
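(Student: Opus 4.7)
The plan is to follow exactly the strategy of Theorem~\ref{thm:hyper-sb}, but replace Theorem~\ref{thm:sdh} by the variance-sensitive deviation inequality Theorem~\ref{thm:sdi} and prove a variance analogue of Proposition~\ref{prop:key-1}. Subadditivity of $f$ yields $P_tf(x)\ls e^{-t}f(x)+\sqrt{1-e^{-2t}}\,\mathbb{E}f(G)$, hence $\{f(G)\ls \delta\,\mathbb{E}f(G)\}\subseteq \{P_tf(G)\ls (1-\varepsilon(t))\,\mathbb{E}P_tf(G)\}$ with $\varepsilon(t):=1-\delta e^{-t}-\sqrt{1-e^{-2t}}$. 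Since $P_tf$ is convex (Lemma~\ref{lem:ou-1}(6)), Theorem~\ref{thm:sdi} applied to $P_tf$ gives
\[
\mathbb{P}\{f(G)\ls \delta\,\mathbb{E}f(G)\}\ls \exp\bigl(-c\,\varepsilon(t)^2/\beta(t)\bigr),\quad \beta(t):=\mathrm{Var}[P_tf(G)]/(\mathbb{E}f(G))^2,
\]
whenever $\varepsilon(t)>0$.

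The decisive step is an analogue of Proposition~\ref{prop:key-1} with $\beta$ in place of $\tilde\beta$:
\[
\beta(t)\;\lesssim\; e^{-2t}\,\beta^{\,1-\tanh t}\,L^{\tanh t}. \qquad(\star)
\]
I would prove $(\star)$ by applying Corollary~\ref{cor:bd-s-R} to $P_tf$, obtaining $\mathrm{Var}[P_tf]\lesssim \mathbb{E}\|\nabla P_tf\|_2^2/\log R(P_tf)$, and then estimating the numerator by the hypercontractive bound \eqref{eq:ineq-grad} from the proof of Proposition~\ref{prop:key-1}. For the denominator, I would derive a suitable lower bound on $\log R(P_tf)$ from the $L_1$-contraction $\|P_t\partial_if\|_{L_1}\ls a_i$ combined with the hypercontractive control on $\|P_t\partial_if\|_{L_2}$ from Lemma~\ref{lem:A-1}. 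A second application of Corollary~\ref{cor:bd-s-R}, now to $f$ itself, supplies the superconcentration relation $\tilde\beta\gtrsim \beta\log R(f)$, which together with the previous step allows one to replace the residual $\tilde\beta$-factor in the hypercontractive variance bound by $\beta$, yielding $(\star)$.

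To conclude, fix $\delta\in(0,1/2)$ and select the smoothing time $t_0$ with $e^{-t_0}\asymp \delta^{1/2}$. A direct calculation (identical in spirit to the one in the proof of Theorem~\ref{thm:hyper-sb}) gives $\varepsilon(t_0)\asymp \delta$ and $1-\tanh t_0\asymp \delta$. Setting $\omega(\delta):=1-\tanh t_0$ and plugging $(\star)$ into the deviation inequality of the first paragraph produces
\[
\mathbb{P}\{f(G)\ls \delta\,\mathbb{E}f(G)\}\ls \exp\bigl(-c\,\delta^2\,\beta^{-\omega(\delta)}\,L^{-(1-\omega(\delta))}\bigr),
\]
as required.

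The main obstacle is the establishment of $(\star)$. The hypercontractive machinery used in Proposition~\ref{prop:key-1} naturally yields the bound with $\tilde\beta$; replacing it by $\beta$ forces one to invoke the full superconcentration content of Corollary~\ref{cor:bd-s-R} twice, and to verify that the resulting logarithmic factors in $R(P_tf)$ and in $R(f)$ cancel against each other. The improvement from the exponent $\tau(\delta)\asymp \delta^2$ of Theorem~\ref{thm:hyper-sb} to the exponent $\omega(\delta)\asymp \delta$ obtained here reflects exactly this gain, which is also visible at the level of the optimization: the required smoothing time shortens from $\log(1/\delta)$ to $\tfrac12\log(1/\delta)$.
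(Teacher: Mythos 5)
Your overall scheme---subadditivity to pass from $\{f(G)\ls\delta\,\Exp f(G)\}$ to a small--deviation event for $P_tf$, convexity of $P_tf$, Theorem~\ref{thm:sdi}, and an optimization over the smoothing time---is exactly the paper's, and your closing computation would be fine \emph{modulo} the estimate $(\star)$. The gap is in $(\star)$ itself. The hypercontractive bound \eqref{eq:ineq-grad} controls $\Exp\|\nabla P_tf\|_2^2$, hence naturally produces $\tilde\beta$ on the right-hand side; your device for trading $\tilde\beta$ for $\beta$ is ``a second application of Corollary~\ref{cor:bd-s-R}, now to $f$ itself''. But that corollary reads $\Exp\|\nabla f(G)\|_2^2\gr \tfrac{\alpha}{2}\,\Var[f(G)]\,\log R(f)$, i.e.\ it bounds $\tilde\beta$ from \emph{below} by $\beta$ (times a logarithm). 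To replace $\tilde\beta^{\,1-\tanh t}$ by $\beta^{\,1-\tanh t}$ inside an \emph{upper} bound for $\Var[P_tf(G)]$ you would need an upper bound on $\tilde\beta/\beta=1/s(f)$, and none is available: that ratio is precisely the superconcentration gap and is already of order $\log n$ for $\|\cdot\|_\infty$. In addition, the only usable lower bound on your denominator is $\log R(P_tf)\gr\log\big(e^{2t}\beta(t)/L\big)$ (via the $L_1$--contraction of $P_t$ and Poincar\'e), so $\beta(t)$ reappears inside the logarithm; a static inequality of the form $\Var[P_tf]\lesssim \Exp\|\nabla P_tf\|_2^2/\log R(P_tf)$ does not resolve this self--reference, and your sketch gives no way to ``cancel the logarithmic factors''.

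The paper's Proposition~\ref{prop:key} circumvents both issues by a dynamic argument that never involves $\tilde\beta$: the exact decay identity $v(t)=v(0)\exp\big(-2\int_0^t s(z)^{-1}\,dz\big)$ of Lemma~\ref{lem:ou-2} starts the estimate from $\Var[f(G)]$ itself, and combining Corollary~\ref{cor:bd-s-R} with $R(P_zf)\gr e^{2z}\beta(z)/L$ converts the self--reference into the differential inequality $\psi'+\alpha\psi\gr 2\alpha z+\alpha\log(\beta/L)$ for $\psi(t)=2\int_0^t s(z)^{-1}dz$, which is then integrated (Gronwall). The outcome is $1/\beta(t)\gr e^{2t-2}(1/\beta)^{e^{-t}}(1/L)^{1-e^{-t}}$---note the exponent $e^{-t}$, not your $1-\tanh t\asymp e^{-2t}$---and the time is then chosen with $e^{-t_0}\asymp\delta$ (not $\delta^{1/2}$), yielding $\omega(\delta)=e^{-t_0}\asymp\delta$. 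Your $(\star)$ is a genuinely different (and, in the regime $1/L\gg 1/\beta$ relevant to the applications, stronger) inequality than what the paper proves, and nothing in your argument establishes it; as written, the proof does not go through.
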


The proof follows the same lines as in the previous paragraph with the appropriate modifications. Unlike to the previous one, 
this approach exploits the exponential decay of the variance along the flow $P_t$ as was previously discussed in the Introduction. 
More precisely, for any 
Markov semigroup $P_t$ with invariant measure $\mu$ one has 
\begin{align} \label{eq:var-decay}
{\rm Var}(P_tf) \ls e^{-2\lambda_1 t} {\rm Var}(f), \quad t\gr 0,
\end{align} where $\lambda_1$ is the spectral gap of $\mu$. This well known property can be found in any classical text concerned with
semigroup tools, see e.g. \cite{Led-surv, BGL}. However, this dimension free estimate cannot later provide dimension dependent bounds of the
right order of magnitude, which is the central theme of this work due to the conjectured extreme case of $\ell_\infty$-norm. Note that the latter
is genuinely high-dimensional functional whereas \eqref{eq:var-decay} takes into account the extremals in Poincar\'e inequality, i.e. linear functionals.
In other words, the bound in terms of $\lambda_1$ is the worst case scenario amongst the ones we have to encounter in the light of
\[
\lambda_1(\mu)= \inf \left\{ \frac{1}{s_\mu(f)} : f \; {\rm smooth}, \; {\rm nonconstant} \right\}.
\]
By carefully revisiting the proof of \eqref{eq:var-decay} we see that something more is true, which fixes this sub-optimality issue. That said,
the proof of the next lemma is standard and can be found implicitly in \cite{Led-surv, BGL}. Since the formulation we need 
is not explicitly stated there, we shall include a (sketch of) proof for reader's convenience. 

\begin{lemma} \label{lem:ou-2}
Let $f$ be in $L^2(\gamma_n)$. We define $v(t)={\rm Var}[P_tf(G)]$ and $s(t) = s(P_tf)$.
Then, we have the following properties:

\begin{itemize}
\item [i.] $v'(t)=-2\mathbb E\|\nabla (P_tf) \|_2^2$.

\item [ii.] $v'(t) s(t)= -2v(t)$, hence for all $t\gr0$ we have
\begin{align*}
v(t)=v(0) \exp \left( -2 \int_0^t \frac{dz}{s(z)} \right).
\end{align*}
\item [iii.] $v(t)$ is log-convex and hence, $s(t)$ is nondecreasing.
\end{itemize}

\end{lemma}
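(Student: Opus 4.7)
The plan is to treat the three items in order, with the main analytic content concentrated in (i) and (iii).

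For (i), I would start from the identity $\mathbb{E}[P_tf(G)] = \mathbb{E}[f(G)]$ (Lemma \ref{lem:ou-1}(3)), which gives $v(t) = \mathbb{E}[(P_tf)^2] - (\mathbb{E}f)^2$. Differentiating under the expectation and invoking the heat equation $\partial_t P_tf = L(P_tf)$ from \eqref{eq:heat-eqt} yields $v'(t) = 2\,\mathbb{E}[P_tf\cdot L(P_tf)]$. Then the Gaussian integration-by-parts identity $\mathbb{E}[g\cdot Lg] = -\mathbb{E}\|\nabla g\|_2^2$ — which is simply the Dirichlet form associated with the OU generator $L = \Delta - \langle x,\nabla\cdot\rangle$ — delivers (i).

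For (ii), the definition of the superconcentration constant combined with (i) reads
\[
s(t) \;=\; \frac{v(t)}{\mathbb{E}\|\nabla(P_tf)\|_2^2} \;=\; -\frac{2\,v(t)}{v'(t)},
\]
which is the asserted relation $v'(t)s(t) = -2v(t)$. Rearranging to $(\log v)'(t) = v'(t)/v(t) = -2/s(t)$ and integrating from $0$ to $t$ gives the exponential representation of $v$.

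The main step, and the place where I expect the work to lie, is (iii). The plan is to use the Hermite (Wiener) chaos decomposition of $f$: write $f = \sum_{k\gr 0} f_k$, orthogonally in $L_2(\gamma_n)$, where $-Lf_k = k\,f_k$. Since $P_tf_k = e^{-kt}f_k$, orthogonality of the chaoses yields
\[
v(t) \;=\; \sum_{k\gr 1} e^{-2kt}\,\|f_k\|_{L_2(\gamma_n)}^2.
\]
Each summand is log-linear in $t$, hence log-convex, and a nonnegative linear combination of log-convex functions is again log-convex. This closure property is a classical consequence of H\"older's inequality: if $a,b \gr 0$ are log-convex, then for $\lambda \in [0,1]$ and $s,t \gr 0$,
\[
a(s)^\lambda a(t)^{1-\lambda} + b(s)^\lambda b(t)^{1-\lambda} \;\ls\; \bigl(a(s)+b(s)\bigr)^\lambda \bigl(a(t)+b(t)\bigr)^{1-\lambda},
\]
which extends inductively to finite sums and then, by monotone convergence, to the infinite series above. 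Log-convexity of $v$ is equivalent to $v(t)v''(t) \gr (v'(t))^2$; differentiating $s(t) = -2v(t)/v'(t)$ yields
\[
s'(t) \;=\; 2\,\frac{v(t)v''(t) - (v'(t))^2}{(v'(t))^2} \;\gr\; 0,
\]
so $s$ is nondecreasing, completing (iii).

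The only obstacle I anticipate is purely technical: justifying termwise differentiation of the spectral series and its $L_2(\gamma_n)$-convergence. Both are guaranteed under the standing smoothness and integrability hypothesis on $f$, which is already implicit in the statement since otherwise $s(t)$ would not even be defined.
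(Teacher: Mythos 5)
Your proof is correct and matches the paper's: part (i) is proved there by exactly the same differentiation-under-the-integral plus integration-by-parts argument, and the paper dismisses (ii) and (iii) with "the rest of the assertions easily follow." Your Hermite-chaos expansion of $v(t)$ and the resulting log-convexity is the standard way to fill in (iii), consistent with the references the paper cites.
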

\begin{proof} ({\it Sketch}).
Since all assertions include shift invariant quantities we may assume without loss of generality that $\mathbb Ef =0$. Thus, 
$v(t) =\mathbb E (P_tf)^2$ and differentiation in terms of $t$, under the integral sign, yields
\[
\frac{dv}{dt} = \frac{d}{dt} \left( \mathbb E (P_tf)^2 \right) = 2\mathbb E[P_tf {\mathcal L}P_t f] = -2 \mathbb E \langle \nabla P_tf,  \nabla P_tf \rangle
=-2\mathbb E \|\nabla (P_tf)\|_2^2,
\] where we have used that $P_t$ solves the heat equation \eqref{eq:heat-eqt} and the generator $\mathcal L$ satisfies the integration by parts formula,
i.e. $\mathbb E (u{\mathcal L}v) =- \mathbb E \langle \nabla u, \nabla v \rangle$ for $u,v$ smooth functions.
The second assertion readily follows. For the log-convexity of $t \mapsto v(t)$ note that 
\[
\frac{d^2 v}{dt^2} = \frac{d}{dt} \left( 2 \mathbb E [P_{2t}f {\mathcal L}f] \right) = 4 \mathbb E[{\mathcal L}P_{2t}f {\mathcal L}f] = 4 \mathbb E[({\mathcal L}P_tf)^2],
\] where we have used the fact that $P_t$ and ${\mathcal L}$ commute and that $P_t$ is self-adjoint. The third assertion now 
follows by the Cauchy-Schwarz inequality. For more details we refer the reader to \cite[p.183-184]{BGL}.
\end{proof}

\begin{remark} The above lemma provides a link between the superconcentration phenomenon 
and the variance decay, during the ensuing motion. 
This can be viewed as an alternative definition to the superconcentration. 
Alternative (equivalent) definitions (via the gap in Poincar\'e inequality or in connection with chaos) can be found in \cite[Chapter 3 \& 4]{Cha}. 
\end{remark}

We will need the next proposition, which is the analogue of Proposition \ref{prop:key-1} in terms of the parameter $\beta(f)$.
For the proof we employ Corollary \ref{cor:bd-s-R} which we take for granted until we prove it (with $\alpha=1$) at the end
of this section.

\begin{proposition} \label{prop:key} Let $f\in L^2(\gamma_n)$ be a smooth function and let $L>0$ such that
\begin{align*}  %\label{eq:eq-L1-bal}
\sum_{j=1}^n  \| \partial_j f\|_{L^1} ^2 \ls L (\mathbb Ef(G))^2.
\end{align*} Then, for all $t\gr 0$ we have
\begin{align*}
\frac{1}{\beta(P_tf)} \gr  e^{2t-2}\left( \frac{1}{\beta (f)} \right)^{ e^{- \alpha t}}\left( \frac{1}{L} \right)^{1-e^{- \alpha t} },
\end{align*} where $\alpha$ is the constant from Corollary \eqref{cor:bd-s-R} 
\end{proposition}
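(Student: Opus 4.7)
The plan is to convert the hypercontractive bound on the superconcentration constant from Corollary \ref{cor:bd-s-R} (with $\alpha = 1$) into a linear first-order differential inequality for $u(t) := \log(1/\beta(t))$, which can be solved in closed form. The key observation is that once we apply Poincar\'e's inequality to $P_tf$, the ratio $R(P_tf)$ can itself be bounded from below by a quantity involving $\beta(t)$, producing a self-referential estimate that closes under an ODE argument.

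First I would bound the two pieces of $R(P_tf)$. Using property (2) of Lemma \ref{lem:ou-1} together with the $L_1$-contractivity (property (5)) and the hypothesis gives
\[
\sum_{j=1}^n \|\partial_j (P_tf)\|_{L_1}^2 \;=\; e^{-2t}\sum_{j=1}^n \|P_t(\partial_j f)\|_{L_1}^2 \;\ls\; e^{-2t}\, L\,(\Exp f(G))^2.
\]
For the numerator, the Gaussian Poincar\'e inequality \eqref{eq:Poin} applied to $P_tf$ yields $\Exp\|\nabla(P_tf)\|_2^2 \gr \Var[P_tf(G)]$. Combining the two estimates,
\[
R(P_tf) \;\gr\; \frac{\Var[P_tf(G)]}{e^{-2t}L\,(\Exp f(G))^2} \;=\; \frac{e^{2t}\beta(t)}{L}.
\]

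Next I would invoke Corollary \ref{cor:bd-s-R} with $\alpha=1$ applied to $P_tf$ and combine it with Lemma \ref{lem:ou-2}(ii), which identifies $u'(t) = -v'(t)/v(t) = 2/s(t)$. Taking logarithms on the bound above, this produces
\[
u'(t) \;\gr\; \log R(P_tf) \;\gr\; 2t - \log L - u(t),
\]
i.e., the linear differential inequality $u'(t) + u(t) \gr 2t - \log L$ subject to $u(0) = \log(1/\beta)$. Multiplying by the integrating factor $e^t$, integrating from $0$ to $t$, and using the elementary integrals $\int_0^t 2z e^z\,dz = 2e^t(t-1) + 2$ and $\int_0^t e^z\,dz = e^t - 1$ gives
\[
u(t) \;\gr\; e^{-t} u(0) + (2t - 2) + 2e^{-t} + (1-e^{-t})\log(1/L).
\]
Dropping the nonnegative term $2e^{-t}$ and exponentiating recovers exactly the claimed inequality.

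The expected main obstacle is identifying the correct calibration that keeps the ODE linear: using the crude Poincar\'e bound $\Exp\|\nabla(P_tf)\|_2^2 \gr \Var[P_tf(G)]$ — rather than any finer estimate involving $\tilde\beta(t)$ or $s(t)$ — is precisely what allows the right-hand side of the bound on $R(P_tf)$ to be expressed in $\beta(t)$ alone. Any sharper substitution would reintroduce $s(t)$ into the inequality, making it nonlinear in $u$ and destroying the explicit geometric-arithmetic interpolation between $1/\beta$ and $1/L$ visible in the statement.
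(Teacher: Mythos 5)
Your proof is correct and follows essentially the same route as the paper: both arguments bound $R(P_tf)\gr e^{2t}\beta(t)/L$ via the commutation $\partial_i(P_tf)=e^{-t}P_t(\partial_if)$, the $L_1$-contractivity of $P_t$ and the Poincar\'e inequality, then feed this into Corollary \ref{cor:bd-s-R} (with $\alpha=1$) and Lemma \ref{lem:ou-2} to obtain a linear first-order differential inequality solved with the integrating factor $e^{t}$. The only cosmetic difference is that you work with $u(t)=\log(1/\beta(t))$ while the paper uses $\psi(t)=2\int_0^t s(z)^{-1}\,dz=u(t)-u(0)$, which leads to the identical bound.
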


\noindent {\it Proof.} To ease the notation we set $\beta(t)=\beta(P_tf)$ and $\beta=\beta(f)$. First, note that Lemma \ref{lem:ou-2} yields
\begin{align} \label{eq:key-1}
\frac{1}{\beta(t)} =\frac{1}{\beta} \exp(\psi(t)), \quad \psi(t):= 2 \int_0^t \frac{dz}{s(z)}.
\end{align} Next, we employ Corollary \ref{cor:bd-s-R} to link $s(t)$ with $\beta(t)$. To this end, it suffices to bound from below the 
parameter $R(t)=R(P_tf)$. Indeed; we have 
\begin{align*}
R(t) = \frac{\mathbb E\|\nabla (P_tf) (G)\|_2^2}{ \sum_j \|\partial_j (P_tf)\|_{L^1}^2 } \gr
\frac{\mathbb E \|\nabla (P_tf)(G)\|_2^2}{e^{-2t} \sum_j \|\partial_j f\|_{L^1}^2} \gr
\frac{ e^{2t} {\rm Var}[P_tf (G)]}{  L(\mathbb Ef (G))^2} = e^{2t} \beta(t)/L,
\end{align*} where we have also used Lemma \ref{lem:ou-1} and the Poincar\'{e} inequality \eqref{eq:Poin}. Whence, we obtain
\begin{align} \label{eq:key-2}
\psi'(t) = \frac{2}{s(t)} \gr  \alpha \log( e^{2t} \beta(t)/L).
\end{align}
Inserting \eqref{eq:key-1} into \eqref{eq:key-2} we obtain
\begin{align*}
\psi' (t) \gr \alpha \log\left( \frac{\beta}{L} e^{2t-\psi(t) }\right) \Longrightarrow \psi'(t) + \alpha \psi(t) \gr 2 \alpha t+ \alpha \log(\beta/L). 
\end{align*} Integrating the above in $[0,t]$ we find
\begin{align*}
e^{\alpha t} \psi(t) = \int_0^t e^{\alpha z} \left( \psi'(z) + \alpha \psi(z) \right) \, dz &\gr \int_0^t e^{\alpha z} \left( 2\alpha z+\alpha \log(\beta/L) \right) \, dz \\
& = 2te^{\alpha t} - \frac{2}{\alpha}(e^{\alpha t}-1) + (e^{\alpha t} -1) \log ( \beta /L) \\
&= 2te^{\alpha t} + (e^{\alpha t} -1) \log ( e^{-2/\alpha} \beta /L).
\end{align*} It follows that
\begin{align*}
\psi(t) \gr 2t + (1-e^{-\alpha t}) \log(e^{-2/\alpha} \beta /L), \quad t\gr 0. 
\end{align*}
Plug the latter in \eqref{eq:key-1} we arrive at
\begin{align*}
\frac{1}{\beta(t)} \gr \frac{e^{2t}}{\beta} \left( \frac{\beta}{ e^{2/\alpha} L} \right)^{1-e^{-\alpha t}}  \; \Longrightarrow  \;
\frac{1}{\beta(t)} \gr e^{2t} \left( \frac{1}{\beta} \right)^{e^{-\alpha t}} \left( \frac{\beta}{ e^{2/\alpha} L} \right)^{1-e^{-\alpha t}} ,
\end{align*}
as required. \prend

\medskip

Now we turn to proving our second main result.

\medskip

\noindent {\it Proof of Theorem \ref{thm:super-sb}.} The proof follows the same lines as before: For $\delta\in (0,1/2)$ and $t>0$ we have
\begin{align*}
\mathbb P \left\{ f(G) \ls \delta \mathbb E f(G) \right\} \ls \exp \left( -c \varepsilon(t)^2 /\beta(t) \right),
\end{align*} provided that 
\begin{align*}
\varepsilon(t):=1-\delta e^{-t} -\sqrt{1-e^{-2t}} = e^{-t} \left( \frac{e^{-t}}{1+\sqrt{1-e^{-2t}}} -\delta \right) >0.
\end{align*} Selecting time $t_0=t_0(\delta)>0$ such that 
\begin{align*}
\frac{e^{-t_0}}{1+\sqrt{1-e^{-2t_0}}}= 2\delta,
\end{align*} and taking into account Proposition \ref{prop:key}, we conclude 
with $\omega(\delta) = \exp(-\alpha t_0) \asymp \delta^\alpha$. \prend

\begin{remark} \label{rem:opt-Tal}
	The above argument demonstrates that the optimal constant $\alpha_{\rm opt}$ in inequality \eqref{eq:bd-s-R-2} 
	is directly
	connected with the dependence of $\delta$ in the small ball estimate established in Theorem \ref{thm:super-sb}. 
	We show below that $\alpha_{\rm opt}\gr 1$ and an a posteriori examination shows that $\alpha_{\rm opt}\ls 2$, at least
	in the case of even functions. If happens $\alpha_{\rm opt}=2$, then Theorem \ref{thm:hyper-sb} is of course redundant. However, 
	we are not aware of the value $\alpha_{\rm opt}$ as of this writing. Until this interesting question is clarified the two results 
	are incomparable as we have already explained in the Introduction.  
\end{remark}

\medskip

\noindent {\it Proof of Corollary \ref{cor:bd-s-R} } (with $\alpha=1$). Recall estimate \eqref{eq:ineq-grad},
\begin{align*} 
\mathbb E\|\nabla (P_tf)\|_2^2 \ls e^{-2t} \mathbb E\|\nabla f\|_2^2  R(f)^{- \tanh t}, \quad t\gr 0.
\end{align*} Integrating the latter and taking into account the fact that
\begin{align*}
{\rm Var}[f(G)]= v(0)= - \int_0^\infty v'(t) \, dt = 2 \int_0^\infty   \mathbb E\|\nabla (P_tf)\|_2^2 \, dt ,
\end{align*} which follows from Lemma \ref{lem:ou-1}, we get
\[
s(f) \ls 2\int_0^\infty e^{-2t} R(f)^{-\tanh t} \, dt  = 2 \int_0^\infty e^{-2t} e^{-x \tanh t} \, dt,
\] where $x:=\log R(f) \gr 0$. The following easy fact from calculus completes the proof:

\smallskip

\noindent {\it Fact.} For $x>0$ one has
\[
J(x) := \int_0^\infty e^{-2t} e^{-x \tanh t}\, dt < \frac{1}{x}, \qquad J(x)\sim 1/x, \quad x\to \infty.
\] 

\smallskip

\noindent {\it Proof of Fact.} Apply the change of variable $z= \tanh t$ to get
\[
J(x) = \int_0^1 \frac{1}{(1+z)^2} e^{-xz} \, dz .
\]
Integration by parts yields
\[
xJ(x) = 1 -\frac{e^{-x}}{4} - 2\int_0^1 \frac{e^{-xz}}{(1+z)^3} \, dz < 1.
\] Moreover, $xJ(x) \sim 1$ as $x\to \infty$ which shows that the universal constant $\alpha=1$ we obtain, 
in terms of the integral $J(x)$, is asymptotically optimal. \prend

\bigskip

%\newpage

%%%%%%%%%%%%%%%%%%%%%%%%%%%%%
\section{Applications to asymptotic geometric analysis}
%%%%%%%%%%%%%%%%%%%%%%%%%%%%%

In this section we apply Theorem~\ref{thm:hyper-sb} to derive optimal small ball estimates 
in normed spaces in terms of the underlying dimension. 
Since our study takes into account the unconditional structure of the norm, both explicitly and implicitly, 
we begin with some auxiliary results in this context.
Our approach blends with analytic techniques hence, in several instances, the norm under study is required 
to be smooth enough. In order to ease the exposition we assume throughout the section, without loss of generality, 
that the norms are sufficiently smooth; the general case follows by a standard approximation argument, 
as described e.g., in \cite{PV-dicho}. Let us point out that in the case of unconditional norms the smooth approximation
can be arranged in order to preserve the unconditionality. 

%%%%%%%%%%%%%%%%%%%
\subsection{Unconditional structure}
%%%%%%%%%%%%%%%%%%%

Let $\|\cdot\|$ be a norm on $\mathbb R^n$ and let $(b_i)$ be a basis. Following \cite{FJ}, 
we define the unconditional constant of the norm with respect to the basis, denoted by  ${\rm unc}(\|\cdot\|, \{b_i\})$, to be the least $r>0$ such that
\begin{align}
\left\| \sum_{i=1}^n \varepsilon_i \alpha_i b_i\right\| \ls r \left\| \sum_{i=1}^n \alpha_i b_i \right\| ,
\end{align} for all choices of signs $\varepsilon_i = \pm 1$ and all scalars $(\alpha_i) \subset \mathbb R$. Note that 
${\rm unc}(\|\cdot\|, \{b_i\}) \gr 1$ for any basis $\{b_i\}$. Next, one defines 
\begin{align}
{\rm unc}(\mathbb R^n, \|\cdot\|) := \inf \left\{ {\rm unc}(\|\cdot\|, \{b_i\}) : (b_i) \; \textrm{basis} \right\}.
\end{align}
We denote by $(e_i)$ the standard (orthonormal) basis in $\mathbb R^n$.

\medskip

 The following lemma can be viewed as a Lozanovski type result (see e.g. \cite{Sz-volr} or \cite{Pis-book}):
 
 \begin{lemma} \label{lem:prop-bal-pos} 
 Let $\|\cdot\|$ be a norm on $\mathbb R^n$, let $a_i= \mathbb E| \partial_i \|G\| |$ for $i=1,\ldots,n$ and let $r={\rm unc}(\|\cdot\|, \{e_i\})$. 
 \begin{enumerate}
 \item For every $x\in \mathbb R^n$, we have
 \begin{align} \label{eq:bal-pos-1}
\frac{1}{r} \sum_{i=1}^n a_i |x_i| \ls \|x\| \ls r \sqrt{ \frac{\pi}{2} } \mathbb E\|G\| \cdot \|x\|_\infty.
 \end{align}
 \item The following estimate holds:
 \begin{align} \label{eq:bal-pos-2}
 c \frac{ \mathbb E\|G\|}{\sqrt{\log n} } \ls \sum_{i=1}^n a_i \ls r \sqrt{\frac{\pi}{2}} \mathbb E\|G\|.
 \end{align} In particular, 
 \begin{align*}
 \frac{c'}{n \log n} \ls \frac{\sum_{i=1}^n a_i^2}{(\mathbb E\|G\|)^2} .
 \end{align*}
 \item Assuming that $a_i=a_j$ for all $i,j=1,2,\ldots,n$,  we have the following  estimate:
 \begin{align} \label{31-part3}
 {\rm vol}_n(B_X)^{1/n} \mathbb E\|G\| \ls C r \sqrt{\log n},
 \end{align} where $B_X = \{x\in \mathbb R^n : \|x\| \ls 1 \}$.
 \end{enumerate}
 \end{lemma}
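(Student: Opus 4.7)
\smallskip

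The plan is to exploit the subgradient inequality and unconditionality for (1), feed the output back into itself to get (2), and finally obtain (3) by a straightforward volumetric comparison.

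For the left inequality in (1), I will use that at a.e. point $G$ the norm is differentiable and, being convex and positively homogeneous, satisfies the subgradient inequality $\|y\| \gr \sum_j y_j \partial_j \|G\|$ for every $y\in\R^n$. Given a fixed $x$, apply this with $y_j = \varepsilon_j x_j$ where $\varepsilon_j = \operatorname{sign}(x_j\partial_j\|G\|)$ so that $\sum \varepsilon_j x_j \partial_j \|G\| = \sum|x_j|\,|\partial_j\|G\||$; unconditionality bounds the left-hand side by $r\|x\|$, and taking expectation in $G$ produces $\sum_j a_j|x_j|\ls r\|x\|$. For the right inequality, write any $x$ with $|x_j|\ls\|x\|_\infty$ as $\|x\|_\infty\cdot z$ with $z\in[-1,1]^n$, represent $z$ as $\Exp_\xi \xi$ for an independent Rademacher vector $\xi$ satisfying $\Exp\xi_j = z_j$, and combine convexity with unconditionality to get $\|z\|\ls\Exp\|\xi\|\ls r\|(1,\dots,1)\|$; a Jensen-type argument for $|g_j|$ (using unconditionality once more to compare $\|G\|$ and $\|(|g_j|)\|$) gives $\|(1,\dots,1)\|\lesssim r\sqrt{\pi/2}\,\Exp\|G\|$, which up to constants yields \eqref{eq:bal-pos-1}.

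For the upper bound in (2), apply the right inequality in (1) with $x$ equal to the random sign vector $(\operatorname{sign}(\partial_j\|G\|))_j$, whose sup-norm is $1$, and combine with the subgradient inequality $\sum|\partial_j\|G\||\ls \|\operatorname{sign}(\partial\|G\|)\|$, then take expectation in $G$. The lower bound is the main obstacle and is proved by truncation. Start from Euler's identity $\|G\|=\sum g_j\partial_j\|G\|$ (valid a.s.\ by positive homogeneity) and split according to whether $\|G\|_\infty$ exceeds a threshold $M= C\sqrt{\log n}$:
\begin{align*}
\Exp\|G\| &= \Exp[\|G\|\,\mathbf 1_{\|G\|_\infty\ls M}] + \Exp[\|G\|\,\mathbf 1_{\|G\|_\infty>M}] \\
&\ls M\Exp\textstyle\sum_j |\partial_j\|G\|| + r\sqrt{\pi/2}\,\Exp\|G\|\cdot\Exp[\|G\|_\infty\mathbf 1_{\|G\|_\infty>M}],
\end{align*}
where on the first event $|g_j|\ls M$ and on the second $\|G\|\ls r\sqrt{\pi/2}\Exp\|G\|\cdot\|G\|_\infty$ by part (1). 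Standard Gaussian tail estimates for $\|G\|_\infty$ make the last factor as small as $1/(2r\sqrt{\pi/2})$ once $C$ is large enough, so the second term is absorbed into the left-hand side and rearrangement gives $\sum_j a_j\gtrsim \Exp\|G\|/\sqrt{\log n}$. The consequence for $\sum a_j^2$ is immediate from Cauchy--Schwarz: $n\sum a_j^2\gr(\sum a_j)^2$.

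For (3), under the balance assumption $a_j\equiv a$, the left inequality in (1) reads $a\|x\|_1\ls r\|x\|$, i.e.\ $B_X\subseteq (r/a)B_1^n$, and Stirling's formula gives $|B_1^n|^{1/n}\asymp 1/n$, so $|B_X|^{1/n}\lesssim r/(na)$. On the other hand, the lower bound in (2) is equivalent to $\Exp\|G\|\ls C\sqrt{\log n}\cdot na$. Multiplying these two estimates the factor $na$ cancels and yields \eqref{31-part3}. The only subtle point is the truncation step in (2), where one must choose the threshold balancing the linear loss $M$ against the Gaussian tail of $\|G\|_\infty$; everything else is a routine application of subgradients and unconditionality.
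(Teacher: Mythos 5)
Your argument for the left inequality in \eqref{eq:bal-pos-1}, for the truncation proving the lower bound in \eqref{eq:bal-pos-2}, and for part (3) is essentially the paper's own proof (the paper likewise starts from Euler's identity $\|G\|=\sum_i g_i\partial_i\|G\|$, truncates on the event $\{\|G\|_\infty>C\sqrt{\log n}\}$, and obtains (3) from the inclusion $B_X\subseteq \frac{r}{a}B_1^n$ together with the left estimate in (2)). The one genuine problem is your proof of the right-hand inequality in \eqref{eq:bal-pos-1}: you invoke unconditionality twice --- once to pass from $\|\xi\|$ to $\|(1,\dots,1)\|$, and once more to compare $\|(|g_j|)_j\|$ with $\|G\|$ --- so what you actually prove is $\|x\|\ls r^2\sqrt{\pi/2}\,\Exp\|G\|\,\|x\|_\infty$. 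Dismissing this as ``up to constants'' is not legitimate: $r$ is not bounded (it can be as large as $\sqrt n$), and the lemma asserts a single factor of $r$. The correct route spends each tool exactly once: symmetrize, $\|x\|\ls r\,\Exp_\varepsilon\big\|\sum_i\varepsilon_i x_ie_i\big\|$; apply the contraction principle for Rademacher averages to get $\Exp_\varepsilon\big\|\sum_i\varepsilon_i x_ie_i\big\|\ls \|x\|_\infty\,\Exp_\varepsilon\big\|\sum_i\varepsilon_ie_i\big\|$; and then, instead of comparing $\|(|g_j|)_j\|$ with $\|G\|$ pointwise (which is where your second $r$ enters), use Jensen together with the distributional identity $(\varepsilon_i|g_i|)_i\overset{d}{=}(g_i)_i$ to conclude $\Exp_\varepsilon\big\|\sum_i\varepsilon_ie_i\big\|\ls(\Exp|g_1|)^{-1}\,\Exp\big\|\sum_i g_ie_i\big\|=\sqrt{\pi/2}\,\Exp\|G\|$.

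This slip propagates: your derivation of the upper bound in \eqref{eq:bal-pos-2} routes through the right inequality of (1) and therefore also inherits $r^2$. The paper gets that bound more cheaply by integrating the \emph{left} inequality of (1) against the Gaussian measure: $\frac1r\sum_i a_i\,\Exp|g_i|\ls\Exp\|G\|$ gives $\sum_i a_i\ls r\sqrt{\pi/2}\,\Exp\|G\|$ with no appeal to the upper estimate at all. Everything else in your write-up survives intact: the lower bound in (2), its Cauchy--Schwarz consequence, and part (3) use only the left inequality of (1), and the extra power of $r$ appearing in the tail term of your truncation is harmless since $r\ls\sqrt n$ lets the threshold $M=C\sqrt{\log n}$ absorb it.
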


\noindent {\it Proof.} (1). Note that for all $x\in\mathbb R^n$ and for (almost) every $y$ we have
\[
\sum_{i=1}^n x_i \cdot \partial_i\|y\| =\langle x, \nabla \|y\| \rangle  \ls \|x\|,  
\] where we used the fact that $\nabla \|y\|$ belongs to the unit sphere of the dual space (whenever is defined).
Applying the latter for $x_i\to x_i {\rm sgn} \{ x_i \partial_i\|y\| \}  \equiv \varepsilon_i x_i$, we obtain 
\begin{align*}
\sum_{i=1}^n |x_i| \cdot | \partial_i\|y\| | \ls \left\| \sum_{i=1}^n \varepsilon_i x_i e_i\right\| \ls r \|x\|,
\end{align*} for all $x$ and for (almost) every $y$. 
Integration over $y$ with respect to the Gaussian measure yields the lower estimate. For the upper estimate we may argue as follows:
\begin{align*}
\|x\| \ls r \mathbb E_\delta \left\| \sum_{i=1}^n \delta_i x_ie_i \right \| \ls r \|x\|_\infty \mathbb E_\delta \left\| \sum_{i=1}^n \delta_i e_i \right\|,
\end{align*} where $(\delta_i)$ are independent Rademacher and we have used the contraction principle \cite[Theorem 4.4]{LedTal-book}. 
Jensen's inequality and the fact that $(\delta_i |g_i|)$ have
the same distribution as $(g_i)$ (see e.g. \cite[Proposition 1]{Pis} or \cite[Lemma 4.5]{LedTal-book}) yields
\[ \mathbb E \left\| \sum_{i=1}^n \delta_i e_i \right\| \ls ( \mathbb E |g_1| )^{-1} \mathbb E \left\| \sum_{i=1}^n g_i e_i \right\|. \] The assertion follows.

\smallskip

\noindent (2). The rightmost inequality follows by integrating the left-hand side of \eqref{eq:bal-pos-1} with respect to $x$ and the Gaussian measure: 
If $G=(g_1, \ldots, g_n)$ is a standard Gaussian vector, then 
\[
\mathbb E|g_1| \cdot \sum_{i=1}^na_i = \mathbb E \left ( \sum_i^n a_i |g_i|  \right) \ls r\mathbb E\|G\|.
\] It remains to notice that $\mathbb E|g_1|=\sqrt{2/\pi}$. 

For the leftmost we argue as follows: Set $A=\{\|G\|_\infty> C \sqrt{\log n}\}$ and note that $\mathbb P(A)\ls  n^{-6}$ for a 
sufficiently large absolute constant $C>0$. Then, we may write
\begin{align*}
\mathbb E\|G\| = \mathbb E \left[  \sum_{i=1}^n g_i \partial_i \|G\| \right] \ls \mathbb E \left[ \|G\|_\infty \cdot \left\| \nabla \|G\| \right\|_1 \right]
\ls C\sqrt{\log n} \, \mathbb E \left \| \nabla \|G\| \right\|_1 + n \sqrt{\pi /2} \cdot \mathbb E\|G\| \cdot  \mathbb E[\|G\|_\infty \mathbf 1_A] ,
\end{align*} where we have also used the fact that
\footnote{Let $\|u\|_2=1$ such that $\max_{\|\theta\|_2=1}\|\theta\|=\|u\|$. By duality there exists
$v$ such that $\langle x,v\rangle \ls \|x\|$ for all $x$ and $\langle u, v \rangle =\|u\|$. 
On the other hand by the rotation invariance of the Gaussian measure we get 
$\mathbb E\|G\| \gr \mathbb E | \langle G, v \rangle | = \sqrt{2/ \pi} \|v\|_2 \gr \sqrt{2/ \pi} \|u\|$, as claimed.}

\[ 
|\partial_i \|G\| | \ls \| \nabla \|G\| \|_2 \ls \max_{\|\theta\|_2=1} \|\theta\| 
\ls \sqrt{\pi/2} \, \mathbb E\|G\|, \quad a.s. 
\] It remains to notice that 
\[
\mathbb E [ \|G\|_\infty \mathbf 1_A] \ls \sum_{i=1}^n \mathbb E[|g_i|\mathbf 1_A] \ls n \sqrt{\mathbb P(A)},
\] where in the last passage we have applied the Cauchy-Schwarz inequality and the standard fact that $\mathbb E g_i^2=1$.
Finally, the in particular part follows from the leftmost estimate and the Cauchy-Schwarz inequality.

\smallskip

\noindent (3). The left-hand side of \eqref{eq:bal-pos-1} implies that $B_X \subseteq \frac{r}{a} B_1^n$. 
Hence, taking volumes on both sides, we get
\begin{align*}
{\rm vol}_n(B_X)^{1/n} \ls \frac{r}{a} \frac{2}{(n!)^{1/n}} \ls \frac{2er}{an} \ls \frac{2er}{c \mathbb E\|G\|} \sqrt{\log n},
\end{align*} where in the last step we have used the leftmost estimate from \eqref{eq:bal-pos-2}. \prend

\begin{remark} \label{rem:3-1}
\begin{enumerate}
	\item Both lower and upper estimates in \eqref{eq:bal-pos-2} are sharp (up to constants) in the case of $\ell_\infty^n$ and $\ell_1^n$-norm, respectively. 
	
	\item Note that H\"older's inequality implies that ${\rm vol}( B_X )^{1/n} \mathbb E\|G\| \gr c$. Indeed; we may write
	\[
	\mathbb E\|G\| = \mathbb E\|G\|_2 \int_{S^{n-1}} \|\theta\| \, d\sigma(\theta),
	\] where $\sigma(\cdot)$ is the uniform probability measure on the unit Euclidean sphere $S^{n-1}=\{\theta \in \mathbb R^n: \|\theta\|_2=1\}$.
	By H\"older's inequality we get 
	\[
	\int_{S^{n-1}} \|\theta\| \, d\sigma(\theta) \gr \left( \int_{S^{n-1}} \|\theta\|^{-n} \, d\sigma(\theta) \right)^{-1/n}.
	\] On the other hand, integration in polar coordinates yields ${\rm vol}_n(B_X) = n \omega_n \int_{S^{n-1}} \|\theta \|^{-n} \, d\sigma(\theta),$ 
	 where $\omega_n$ is the volume of the Euclidean ball $B_2^n = \{x \in \mathbb R^n: \|x\|_2\ls 1\}$.
	 It remains to notice that $\mathbb E\|G\|_2 \asymp \omega_n^{1/n} \asymp \sqrt{n}$. 
	 Thus, \eqref{31-part3} should be viewed as a reverse H\"older estimate.
	
	\item In the sequel only the estimate \eqref{eq:bal-pos-1} and \eqref{eq:bal-pos-2} will be used. 
	However, we state the lemma in this form for the sake of completeness and for future reference.
\end{enumerate}
\end{remark}

\begin{theorem} \label{thm:opt-unc}
Let $\|\cdot\|$ be a norm in $\mathbb R^n$ which satisfies $\mathbb E|\partial_i\|G\| | = \mathbb E|\partial_j\|G\| |$ 
for all $i,j=1,2,\ldots,n$. If $r={\rm unc}( \|\cdot \|, \{e_i\})$, then for any $\delta\in (0,1/2)$ we have
\begin{align}
\mathbb P \left\{ \|G\| \ls \delta \mathbb E\|G\| \right\} < 
\exp \left( - c \left( \frac{1}{\tilde \beta} \right)^{\tau(\delta)} \left( \frac{n}{r^2} \right)^{1-\tau(\delta)}  \right),
\end{align} with $\tau(\delta) \asymp \delta^2$, where $\tilde \beta = \tilde \beta( \|\cdot\|)$.
\end{theorem}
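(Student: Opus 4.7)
The statement has exactly the structure of Theorem~\ref{thm:hyper-sb} applied to $f(\cdot)=\|\cdot\|$, so the plan is to verify the hypothesis and identify the correct value of $L$, and then invoke that theorem directly. The function $f=\|\cdot\|$ is nonnegative, convex, positively homogeneous, and $\|\nabla f\|_2 \le 1$ almost everywhere so $\|\nabla f\|_2\in L_2(\gamma_n)$. The only thing to check is the ``$L_1$-balancing'' condition
\[
\sum_{j=1}^n \bigl(\mathbb E|\partial_j \|G\|\,|\bigr)^2 \;\le\; L\,(\mathbb E\|G\|)^2
\]
with the right choice of $L$, which in our setting will be $L \asymp r^2/n$.

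\textbf{The key estimate.} Write $a_j=\mathbb E|\partial_j\|G\|\,|$. By the hypothesis all $a_j$ are equal to a common value $a>0$. Lemma~\ref{lem:prop-bal-pos}(2) gives the bound
\[
\sum_{j=1}^n a_j \;\le\; r\sqrt{\pi/2}\,\mathbb E\|G\|,
\]
hence $a\le Cr\,\mathbb E\|G\|/n$. Squaring and summing,
\[
\sum_{j=1}^n a_j^2 \;=\; n a^2 \;\le\; \frac{C^2 r^2}{n}\,(\mathbb E\|G\|)^2,
\]
so the hypothesis of Theorem~\ref{thm:hyper-sb} is satisfied with $L=C^2 r^2/n$, i.e. $1/L \ge c\,n/r^2$.

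\textbf{Conclusion.} Plugging this into Theorem~\ref{thm:hyper-sb} yields, for every $\delta\in(0,1/2)$,
\[
\mathbb P\{\|G\|\le \delta\,\mathbb E\|G\|\} \;\le\; \exp\!\left\{-c\delta^2 \Bigl(\tfrac{1}{\tilde\beta}\Bigr)^{\tau(\delta)} \Bigl(\tfrac{n}{r^2}\Bigr)^{1-\tau(\delta)}\right\},
\]
with $\tau(\delta)\asymp \delta^2$. The prefactor $\delta^2$ is then absorbed by a harmless perturbation of the exponent $\tau(\delta)$ (it amounts to an additive $O(\log(1/\delta))$ in the exponent, which can be compensated by slightly enlarging $\tau$ while keeping $\tau\asymp\delta^2$), yielding the claimed bound with the form stated in the theorem.

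\textbf{Anticipated difficulty.} There is essentially no serious obstacle: the entire proof is a verification of the hypothesis of Theorem~\ref{thm:hyper-sb} using the equal-$L_1$-norm assumption, with the unconditional constant $r$ controlling the ratio between $\sum_j a_j$ and $\mathbb E\|G\|$ via Lemma~\ref{lem:prop-bal-pos}(2). The only mildly technical point is cosmetic—reconciling the $\delta^2$ prefactor coming from Theorem~\ref{thm:hyper-sb} with the prefactor-free form stated here, which is achieved by the $O(\delta^2)$ flexibility already built into $\tau(\delta)$.
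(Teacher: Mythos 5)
Your verification of the hypothesis of Theorem~\ref{thm:hyper-sb} is fine: with all $a_j=\mathbb E|\partial_j\|G\||$ equal, Lemma~\ref{lem:prop-bal-pos}(2) indeed gives $\sum_j a_j^2=na^2\ls C r^2(\mathbb E\|G\|)^2/n$, so one may take $L\asymp r^2/n$. The gap is in the last step: the prefactor $\delta^2$ cannot be absorbed by perturbing $\tau(\delta)$. To trade the multiplicative factor $\delta^2$ for a shift $\tau\to\tau'$ you would need
$\bigl(\tfrac{1}{\tilde\beta}\bigr)^{\tau'}\bigl(\tfrac{n}{r^2}\bigr)^{1-\tau'}\ls C\,\delta^2\bigl(\tfrac{1}{\tilde\beta}\bigr)^{\tau}\bigl(\tfrac{n}{r^2}\bigr)^{1-\tau}$, i.e. $|\tau'-\tau|\cdot\bigl|\log\bigl(\tilde\beta\, n/r^2\bigr)\bigr|\gr 2\log(1/\delta)-\log C$. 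Since $|\tau'-\tau|\lesssim\delta^2$, this forces $|\log(\tilde\beta\,n/r^2)|\gtrsim\delta^{-2}\log(1/\delta)$, which has no reason to hold; when $1/\tilde\beta$ and $n/r^2$ are comparable (e.g.\ both of order $\sqrt n$ with $\delta$ small) your bound is genuinely weaker than the claimed one by the factor $\delta^2$ in the exponent, and no admissible choice of $\tau'\asymp\delta^2$ repairs it.

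The paper closes this gap differently: it keeps the \emph{exact} value $L=\sum_i a_i^2/(\mathbb E\|G\|)^2$ and splits into two cases. If $\delta\gr\frac{1}{10r}\sqrt{Ln}$, then $1/L\gr\frac{n}{100 r^2\delta^2}$, so $(1/L)^{1-\tau}$ carries an extra factor $\delta^{-2(1-\tau)}$ that cancels the prefactor up to $\delta^{2\tau}=\delta^{O(\delta^2)}\gr c$. If instead $\delta<\frac{1}{10r}\sqrt{Ln}$, then Lemma~\ref{lem:prop-bal-pos}(1) (in the form $a\|x\|_1\ls r\|x\|$ with $a=\mathbb E\|G\|\sqrt{L/n}$) gives the inclusion $\{\|x\|\ls\delta\mathbb E\|G\|\}\subset\{\|x\|_1\ls n/10\}$, whose Gaussian measure is at most $e^{-cn}$, and this dominates any bound of the required form. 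You should replace your absorption step by this (or an equivalent) dichotomy.
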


\noindent {\it Proof.} By Theorem \ref{thm:hyper-sb} we get
\begin{align*}
\mathbb P \left\{ \|G\| \ls \delta \mathbb E\|G\| \right\} < \exp \left( - c \delta^2 (1/\tilde \beta)^{\tau(\delta)} (1/L)^{1-\tau(\delta)} \right),
\end{align*} where $L$ is given by
\begin{align*}
L = \frac{\sum_{i=1}^n (\mathbb E | \partial_i \|G\| |)^2 }{(\mathbb E\|G\|)^2}.
\end{align*}
We distinguish two cases.

\begin{itemize}
\item $\delta \gr \frac{1}{10 r} \sqrt{Ln}$. Then, $1/L \gr \frac{n}{100 r^2 \delta^2}$ and thus,
\begin{align*}
\delta^2 (1/L)^{1-\tau(\delta)} \gr c' (n/r^2)^{1-\tau(\delta) }. 
\end{align*}

\item $\delta< \frac{1}{10 r} \sqrt{Ln}$. Note that by Lemma \ref{lem:prop-bal-pos} we have
\begin{align*}
a \|x\|_1 \ls r \|x\|, \quad a=\mathbb E| \partial_i \|G\| | = \mathbb E\|G\| \sqrt{\frac{L}{n}},
\end{align*} 
by the definition of $L$ and the equality for the partial derivatives of the 
norm. Therefore, we obtain
\begin{align*}
\{x: \|x\| \ls \delta \mathbb E\|G\|\} \subset \left \{ x: \|x\|_1 \ls \frac{\delta r}{a} \mathbb E\|G\|  \right\} 
\subset \left\{x: \|x\|_1 \ls \frac{n}{10}  \right\},
\end{align*} where for the last inclusion we have used the assumption on $\delta$. This inclusion yields
$\mathbb P\{ \|G\| \ls \delta \mathbb E\|G\| \} < e^{-cn}$. Note that $r\gr 1$ and 
in view of \eqref{eq:beta-comp} and \eqref{eq:beta-magni} it is also $\tilde\beta \gtrsim 1/n$, hence we obtain 
\[
\left(\frac{1}{\tilde \beta} \right)^{\tau(\delta)} \left( \frac{n}{r^2}\right)^{1-\tau(\delta)} \ls ( C_1n)^{\tau(\delta)} n^{1-\tau(\delta)} \ls C_2n,
\] as required.
\end{itemize} 
In each case we get the desired result. \prend

\medskip

The following is immediate corollary of Theorem \ref{thm:opt-unc}:

\begin{theorem}\label{thm:1-unc}
Let $\|\cdot\|$ be a 1-unconditional norm on $\mathbb R^n$, which satisfies $\mathbb E|\partial_i\|G\| | = \mathbb E|\partial_j\|G\| |$ 
for all $i,j=1,2,\ldots,n$. Then, for any $\delta\in (0,1/2)$ we have
\begin{align}
\mathbb P \left\{ \|G\| \ls \delta \mathbb E\|G\| \right\} < \exp \left( - c n^{1-C\delta^2}  \right),
\end{align} where $c,C>0$ are universal constants.
\end{theorem}

\noindent {\it Proof.} It suffices to notice that $\tilde \beta \lesssim 1$ in view of \eqref{eq:beta-comp} and \eqref{eq:beta-magni}. \prend

%%%%%%%%%%%%%%%%%%%%%%%%%%%%%%%%%%%%%%%%%%%%%%%%%%%%%%%%%%
\subsection{$1$--unconditional norms in the $\ell$-position and position of minimal $M$}\label{subs: lm position}
%%%%%%%%%%%%%%%%%%%%%%%%%%%%%%%%%%%%%%%%%%%%%%%%%%%%%%%%%%

Theorem \ref{thm:hyper-sb} allowed to prove optimal small-ball estimates for $1$--unconditional norms
in the position where absolute first gaussian moments of partial derivatives of the norm are all equal (Theorem~\ref{thm:1-unc}).

The ``$\ell$-position" is a classical position in local theory of Banach spaces which played a crucial role for the development 
of the theory, see e.g. \cite{Pis-book}. This position, as in the case of $w^{1,1}$-position admits a description via an isotropic condition, 
namely a norm $\|\cdot\|$ on $\mathbb R^n$ is in $\ell$-position if 
\[
\mathbb E (G_i  \, \partial_i\|G\| \, \|G\| )= \frac{\mathbb E [\|G\|^2]}{n}, \quad i=1,2,\ldots, n.
\]
A closely related position is the ``position of minimal $M$". Similarly, we have that a norm $\|\cdot\|$ on $\mathbb R^n$
is in minimal $M$-position if satisfies
\begin{align} \label{eq:M-iso}
\mathbb E (G_i \, \partial_i \|G\| ) = \frac{\mathbb E\|G\|}{n}, \quad i=1,2,\ldots,n.
\end{align} For further details and how one obtains the aforementioned isotropic characterization we refer the reader to \cite{GM}.

In the $\ell$ or minimal $M$ positions, a direct adaptation of the argument
will lead to a weaker estimate for small $\delta$
``close'' to $\log^{-1/2} n$. The reason is that partial derivatives of the norm in the $\ell$-- (or in minimal $M$--) position
are not necessarily equal and can vary up to the
factor $O(\log^{1/2} n)$. 
Indeed, in that setting we rely on the identities $\Exp(G_i\,\partial_i\|G\|)=\Exp(G_j\,\partial_j\|G\|)$
for all $i\neq j$ (in the position of minimal $M$) and $\Exp(G_i\,\partial_i\|G\|\,\|G\|)=\Exp(G_j\,\partial_j\|G\|\,\|G\|)$
(in the $\ell$--position) which imply $\Exp|\partial_i\|G\||\ls C\sqrt{\log n}\,\Exp|\partial_j\|G\||$
for all $i\neq j$. 
\footnote{In view of the unconditionality we have $x_i \partial_i \|x\| = |x_i \partial_i \|x\| |$ for all
$i=1,2,\ldots, n$ and for all $x$ and $|\partial_i \|x\| |$ is increasing function of $|x_i|$. Hence, by Chebyshev's 
association inequality 
we get
\[
\mathbb E[G_i \partial_i \|G\|] = \mathbb E \left[ |G_i| \cdot | \partial_i \|G\| |  \right] \gr \mathbb E|G_i| \cdot \mathbb E| \partial_i \|G\||.
\] On the other hand, an argument similar to the one for proving \eqref{eq:bal-pos-2} shows that 
\[
\mathbb E|G_i \partial_i \|G\|| \ls C \sqrt{\log n} \, \mathbb E |\partial_i \|G\| | + \frac{c}{n^2} \mathbb E\|G\|.
\] The claim follows if we take into account the balancing condition \eqref{eq:M-iso}.
}
Thus, considering a direct sum of $\ell_2$ and $\ell_\infty$ (of appropriate dimensions),
one can check that the inequalities are in general optimal up to the constant multiple $C$ (see also Remark \ref{rem:3-1}). 
We leave the details to the interested reader.

In this subsection, we prove the small-ball inequality for $1$--unconditional norms
in position of minimal $M$, which matches the estimate in Theorem~\ref{thm:1-unc}.
For this, we augment the above argument, based on Theorem~\ref{thm:hyper-sb},
with specially constructed norm replacement.
The proof can be repeated for the $\ell$--position with minor modifications,
so we will provide the details only for the former.

\begin{theorem}\label{p: optimal in M}
Let $\|\cdot\|$ be a $1$--unconditional norm in the position of minimal $M$.
Then for any $\delta\in(0,1/2]$ we have
$$\Prob\big\{\|G\|\ls \delta\Exp\|G\|\big\}\ls \exp(-c n^{1-C\delta^2}),$$
where $c,C$ are universal constants.
\end{theorem}

The main technical step of the proof is to construct (for every fixed $\delta$) a seminorm $\mathcal T={\mathcal T}_\delta$ satisfying three
conditions: 
\begin{itemize}
\item $\Exp\mathcal T(G)\gr c\,\Exp\|G\|$, 
\item $\mathcal T(x)\ls C\|x\|$ for all $x\in\R^n$, and
\item 
$\frac{\sum_{i=1}^n (\Exp | \partial_i \mathcal T(G) |)^2 }{(\Exp \mathcal T(G))^2}
\ls C\delta^2/n$. 
\end{itemize}
The first two conditions immediately yield
$$\Prob\big\{\|G\|\ls \delta\Exp\|G\|\big\}\ls \Prob\big\{\mathcal T(G)\ls C'\delta\,\Exp\mathcal T(G)\big\},$$
while the third condition, together with Theorem~\ref{thm:hyper-sb}, implies the desired
deviation estimate. The seminorm is constructed in Proposition~\ref{p: d2} as a composition
of three mappings: a diagonal contraction $\widetilde D$, an auxiliary seminorm $\mathcal U(\cdot)$
and a mapping $F_{\cdot,\cdot}(\cdot)$ defined below.
Each of the mappings from the composition is responsible for particular structural properties of $\mathcal T(\cdot)$:
$\widetilde D$ is needed to ``balance'' the quantities $\Exp G_i\partial_i\mathcal T(G)$,
the auxiliary seminorm $\mathcal U(\cdot)$ provides an upper bound on the $\ell_1^n$--norm of norming functionals,
and $F_{\cdot,\cdot}(\cdot)$ controls the size of the support of the norming functionals,
which, together with the estimate for $\Exp G_i\partial_i\mathcal T(G)$,
implies the upper bound for $\sum_{i=1}^n (\Exp | \partial_i \mathcal T(G) |)^2$.

\medskip

Everywhere in this subsection, by a {\it norming functional} for a seminorm $\|\cdot\|$ in $\R^n$
we mean any vector $x\in\R^n$ such that $\sup\limits_{\|y\|\ls 1}\langle y,x\rangle=1$
and $\langle y,x\rangle=0$ for all $y$ with $\|y\|=0$.

As the first step of the proof, we define a mapping $F$ on the class of all $1$--unconditional seminorms in $\R^n$.
\begin{definition}
Let $\|\cdot\|$ be a $1$--unconditional seminorm, and fix a parameter $\tau\gr 1$.
We will define a new $1$--unconditional seminorm $F_{\|\cdot\|,\tau}(\cdot)$ as follows.
Take any norming functional $x$ for the seminorm $\|\cdot\|$.
We associate with $x$ a collection of vectors $\{v(x,I)\}_{I}\subset\R^n$ indexed over all
subsets $I\subset[n]$, and with each $v(x,I)$ defined by
$$v(x,I):=\Big(1+\frac{\|x\,{\bf 1}_I\|_{\ell_1^n}}{\tau\|x\|_{\ell_1^n}}\Big)x\,{\bf 1}_{[n]\setminus I},$$
where ${\bf 1}_{J}$ denotes the indicator of a subset of indices $J$. 
In a sense, we truncate and rescale the original functional $x$. 
Now, set
$$F_{\|\cdot\|,\tau}(y):=\sup\limits_{x}\sup\limits_{I\subset[n]}\langle v(x,I),y\rangle,\quad y\in\R^n,$$
where the supremum is taken over all norming functionals $x$ for the seminorm $\|\cdot\|$.
\end{definition}
It is immediately clear that $F_{\|\cdot\|,\tau}(\cdot)$ is a $1$--unconditional seminorm.
Another elementary observation is

\begin{lemma}\label{l: norm trick 1}
For any $1$--unconditional seminorm $\|\cdot\|$ and $\tau\gr 1$, the seminorm $F_{\|\cdot\|,\tau}(\cdot)$ satisfies
$$\|y\|\ls F_{\|\cdot\|,\tau}(y)\ls \Big(1+\frac{1}{\tau}\Big)\|y\|,\quad \forall\,y\in\R^n.$$
\end{lemma}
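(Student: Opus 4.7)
The plan is to prove the two inequalities by directly unpacking the definition of $F_{\|\cdot\|,\tau}(\cdot)$ and exploiting $1$--unconditionality at one crucial point. Both bounds should be essentially immediate once the right choice of index set $I$ (for the lower bound) and the right contraction principle (for the upper bound) are identified.

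For the lower bound $\|y\|\ls F_{\|\cdot\|,\tau}(y)$, I would simply pick $I=\emptyset$ in the supremum. Then $\|x\mathbf{1}_\emptyset\|_{\ell_1^n}=0$ and $x\mathbf{1}_{[n]\setminus\emptyset}=x$, so $v(x,\emptyset)=x$. Since $\|y\|=\sup_x\langle x,y\rangle$ over all norming functionals (which holds for a seminorm once we quotient out by its kernel, and is ensured here by the orthogonality condition in the definition of a norming functional), the supremum over $x$ with $I=\emptyset$ already recovers $\|y\|$, hence $F_{\|\cdot\|,\tau}(y)\gr\|y\|$.

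For the upper bound, fix any norming functional $x$ and any $I\subset[n]$, and estimate
\[
\langle v(x,I),y\rangle=\Big(1+\tfrac{\|x\mathbf{1}_I\|_{\ell_1^n}}{\tau\|x\|_{\ell_1^n}}\Big)\langle x\mathbf{1}_{[n]\setminus I},y\rangle.
\]
The prefactor is obviously bounded by $1+1/\tau$ since $\|x\mathbf{1}_I\|_{\ell_1^n}\ls\|x\|_{\ell_1^n}$, so it remains to show $\langle x\mathbf{1}_{[n]\setminus I},y\rangle\ls\|y\|$. This is where $1$--unconditionality enters: I would argue that the coordinate projection $z\mapsto z\mathbf{1}_J$ is a contraction for the dual norm $\|\cdot\|_*$ when $\|\cdot\|$ is $1$--unconditional. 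The standard way to see this is the averaging identity
\[
z\mathbf{1}_J=\tfrac{1}{2}\bigl(z+\varepsilon\cdot z\bigr),
\]
with $\varepsilon_i=1$ for $i\in J$ and $\varepsilon_i=-1$ otherwise (componentwise multiplication), combined with the fact that $1$--unconditionality of $\|\cdot\|$ passes to $\|\cdot\|_*$ (by duality of the sign-flip isometry). Thus $x\mathbf{1}_{[n]\setminus I}$ has dual norm at most $\|x\|_*=1$, yielding $\langle x\mathbf{1}_{[n]\setminus I},y\rangle\ls\|y\|$.

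Putting the two estimates together gives $\langle v(x,I),y\rangle\ls(1+1/\tau)\|y\|$ uniformly over all norming functionals $x$ and all $I\subset[n]$, and taking the supremum delivers the upper bound. The only conceptual step is the contraction property of coordinate restriction on the dual, which I would state as a short preliminary remark; everything else is bookkeeping. I do not foresee a genuine obstacle — the whole point of the construction of $v(x,I)$ is that the rescaling factor $1+\|x\mathbf{1}_I\|_{\ell_1^n}/(\tau\|x\|_{\ell_1^n})$ exactly compensates for the trivial reduction in mass produced by the truncation $\mathbf{1}_{[n]\setminus I}$, up to the multiplicative loss $(1+1/\tau)$ which appears in the statement.
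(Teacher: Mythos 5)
Your argument is correct and is exactly the "elementary observation" the paper has in mind (the paper states the lemma without proof): take $I=\emptyset$ to recover $\|y\|$ from below, and bound the prefactor by $1+1/\tau$ while using that coordinate projections contract a $1$--unconditional (semi)norm — equivalently its dual — to get $\langle x\,{\bf 1}_{[n]\setminus I},y\rangle\ls\|y\|$. No gaps; the only point worth making explicit is that a norming functional is nonzero, so the denominator $\|x\|_{\ell_1^n}$ in the definition of $v(x,I)$ does not vanish.
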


\noindent {\it Proof.} For the lower estimate note that $\{v(x,\emptyset)\}$ 
is the collection of the original norming functionals. On the other hand, the unconditionality implies coordinatewise
monotonicity, hence $\| x\mathbf 1_{[n]\setminus I}\|_\ast \ls \|x\|_\ast$ for any $I\subset [n]$ and $x$, where
$\|\cdot\|_\ast$ stands for the dual norm of $\|\cdot\|$. \prend

The following is a crucial property that says that norming functionals for $F_{\|\cdot\|,\tau}(\cdot)$
are supported on the sets of large coordinates of corresponding vectors.
\begin{lemma}\label{l: F transform}
Let $K>0$ be a parameter, and let
$\|\cdot\|$ be a $1$--unconditional seminorm such that every norming functional for $\|\cdot\|$ has $\ell_1^n$--norm
at most $K$. Further, let $\tau\gr 1$ and let $F_{\|\cdot\|,\tau}(\cdot)$ be as above.
Take a vector $y\in\R^n$ with $\|y\|\neq 0$ and let $\widetilde x$ be a norming functional with respect to the
seminorm $F_{\|\cdot\|,\tau}(\cdot)$, such that $\langle y,\widetilde x\rangle=F_{\|\cdot\|,\tau}(y)$. Then necessarily
$$\supp\,\widetilde x\subset\Big\{i\ls n:\;|y_i|\gr \frac{\tau\|y\|}{(\tau+1)^2 K}\Big\}.$$ 
\end{lemma}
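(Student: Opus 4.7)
The plan is to exploit the optimality of pairs $(x,I)$ attaining the supremum in the definition of $F_{\|\cdot\|,\tau}(y)$, by testing against the perturbation that adjoins a single index to $I$. First I would observe that, by compactness of the set of norming functionals of $\|\cdot\|$ in $\R^n$ and finiteness of the subsets of $[n]$, the supremum defining $F_{\|\cdot\|,\tau}(y)$ is attained. Moreover, the dual unit ball $B_{F^*}$ equals the closed convex hull of the generators $v(x,I)$, so standard convex duality implies that any norming functional $\widetilde x$ with $\langle y,\widetilde x\rangle=F_{\|\cdot\|,\tau}(y)$ lies in the convex hull of the \emph{maximizing} generators $\{v(x,I):\langle v(x,I),y\rangle=F_{\|\cdot\|,\tau}(y)\}$. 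Hence $\supp\widetilde x$ is contained in the union of their supports, and it suffices to verify the support condition for a single maximizing pair $(x,I)$, whose support equals $(\supp x)\setminus I$.

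Fixing such a maximizer $(x,I)$ and any index $i\in(\supp x)\setminus I$, the core idea is to compare $(x,I)$ with the competitor $(x,I\cup\{i\})$. Using $1$-unconditionality of $\|\cdot\|$, I may assume that $x$ has the same sign pattern as $y$, so that $x_jy_j=|x_j||y_j|\gr 0$ for all $j$. Writing $\alpha:=\|x\|_1$, $\beta:=\|x\mathbf{1}_I\|_1$, and $S:=\sum_{j\notin I}x_jy_j$, the optimality inequality $\langle v(x,I\cup\{i\}),y\rangle\ls\langle v(x,I),y\rangle$ reduces after cancellation to
\begin{equation*}
\frac{|x_i|}{\tau\alpha}\,S \ls \Big(1+\frac{\beta+|x_i|}{\tau\alpha}\Big)|x_i|\,|y_i|,
\end{equation*}
and since $|x_i|>0$, dividing it out gives a direct lower bound on $|y_i|$.

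To conclude, I would combine this with three elementary estimates: $\beta+|x_i|\ls\alpha$ (so the parenthesized factor is at most $(\tau+1)/\tau$); the identity $S=F_{\|\cdot\|,\tau}(y)/(1+\beta/(\tau\alpha))$ together with $\beta\ls\alpha$, yielding $S\gr\tau F_{\|\cdot\|,\tau}(y)/(\tau+1)$; and the two inputs $F_{\|\cdot\|,\tau}(y)\gr\|y\|$ from Lemma \ref{l: norm trick 1} and $\alpha=\|x\|_1\ls K$ from the hypothesis. These collapse the preceding inequality to $|y_i|\gr\tau\|y\|/((\tau+1)^2K)$, exactly as required. The only point that demands care is the initial structural reduction to extreme generators of $B_{F^*}$; I expect the rest of the argument to be a short one-line perturbation computation whose form is dictated by the multiplicative structure of $v(x,I)$.
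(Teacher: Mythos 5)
Your proof is correct and follows essentially the same route as the paper's: both compare the maximizing pair $(x,I)$ with the single-index perturbation $(x,I\cup\{i\})$, and the resulting optimality inequality together with the same three elementary estimates ($\beta+|x_i|\ls\alpha\ls K$, $\beta\ls\alpha$, and $F_{\|\cdot\|,\tau}(y)\gr\|y\|$) yields exactly the claimed lower bound on $|y_i|$. Your preliminary convex-duality reduction to maximizing generators addresses a point the paper's proof takes for granted (it assumes $\widetilde x$ is of the form $v(x,I)$ outright), so your write-up is, if anything, slightly more careful on that step.
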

\begin{proof}
Let $x$ be the ``original'' norming functional with respect to $\widetilde x$,
i.e.\ let $x$ be a norming functional with respect to the seminorm $\|\cdot\|$,
and let $I\subset\supp(\widetilde x)^c$ be the set such that
$$\widetilde x=\Big(1+\frac{\|x\,{\bf 1}_I\|_{\ell_1^n}}{\tau\|x\|_{\ell_1^n}}\Big)x\,{\bf 1}_{[n]\setminus I}.$$
Since our seminorms are $1$--unconditional, we may assume without loss of generality that all components
of $y$ and $x$ are non-negative.
Fix any $i\in \supp\widetilde x$, define $J:=I\cup\{i\}$, and set
$$x':=\Big(1+\frac{\|x\,{\bf 1}_J\|_{\ell_1^n}}{\tau\|x\|_{\ell_1^n}}\Big)x\,{\bf 1}_{[n]\setminus J}.$$
Then $x'$ belongs to the collection of functionals $\{v(x,I)\}_I$ from the definition of $F_{\|\cdot\|,\tau}(\cdot)$.
Hence, $\langle \widetilde x,y\rangle=F_{\|\cdot\|,\tau}(y)\gr\langle x',y\rangle$.
On the other hand,
$$\frac{1}{\widetilde \alpha}\langle \widetilde x,y\rangle
=\frac{1}{\alpha'}\langle x',y\rangle+x_i y_i,
$$
with
$$\widetilde \alpha:=\Big(1+\frac{\|x\,{\bf 1}_I\|_{\ell_1^n}}{\tau\|x\|_{\ell_1^n}}\Big);\quad
\alpha':=\Big(1+\frac{\|x\,{\bf 1}_J\|_{\ell_1^n}}{\tau\|x\|_{\ell_1^n}}\Big).$$
Note that
$$\frac{\alpha'}{\widetilde\alpha}=\frac{\tau\|x\|_{\ell_1^n}+\|x\,{\bf 1}_J\|_{\ell_1^n}}{\tau\|x\|_{\ell_1^n}
+\|x\,{\bf 1}_I\|_{\ell_1^n}}
=\frac{\tau\|x\|_{\ell_1^n}+\|x\,{\bf 1}_I\|_{\ell_1^n}+x_i}{\tau\|x\|_{\ell_1^n}+\|x\,{\bf 1}_I\|_{\ell_1^n}}.$$
Therefore, from the above we get
$$\langle \widetilde x,y\rangle\gr
\frac{\tau\|x\|_{\ell_1^n}+\|x\,{\bf 1}_I\|_{\ell_1^n}+x_i}{\tau\|x\|_{\ell_1^n}
+\|x\,{\bf 1}_I\|_{\ell_1^n}}\langle \widetilde x,y\rangle
-\alpha' x_i y_i,\quad \mbox{ so that }\quad\frac{\langle \widetilde x,y\rangle}{\tau\|x\|_{\ell_1^n}
+\|x\,{\bf 1}_I\|_{\ell_1^n}}
\ls\alpha' y_i.$$
This inequality, together with the relation $\|y\|\ls F_{\|\cdot\|,\tau}(y)=\langle \widetilde x,y\rangle$
from Lemma~\ref{l: norm trick 1}, implies that
$$
\frac{\|y\|}{(\tau+1)\|x\|_{\ell_1^n}}\ls
\frac{\|y\|}{\tau\|x\|_{\ell_1^n}+\|x\,{\bf 1}_I\|_{\ell_1^n}}\ls
\frac{\langle \widetilde x,y\rangle}{\tau\|x\|_{\ell_1^n}+\|x\,{\bf 1}_I\|_{\ell_1^n}}
\ls \alpha'y_i\ls \Big(1+\frac{1}{\tau}\Big)y_i,$$
and the result follows.
\end{proof}

\begin{proposition}\label{p: d1}
Let $\theta\in (0,1)$ and let $n\gr C/\theta^2$. Let $\|\cdot\|$ be a $1$--unconditional norm in the position of minimal $M$, with $\Exp\|G\|=1$,
and let $\delta>0$. Then at least one of the following two assertions is true:
\begin{itemize}

\item Either $\Prob\{\|G\|\ls \delta\Exp\|G\|\}\ls \exp(-c \theta^2 n)$, or 

\item There is a $1$--unconditional seminorm ${\mathcal U}={\mathcal U}_\delta$ in $\R^n$ such that
${\mathcal U}(y)\ls\|y\|$ for all $y\in\R^n$;
all norming functionals for ${\mathcal U}(\cdot)$ have $\ell_1^n$--norm at most $C \delta /\theta^3$; and for any subset $J\subset[n]$
of size $|J|\ls c\theta^2n$ we have $\Exp{\mathcal U}(G\,{\bf 1}_{[n]\setminus J})\gr (1-\theta)\Exp\|G\|$. 
\end{itemize}
\end{proposition}
\begin{proof}
Let $I$ be the subset of all indices $i$ such that $\Exp|\partial_i\|G\||\gr \rho \delta/n$,
where $\rho=\rho(\theta)$ will be chosen later.
Note that for any vector $y\in\R^n$ we have
\begin{equation}\label{eq: aux 072450298}
\|y\,{\bf 1}_I\|\gr \sum_{i\in I}|y_i| \cdot \mathbb E|\partial_i\|G\| | \gr \frac{\rho \delta}{n}\|y\,{\bf 1}_{I}\|_1,
\end{equation} where we have also used Lemma \ref{lem:prop-bal-pos}. We will consider two possibilities:
\begin{itemize}

\item $|I|\gr \frac{2n}{\rho}$. Then, in view of \eqref{eq: aux 072450298} and applying standard concentration estimates
for Lipschitz functions, we get
\begin{align*}
\Prob\big\{\|G\|\ls \delta\Exp\|G\|\big\}&\ls \Prob\big\{\|G\,{\bf 1}_I\|\ls \delta\big\}\\
&\ls \Prob\big\{\|G\,{\bf 1}_I\|_1\ls n/\rho \big\}\\
&\ls \Prob\big\{\|G\,{\bf 1}_I\|_1\ls |I|/2\big\}\ls \exp(-c n/\rho),
\end{align*}
for some universal constant $c>0$.

\item $|I|< \frac{2n}{\rho}$.
First, let
\[
B^\ast := \left \{ y \in \mathbb R^n: \langle x, y \rangle \ls 1, \; \forall x\in \mathbb R^n, \; \|x\| \ls 1 \right \}.
\]
%$$B^*:=\big\{x\in\R^n:\;\max\limits_{y:\,\|y\|\ls 1}\langle x,y\rangle\ls 1\mbox{ and }\langle x,y\rangle=0\mbox{ for all $y\in\R^n$ with $\|y\|=0$}\big\}.$$
Note that for any norm $\|\cdot\|$ we have that $\nabla \|x\|\in B^\ast$ (in view of $\|x\| = \langle x, \nabla\|x\| \rangle$) whenever
$x$ is a point of differentiability for $\|\cdot\|$.
For any $t\gr 1$ define a collection of functionals
$$S(t):=\big\{x\,{\bf 1}_{[n]\setminus I}:\;x\in B^*\;\;\mbox{ and }\;\;\|x\,{\bf 1}_{[n]\setminus I}\|_{1}\ls \rho \delta t
\big\}.$$
By our assumption and by Markov's inequality, we have
$$\Prob\Big\{\sum\limits_{i\in [n]\setminus I}|\partial_i\|G\||\ls \rho\delta t\Big\}\gr 1-\frac{1}{t},\quad t\gr 1.$$
Choose any subset $J\subset[n]$. By Markov's inequality again 
and the assumption that $\|\cdot\|$ is in the position of minimal $M$ (see \eqref{eq:M-iso}), we have
$$\Prob\Big\{\sum\limits_{i\in I\cup J}G_i\,\partial_i\|G\|\ls t(|I|+|J|)/n\Big\}\gr 1-\frac{1}{t},\quad t\gr 1.$$
Since $\sum_{i\in [n]\setminus (I\cup J)} x_i \partial_i \|x\| = \|x\| - \sum_{i\in I\cup J} x_i \partial_i \|x\|$ for a.e. $x$, we obtain
\begin{align*}
\Prob \left\{ \sum_{i\in [n]\setminus (I\cup J)}G_i\,\partial_i\|G\|\gr \left(\|G\|-\frac{t (|I|+|J|)}{n} \right)
\, {\rm and} \,
\sum_{i\in [n]\setminus I}|\partial_i\|G\||\ls \rho \delta\,t
\right \} \gr 1-\frac{2}{t},
\end{align*} for all $t\gr 1$.
Together with the definition of $S(t)$ this gives
$$
\Prob\Big\{\sup\limits_{w\in S(t)}\langle G\,{\bf 1}_{[n]\setminus J},w\rangle\gr \big(\|G\|-t(|I|+|J|)/n\big)
\Big\}\gr 1-\frac{2}{t},\quad t\gr 1,\quad J\subset[n].
$$
Finally, we define
${\mathcal U}(\cdot)$ as
$${\mathcal U}(y):=\sup\big\{\langle y,w\rangle:\;w\in S(8/\theta)\big\},\quad y\in\R^n.$$
Clearly, ${\mathcal U}(\cdot)$ is a $1$--unconditional seminorm with ${\mathcal U}(\cdot)\ls \|\cdot\|$.
Further, it is known that in position of minimal $M$ one has that the Dvoretzky number $ k( \| \cdot \|)$ (see \eqref{eq:conc-dvo} for the definition)
is at least of order 
$ \log{n}$ \cite[Proposition 2.5]{PV-dicho}. So, since we have assumed that $ \mathbb E \| G \|=1 $ we have that  (for large enough $n$)  
$\|G\|\gr 1-\frac{\theta}{8}$ with probability at least $1-\frac{\theta}{8}$.
Then the above relations imply that for any $J\subset[n]$ with $|J|\ls 2n/\rho$ we have
$$
\Prob\Big\{{\mathcal U}( G\,{\bf 1}_{[n]\setminus J})\gr (1-\/8-32/(\rho \theta))
\Big\}\gr 1-\frac{\theta}{8}-\frac{\theta}{4}.
$$
Finally, choose $\rho:=\frac{512}{\theta^2}$.
It is then easy to see that the last relation implies that
$\Exp{\mathcal U}(G\,{\bf 1}_{[n]\setminus J})\gr 1-\theta$,
completing the proof.
\end{itemize}
\end{proof}

%\begin{remark}\label{rem: ell 1}
%It is not difficult to see that the above proposition holds if the $M$--position is replaced with the
%$\ell$--position (with $\Exp\|G\|^2=1$).
%This follows from known concentration estimates for $\|G\|$.
%\end{remark}

The next technical lemma describes a continuous diagonal contraction,
which will be used later to ``balance'' coordinates of a seminorm.

\begin{lemma}\label{l: aux 09209234}
Let $\mathcal V(\cdot)$ be any $1$--unconditional seminorm in $\R^n$.
Denote by ${\bf D}$ the set of diagonal $n\times n$ matrices with diagonal entries in $[0,1]$.
Then for any $L>0$ there is a differentiable function $D:[0,\infty)\to{\bf D}$ having the following properties:
\begin{itemize}

\item $D(t)$ is a solution to initial value problem
$$
\begin{cases}
D(0):=I_n;\\
\frac{\partial}{\partial t}d_{ii}(t)=H(D(t))_i,
\;\;i\ls n,\quad t\in[0,\infty),
\end{cases}
$$
where for each $i\ls n$, and $A=(a_{kj})\in{\bf D}$
$$
H(A)_i:=-\max\big(0,\Exp (G_i\,\partial_i ({\mathcal V}\circ A)(G))-L\big)
-\max\Big(0,\frac{1}{4}-\Big| a_{ii}-\frac{1}{4}\Big|\Big),
$$
and $\partial_i ({\mathcal V}\circ A)(G)$ denotes $i$-th partial derivative of the seminorm
$$y\longmapsto {\mathcal V}\Big(\sum\limits_{i=1}^n a_{ii}y_ie_i\Big) = {\mathcal V}(Ay), \quad y\in\R^n,$$
at point $G$;

\item Setting $\widetilde D=(\widetilde d_{ii})\in{\bf D}$ to be the entry-wise limit of $D(t)$ when $t\to\infty$,
we have $\widetilde d_{ii}\in\{0\}\cup[1/2,1]$ for all $i\ls n$,
$$\Exp (G_i\,\partial_i ({\mathcal V}\circ \widetilde D)(G))\ls L ,$$
and 
$$\big|\big\{i\ls n:\;\widetilde d_{ii}=0\big\}\big|\ls \frac{2\Exp{\mathcal V}(G)}{ L}.$$

\end{itemize} 
\end{lemma}
\begin{proof}
For any $A\in{\bf D}$ we have $H(A)\in(-\infty,0]^n$,
and the function $H$ is Lipschitz continuous everywhere on ${\bf D}$. Indeed; using Gaussian integration by parts we may write
\[
H(A)_i = - \max \left(0,  \mathbb E[(G_i^2-1) {\mathcal V}\circ A (G)] - L \right) - \max  \left( 0, \frac{1}{4} - \left| a_{ii}-\frac{1}{4}\right| \right).
\] Since the mapping $A\mapsto \mathbb E [ (G_i^2-1){\mathcal V}\circ A(G) ]$ is Lipschitz continuous for $i=1,2,\ldots,n$, so is the mapping 
$A\mapsto H(A)$ on $\bf D$. 
%Further, recall that $\langle x,\nabla f(y) \rangle \ls f(x)$ for any seminorm. 
%Applying the latter for $x=G_ie_i$, i.e. $G_i \partial_i f(y) \ls f(G_ie_i)$, we infer the relation $$\Exp (G_i\,\partial_i ({\mathcal V}\circ A)(G))
% \ls \Exp{\mathcal V}\big(A(G_i e_i)\big) =a_{ii}{\mathcal V}(e_i) \mathbb E|G_i| < a_{ii}{\mathcal V}(e_i), \quad i\ls n,$$
%which in turn implies that $|H(A)_i|\ls a_{ii}$ whenever $a_{ii}\ls \frac{L}{{\mathcal V}(e_i)}$.
Hence, the initial value problem stated above has a (global) solution, by the Picard theorem. Moreover, each solution $d_{ii}$ is 
a nonincreasing function with $0\ls d_{ii} \ls 1$, since $\frac{\partial}{\partial t} d_{ii}(t) \ls 0$ and
$D(t)\in \bf D$. Hence, $\widetilde d_{ii} = \lim_{t\to \infty} d_{ii}(t)$ exists in $[0,1]$.
%Further, if $d_{ii}(t_0)<1/2$ for some $t_0>0$ then the term
%$-\max\big(0,\frac{1}{4}-\big|d_{ii}(t)-\frac{1}{4}\big|\big)$ in the definition of the function $H$
%forces $d_{ii}(t)$ to converge to zero when $t\to\infty$.
%This implies $\widetilde d_{ii}\in\{0\}\cup[1/2 ,1]$, $i\ls n$. Indeed; 
We also have the following:

\smallskip

\noindent {\it Fact.} For the solution $d_{ii}$ we have that $\frac{\partial}{\partial t} d_{ii}$ is uniformly continuous, hence
$\lim_{t\to \infty} \frac{\partial}{\partial t}d_{ii}(t)=0$. 

\smallskip

\noindent {\it Proof of Fact.} 
Note that each solution $d_{ii}$ satisfies $| \frac{\partial}{\partial t} d_{ii}(t)| = |H(D(t))_i| \ls 1 + L + \mathbb E [ |G_i^2-1| {\mathcal V}(G) ] = : K_i$, 
where we have used the $1$-unconditionality of $\mathcal V$ and the fact that $0\ls d_{ii}(t) \ls 1$ for all $t\gr 0$. Hence, $d_{ii}$ is
$K_i$-Lipschitz. Furthermore, we may write 
\[
\left| \frac{\partial d_{ii}}{\partial z} (t) -   \frac{\partial d_{ii}}{\partial z} (s)\right| = |H(D(t))_i - H(D(s))_i| \ls {\rm Lip}(H) \cdot \|D(t)-D(s)\|_{\rm op} 
\ls \max_i K_i \cdot {\rm Lip}(H) \cdot |t-s|,
\] which proves that $\partial d_{ii} / \partial t$ is uniformly continuous. Since $\lim_{t \to \infty} d_{ii}(t)$ exists, we infer
that $\lim_{t\to \infty} \frac{\partial}{\partial t}d_{ii}(t)=0$ (Barbalat's lemma). \prend

Having proved that fact, we may argue as follows: 
By taking limits on both sides of the differential equation 
(where we have first switched $\Exp (G_i\,\partial_i ({\mathcal V} \circ \widetilde D)(G))$ to $\mathbb E [(G_i^2-1) {\mathcal V}(\widetilde D G)]$ 
in order to interchange limit with expectation) 
and using the fact we find 
$$\max \left( 0, \Exp [(G_i^2-1) {\mathcal V}( \widetilde DG) ]-L \right) = \max \left( 0, \frac{1}{4} - \left| \widetilde d_{ii} - \frac{1}{4} \right| \right)=0, \quad i\ls n,$$
which implies $\widetilde d_{ii} \in \{0\} \cup [1/2,1]$ and $\Exp (G_i\,\partial_i ({\mathcal V} \circ \widetilde D)(G))\ls L$, as claimed.
\footnote{However, this bound is meaningful
for all $i\ls n$ with $\widetilde d_{ii} \neq 0$, since for the others holds trivially $\Exp (G_i\,\partial_i ({\mathcal V} \circ \widetilde D)(G))=0<L$.
}

Continue with the proof, for the next property we may write
\begin{align*}
\mathbb E[{\mathcal V}(G)] &= \mathbb E [{\mathcal V}(\widetilde DG)] - \int_0^\infty \mathbb E \left[ \langle \nabla {\mathcal V} (D(t)G) , D'(t) G \rangle\right] \, dt \\
%& = \mathbb E [{\mathcal V}(\tilde G)] - \sum_{i=1}^n \int_0^\infty \mathbb E \left[ G_i \frac{\partial d_{ii}}{\partial t} (t) {\partial_i \mathcal V} (D(t)G) \right] \, dt \\
& = \mathbb E [{\mathcal V}(\widetilde DG)] + \sum_{i=1}^n \int_0^\infty \left(- \frac{\partial}{\partial t} d_{ii}(t) \right) \mathbb E \left[ G_i  {\partial_i \mathcal V} (D(t)G) \right] \, dt \\
& \gr \mathbb E [{\mathcal V}(\widetilde DG)] + \sum_{i=1}^n \int_0^\infty \mathbf 1_{\{d_{ii}(t) \gr 1/2\} }\left(- \frac{\partial}{\partial t} d_{ii}(t) \right)  \mathbb E \left[ G_i {\partial_i \mathcal V} (D(t)G) \right] \, dt\\
& = \mathbb E [{\mathcal V}(\widetilde DG)] + \sum_{i=1}^n \int_0^\infty \mathbf 1_{\{d_{ii}(t) \gr 1/2\} }\frac{ -\frac{\partial }{\partial t}d_{ii}(t) }{d_{ii} (t) }  
\mathbb E \left[ G_i \partial_i (\mathcal V \circ D(t)) (G) \right] \, dt,
\end{align*} where we have used that $\partial d_{ii} / \partial t\ls 0$ and 
that $x_i \partial_i {\mathcal V}(x) = | x_i \partial_i {\mathcal V}(x)| \gr 0$ for almost all $x$ 
due to the $1$-unconditionality of $\mathcal V$. 

Now, our definition of $D(t)$ implies that $\frac{\partial}{\partial t}d_{ii}(t)<0$ and $d_{ii}(t)\gr 1/2$ hold simultaneously
only if $\Exp (G_i\,\partial_i ({\mathcal V} \circ D(t))(G))\gr L$.
Using this inequality and the fact that $d_{ii}\ls 1$, we get
$$
\Exp{\mathcal V}(G)\gr \Exp{\mathcal V}(\widetilde DG)+
\sum\limits_{i=1}^n \int\limits_{0}^\infty (-L){\bf 1}_{\{d_{ii}(t)\gr 1/2\}}
\frac{\partial}{\partial t}d_{ii}(t)\,dt\gr \Exp{\mathcal V}(\widetilde DG)+ \frac{ L}{2}\big|\big\{i\ls n:\;
\widetilde d_{ii}=0\big\}\big|,
$$
implying the bound on the cardinality of $\big|\big\{i\ls n:\;
\widetilde d_{ii}=0\big\}\big|$.

\end{proof}

\begin{remark}
On the conceptual level, the continous contraction $D(t)$ constructed above is designed to
act on the coordinates which give the main input to the expectation $\Exp {\mathcal V}(G)$.
This way, we balance coordinates by making their input approximately equal.
When ``reasonable'' balancing does not work for some coordinates, that is, when after being rescaled by $1/2$
they still produce a large input to the norm, we zero them out.
\end{remark}

\begin{definition}
Let $\|\cdot\|$ be a $1$--unconditional seminorm in $\R^n$ and let $\tau\gr 1$ be a parameter.
We will say that $\|\cdot\|$
is in {\it $M_\tau$--position} if for any $i\ls n$ we have
$$\Exp (G_i\,\partial_i\|G\|)\ls \frac{\tau}{n}\Exp\|G\|,$$
where $G$ is the standard Gaussian vector in $\R^n$.
\end{definition}

\begin{proposition}\label{p: d2}
There are universal constants $C,c>0$ with the following property.
Let $n\gr C$, let $\|\cdot\|$ be a $1$--unconditional norm in the position of minimal $M$, with $\Exp\|G\|=1$,
and let $\delta>0$. Then at least one of the following is true:
\begin{itemize}

\item Either $\Prob\{\|G\|\ls \delta\Exp\|G\|\}\ls \exp(-c n)$, or 

\item There is a $1$--unconditional seminorm ${\mathcal T}={\mathcal T}_{\delta}$ in $\R^n$ in the $M_C$--position such that
${\mathcal T}(y)\ls 2\|y\|$ for all $y\in\R^n$;
$\Exp {\mathcal T}(G)\gr \frac{1}{4}\Exp\|G\|$; and
$\Exp|\partial_i {\mathcal T}(G)|\ls \frac{C\delta}{n}\Exp\|G\|$ for all $i\ls n$.
\end{itemize}
\end{proposition}
\begin{proof}
We will apply Proposition~\ref{p: d1} with $\theta:=1/2$.
Assuming that the constant $c>0$ is sufficiently small and that the first assertion of
the statement does not hold, there is a $1$--unconditional seminorm ${\mathcal U}(\cdot)$
with the properties stated in Proposition~\ref{p: d1}.
Define a seminorm ${\mathcal W}(\cdot)$ in $\R^n$ by setting
$${\mathcal W}(y):=F_{{\mathcal U}(\cdot),1}(y),\quad y\in\R^n,$$
where the transformation $F_{{\mathcal U}(\cdot),1}$ was constructed earlier.
Note that, in view of Lemma~\ref{l: norm trick 1} and properties of ${\mathcal U}(\cdot)$,
\begin{equation}\label{eq: aux 09482069235}
\Exp{\mathcal W}(G)\ls 2.
\end{equation}

Next, we apply Lemma~\ref{l: aux 09209234} with $\mathcal V:=\mathcal W$ and $L:=\frac{\tau}{n}\Exp\|G\|$
($\tau$ to be chosen a bit later)
to obtain a diagonal contraction operator $\widetilde D$
with $\widetilde d_{ii}\in\{0\}\cup[1/2,1]$, $i\ls n$, and with the set
$$J:=\big\{i\ls n:\;\widetilde d_{ii}=0\big\}$$
of cardinality at most $\frac{2n\Exp {\mathcal W}(G)}{\tau}$,
which is less than $\frac{4n}{\tau}$, in view of \eqref{eq: aux 09482069235}.
Set
$${\mathcal T}(y):={\mathcal W}(\widetilde Dy),\quad y\in\R^n.$$
We claim that ${\mathcal T}(\cdot)$ is a $1$--unconditional seminorm in the $M_{C}$--position,
for an appropriate $C>0$.
By the properties of $\widetilde D$, for all $i\in[n]\setminus J$ we have $\widetilde d_{ii}\gr 1/2$,
whence
${\mathcal T}(y)\gr \frac{1}{2}{\mathcal W}(y)$ for any $y\in\R^n$ with $\supp y\subset [n]\setminus J$.
Take $\tau:=4\theta^{-2}/c$, where $c$ is the constant from Proposiion~\ref{p: d1}.
Then, by that proposition, we have
$$\Exp {\mathcal T}(G)\gr \Exp {\mathcal T}(G\,{\bf 1}_{[n]\setminus J})
\gr \frac{1}{2}\Exp {\mathcal W}(G\,{\bf 1}_{[n]\setminus J})\gr \frac{1}{4}\Exp \|G\|.$$
Hence, by Lemma~\ref{l: aux 09209234} and the definition of $L$,
$$\Exp (G_i\partial_i {\mathcal T}(G))\ls \frac{\tau}{n}\Exp\|G\|\ls \frac{4\tau}{n}\Exp {\mathcal T}(G),\quad i\ls n,$$
so indeed $\mathcal T(\cdot)$ is in $M_C$--position for $C:=4\tau$.

It only remains to check the assertion about the partial derivatives, i.e.\ that
$$\Exp|\partial_i {\mathcal T}(G)|\ls \frac{C''\delta}{n}\Exp\|G\|,\quad i\ls n,$$
for some $C''>0$.
For that, we will apply Lemma~\ref{l: F transform}.
Pick any vector $y\in\R^n$ with $\supp y\subset [n]\setminus J$ and ${\mathcal T}(y)\neq 0$.
Observe that $\widetilde Dx$ is a norming functional for $y$ with respect to the seminorm ${\mathcal T}(\cdot)$
if and only if $x$ is a norming functional for $\widetilde Dy$ with respect to the seminorm ${\mathcal W}(\cdot)$. 
On the other hand, according to Lemma~\ref{l: F transform}, we have
$$\supp\,x\subset\Big\{i\ls n:\;|\widetilde d_{ii}y_i|\gr \frac{{\mathcal U}(y)}{2^2 K}\Big\},$$
where $K$ is the maximal $\ell_1^n$--norm of a norming functional for $\mathcal U(\cdot)$.
The definition of $\mathcal U(\cdot)$ implies that $K\ls C'\delta$. 
Since all diagonal entries of $\widetilde D$ indexed over $[n]\setminus J$, belong to the interval $[1/2,1]$, we get from the above
$$\supp\,\widetilde Dx\subset\Big\{i\ls n:\;|y_i|\gr \frac{{\mathcal T}(y)}{16 C'\delta}\Big\},$$
or, equivalently,
$$\partial_i {\mathcal T}(z)\neq 0\;\;\mbox{ only if }|z_i|\gr \frac{{\mathcal T}(z)}{16 C'\delta}\;\;
\mbox{ for almost all $z\in\R^n$}.$$ 
But this immediately implies that for any $i\ls n$ we have
$$|\partial_i {\mathcal T}(G)|\ls \frac{16 C'\delta}{\mathcal T(G)}G_i\partial_i {\mathcal T}(G)$$
almost everywhere on the probability space, whence, together with the above relations,
$$\Exp|\partial_i {\mathcal T}(G)|\ls \frac{C_1\tau\delta}{n}$$
for a universal constant $C_1>0$. The result follows.
\end{proof}

\begin{proof}[Proof of Theorem~\ref{p: optimal in M}]
We need to show that
\begin{align}
\Prob \left\{ \|G\| \ls \delta \Exp\|G\| \right\} <
2\exp \big( - c n^{1-C\delta^2}  \big),\quad \delta\in(0,1/2].
\end{align}
The proof essentially follows by combining Proposition~\ref{p: d2} and Theorem~\ref{thm:hyper-sb}.
If the first assertion of Proposition~\ref{p: d2} holds then we are done.
Otherwise, let $\mathcal T(\cdot)$ be the seminorm defined within the second assertion of that proposition.
We set
\begin{align*}
L := \frac{\sum_{i=1}^n (\Exp | \partial_i \mathcal T(G) |)^2 }{(\Exp \mathcal T(G))^2}.
\end{align*}
Then, by the properties of $\mathcal T(\cdot)$, we have $L\ls C\delta^2/n$ for some $C>0$.
On the other hand, again by the properties of $\mathcal T(\cdot)$,
$$\Prob \left\{ \|G\| \ls \delta \Exp\|G\| \right\}
\ls \Prob \left\{\mathcal T(G) \ls 8\delta \Exp\mathcal T(G) \right\}.
$$
It remains to apply Theorem~\ref{thm:hyper-sb}.
\end{proof}

%%%%%%%%%%%%%%%%%%%%%%%%%%%
\subsection{The general case}\label{subs: gen case}
%%%%%%%%%%%%%%%%%%%%%%%%%%%

Here we show that every norm in $\mathbb R^n$ has an invertible linear image
which satisfies a small-ball estimate with exponential decay in $n$, thus extending Theorem \ref{thm:1-unc} and
Theorem \ref{p: optimal in M} to arbitrary norms. More precisely, we prove the following:

\begin{theorem} \label{thm:main-2a}
Let $\|\cdot\|$ be a norm on $\mathbb R^n$. Then, there exists $T\in GL(n)$ with the following property: for any $0<\delta < \frac{1}{4}$ one has
\begin{align}
\mathbb P \left\{ \|TG\| \ls \delta \mathbb E\| TG\| \right\} \ls \exp \left( - c  n^{\frac{1}{4}-C\delta^2} \right), 
\quad G\sim N({\bf 0},I_n),
\end{align} where $c , C>0$ are universal constants.
\end{theorem}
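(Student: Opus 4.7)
The strategy is to combine a structural dichotomy \`a la Alon--Milman with the hypercontractive small-ball estimate of Theorem~\ref{thm:hyper-sb}. The key observation is that the exponent $(1/\tilde\beta)^{\tau(\delta)}(1/L)^{1-\tau(\delta)}$ in Theorem~\ref{thm:hyper-sb} is already of polynomial order in $n$ as soon as either $1/\tilde\beta$ (a Dvoretzky-type quantity) or $1/L$ (an $L_1$-balance quantity) is polynomial in $n$. I will therefore set up a dichotomy in which one alternative supplies a polynomial lower bound on $1/\tilde\beta$, while the other places the norm into a position where $1/L$ is polynomial.

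First, starting from, say, John's position, I would apply the Alon--Milman dichotomy of Proposition~\ref{prop:AM-v1} (or a variant of it). In the easy branch, $k(\|\cdot\|) \gtrsim n^{1/4}$ and hence $1/\tilde\beta \gtrsim n^{1/4}$; Theorem~\ref{thm:sdi} together with \eqref{eq:aux0987} (or equivalently Theorem~\ref{thm:hyper-sb} with $\tau(\delta)$ close to $1$) already yields the sought estimate. In the opposite branch, the dichotomy produces a subspace $F \subseteq \mathbb{R}^n$ of some dimension $m$ on which $\|\cdot\|$ is well-approximated, up to a bounded unconditional constant in a suitable basis, by a norm of $\ell_\infty$-type.

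On $F$ I then construct $S \in GL(F)$ via a Borsuk--Ulam selection argument (the topological ingredient mentioned in the introduction and developed in \cite{PV-dicho}) so that the pulled-back norm $\|S\cdot\|$ satisfies the $w^{1,1}$-condition on $F$, namely
\begin{equation*}
\mathbb{E}|\partial_i(\|SG_F\|)| = \mathbb{E}|\partial_j(\|SG_F\|)|, \quad 1 \le i,j \le m.
\end{equation*}
Combined with the bounded unconditional constant inherited from the Alon--Milman step, Lemma~\ref{lem:prop-bal-pos} forces $\sum_i \|\partial_i(\|S\cdot\|)\|_{L_1(\gamma_m)}^2 \lesssim (\mathbb{E}\|SG_F\|)^2/m$, so the parameter $L$ of Theorem~\ref{thm:hyper-sb} obeys $L \lesssim 1/m$. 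Applying Theorem~\ref{thm:hyper-sb} to $\|S\cdot\|$ on $F$ then yields a small-ball bound of order $\exp(-c\delta^2 m^{1-C\delta^2})$. The dimension $m$ is the balancing parameter of the dichotomy, and matching this against the easy-branch threshold gives $m \asymp n^{1/4}$, producing the desired $\exp(-cn^{1/4-C\delta^2})$.

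Finally, as in \cite{PV-dicho} and the sketch at the end of Section~3.2, I extend $S$ to $T = S \oplus (a\,I_{F^\perp}) \in GL(n)$ with $a>0$ small enough that $\mathbb{E}\|TG\| \le 2\mathbb{E}\|SG_F\|$; the inclusion $\{\|TG\|\le (\delta/2)\mathbb{E}\|TG\|\} \subseteq \{\|SG_F\|\le \delta\mathbb{E}\|SG_F\|\}$ transfers the bound from $F$ to $\mathbb{R}^n$. The hardest step is running the dichotomy at the polynomial scale: the classical Alon--Milman theorem only yields a subspace of dimension $e^{c\sqrt{\log n}}$ in its hard branch, and boosting this to $n^{1/4}$ while retaining the bounded unconditional constant needed in Lemma~\ref{lem:prop-bal-pos} is exactly where the new hypercontractive estimate of Theorem~\ref{thm:hyper-sb} earns its keep, since the factor $(1/L)^{1-\tau(\delta)}$ converts even modest control on $L$ into polynomial small-ball decay.
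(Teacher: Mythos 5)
Your overall skeleton is the same as the paper's: a dichotomy with an easy branch handled by $d\gtrsim k$, a hard branch handled by passing to a subspace, balancing via the Borsuk--Ulam diagonal map of Lemma~\ref{lem:bal-der} to enforce the $w^{1,1}$--condition, an application of Theorem~\ref{thm:hyper-sb}, and the lifting $T=S\oplus(aI_{F^\perp})$. But there is a genuine gap exactly where you flag ``the hardest step.'' No form of the Alon--Milman theorem produces a subspace of dimension $n^{1/4}$ with \emph{bounded} unconditional constant: the classical statement gives only dimension $e^{c\sqrt{\log n}}$ in its hard branch, and the hypercontractive estimate of Theorem~\ref{thm:hyper-sb} cannot manufacture such a subspace --- it only exploits one after it has been found. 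Declaring that the factor $(1/L)^{1-\tau(\delta)}$ ``earns its keep'' here does not close this step. The paper's resolution is different in a way that matters quantitatively: it does \emph{not} seek a bounded unconditional constant. Via Lemma~\ref{lem:amtdr} (Talagrand's sharp form of Alon--Milman, resting on Dvoretzky--Rogers in John's position) one gets a \emph{coordinate} subspace $E_\sigma$ of dimension $|\sigma|\gtrsim\sqrt{n}$ on which the norm is $\ell_\infty$-equivalent with distortion $O(\sqrt{k})$, hence ${\rm unc}(E_\sigma,\|\cdot\|)=r\lesssim\sqrt{k}\ls Cn^{1/8}$ after assuming $k\ls n^{1/4}$. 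The point is that Theorem~\ref{thm:opt-r-unc} yields an exponent of order $(|\sigma|/r^2)^{1-C\delta^2}\asymp n^{\frac14(1-C\delta^2)}$, so a polynomially large unconditional constant is perfectly acceptable provided the ratio $m/r^2$ is polynomial; this is where the exponent $1/4$ actually comes from, not from balancing a subspace dimension against the easy-branch threshold.

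A secondary issue: applying Theorem~\ref{thm:hyper-sb} directly, as you do, leaves a prefactor $\delta^2$ in the exponent, giving $\exp(-c\delta^2 m^{1-C\delta^2})$; for $\delta$ polynomially small in $n$ this is strictly weaker than the claimed $\exp(-cn^{1/4-C\delta^2})$. The paper removes this prefactor in Theorem~\ref{thm:opt-unc} by a two-case argument: when $\delta\gr\frac{1}{10r}\sqrt{Ln}$ the factor $\delta^2(1/L)^{1-\tau(\delta)}$ is already $\gtrsim(n/r^2)^{1-\tau(\delta)}$, and when $\delta$ is below that threshold the small-ball event is contained in $\{\|x\|_1\ls n/10\}$, which has probability at most $e^{-cn}$ by Lemma~\ref{lem:prop-bal-pos}. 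You would need to incorporate this case analysis to cover the full range $0<\delta<\tfrac14$.
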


The strategy is first to show that a power law estimate holds true conditionally, i.e. if the underlying norm
has moderate unconditional structure. In view of Theorem~\ref{thm:opt-unc}, this can be accomplished if the norm also satisfies the 
$w^{1,1}$--condition. This is promised by the following lemma from \cite{PV-dicho} whose proof rests on the Borsuk-Ulam
antipodal theorem.

\begin{lemma} \label{lem:bal-der}
Let $f$ be a smooth norm on $\mathbb R^m$. Then, there exist $\lambda_1,\ldots, \lambda_m>0$ such that 
\begin{align*}
\|\partial_i (f\circ \Lambda)\|_{L^1} = \| \partial_j (f\circ \Lambda) \|_{L^1}, \quad i,j=1,\ldots,m,
\end{align*} where $\Lambda={\rm diag} (\lambda_1, \ldots,\lambda_m)$. 
\end{lemma}

Combining this with Theorem~\ref{thm:opt-unc} we derive the following:

\begin{theorem} \label{thm:opt-r-unc}
Let $\|\cdot\|$ be a norm on $\mathbb R^m$. There exists  $S\in GL(m)$ such that, for any $\delta\in (0,1/2)$ we have
\begin{align}
\mathbb P \left\{ \|SG\| \ls \delta \mathbb E\|SG\| \right\} < \exp \left( - c \left( m/ r^2 \right)^{1-C\delta^2} \right),
\end{align} where $r={\rm unc}(\mathbb R^m, \|\cdot\|)$ and $c,C>0$ are absolute constants.
\end{theorem}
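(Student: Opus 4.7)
The plan is to construct $S$ explicitly as a composition of two linear operations that together bring $\|\cdot\|$ into a position where Theorem~\ref{thm:opt-unc} applies with parameter comparable to $r$. The first operation changes basis to a nearly-optimal unconditional basis; the second is a diagonal rescaling that produces the $w^{1,1}$-balance condition needed by Theorem~\ref{thm:opt-unc}.

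First I would use the definition of $r = {\rm unc}(\mathbb R^m, \|\cdot\|)$ to select a basis $(b_i)_{i=1}^m$ with ${\rm unc}(\|\cdot\|, \{b_i\}) \le 2r$, and let $U \in GL(m)$ be the linear map with $Ue_i = b_i$. Setting $f(x) := \|Ux\|$, a direct computation gives ${\rm unc}(f, \{e_i\}) \le 2r$, since for any signs $\varepsilon_i = \pm 1$ and scalars $\alpha_i$,
\[
f\Bigl(\sum_{i=1}^m \varepsilon_i \alpha_i e_i\Bigr) = \Bigl\|\sum_{i=1}^m \varepsilon_i \alpha_i b_i\Bigr\| \le 2r \Bigl\|\sum_{i=1}^m \alpha_i b_i\Bigr\| = 2r \cdot f\Bigl(\sum_{i=1}^m \alpha_i e_i\Bigr).
\]
Next, I would apply Lemma~\ref{lem:bal-der} to $f$ to obtain positive scalars $\lambda_1, \ldots, \lambda_m$ such that the norm $g := f \circ \Lambda$, with $\Lambda := {\rm diag}(\lambda_1, \ldots, \lambda_m)$, satisfies $\|\partial_i g\|_{L_1(\gamma_m)} = \|\partial_j g\|_{L_1(\gamma_m)}$ for all $i,j$, i.e., the $w^{1,1}$-condition. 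Because sign flipping of coordinates commutes with diagonal rescaling, the unconditional constant is preserved under the second change of variable: ${\rm unc}(g, \{e_i\}) = {\rm unc}(f, \{e_i\}) \le 2r$.

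Finally, setting $S := U\Lambda \in GL(m)$, we have $\|Sx\| = g(x)$, so $\|SG\|$ and $g(G)$ are identically distributed. Since $g$ satisfies both the $w^{1,1}$-balance condition and the unconditional bound with constant $2r$, Theorem~\ref{thm:opt-unc} delivers
\[
\mathbb P\bigl\{\|SG\| \le \delta \, \mathbb E\|SG\|\bigr\} < \exp\Bigl(-c \bigl(m/(2r)^2\bigr)^{1-C\delta^2}\Bigr),
\]
and absorbing the factor of $4$ into the absolute constant gives the claimed estimate. (The $\tilde\beta$ factor appearing in the conclusion of Theorem~\ref{thm:opt-unc} is bounded over the range $\delta \in (0,1/2)$ and can be absorbed into $C$.)

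The only obstacle is to ensure that the two successive linear changes of variable are compatible: the $w^{1,1}$-balancing from Lemma~\ref{lem:bal-der} must be achievable without destroying the near-optimal unconditional structure from the first step. Fortunately, Lemma~\ref{lem:bal-der} delivers the balance through a purely diagonal transformation, and sign flipping commutes with coordinate scaling; this ensures the two properties coexist for $g$ and hence Theorem~\ref{thm:opt-unc} applies with the parameter $r$ (up to an absolute multiplicative constant).
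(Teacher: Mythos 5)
Your proposal is correct and follows exactly the route the paper intends: the paper derives Theorem~\ref{thm:opt-r-unc} precisely by ``combining'' Lemma~\ref{lem:bal-der} with Theorem~\ref{thm:opt-unc}, i.e., passing to a near-optimal unconditional basis and then applying the diagonal balancing map, which (as you note) preserves the unconditional constant of the standard basis. Your handling of the residual $(1/\tilde\beta)^{\tau(\delta)}$ factor is also fine, since $1/\tilde\beta\gr k(f)\gr 2/\pi$ for any norm, so that factor is bounded below by a universal constant.
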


This result almost reaches the final goal, except from the parameter $r ={\rm unc} X$. 
Note that John's theorem \cite{Jo} readily implies that ${\rm unc} X \ls \sqrt{\dim X}$ for any 
finite dimensional normed space. On the other hand, 
it is known (see e.g. \cite{FKP} and \cite{FJ}) that a ``typical" subspace $F$ of $\ell_\infty^n$ of proportional dimension satisfies 
${\rm unc}F \gtrsim \sqrt{\dim F}$, which marks the end of usefulness of Theorem \ref{thm:opt-r-unc} in those cases. 
Nonetheless, we may overcome this obstacle by locating moderate unconditional structure in {\it every} normed space.
In order to do so, our argument breaks into a dichotomy based on the Alon--Milman theorem \cite{AM}, 
and its sharper form due to Talagrand \cite{Tal-am}. Below we state it as a lemma 
in a customized form that will be of immediate use for us. 

\begin{lemma} \label{lem:amtdr}
Let $X=(\mathbb R^n, \|\cdot\|)$ be a normed space and let $B_2^n=\{x: \|x\|_2\ls 1\}$ be the maximal volume ellipsoid inscribed in 
$B_X=\{x: \|x\|\ls 1\}$. Then, there exists $\sigma\subset [n]$ with $|\sigma| \gr c\sqrt{n}$ such that 
\begin{align*}
\frac{1}{8} \max_{i\in \sigma}|\alpha_i| \ls \left\| \sum_{i\in \sigma} \alpha_i e_i  \right\| \ls 4\sqrt{k} \max_{i\in \sigma}|\alpha_i|,
\end{align*} for all scalars $(\alpha_i)_{i\in \sigma} \subset \mathbb R$, where $k=k(\|\cdot\|)$ is the Dvoretzky number 
defined in \eqref{eq:conc-dvo}.
\end{lemma}

The reader is referred to \cite[Theorem 4.4]{PV-dicho} for the detailed proof of this formulation which uses the classical Dvoretzky-Rogers
lemma \cite{DR}.

Combining Lemma~\ref{lem:amtdr} with Theorem \ref{thm:opt-r-unc}, we can prove Theorem \ref{thm:main-2a}.

\medskip

\noindent{\it Proof of Theorem~\ref{thm:main-2a}.} We may assume that $B_2^n$ is the 
maximal volume ellipsoid inscribed in $B_X$ and let
$k = (\mathbb E\|G\|)^2 \ls n^{1/4}$, otherwise if $k >n^{1/4}$ the assertion follows from \eqref{eq:conc-dvo} 
(with $T={\rm Id}$). Then, Lemma~\ref{lem:amtdr} 
yields a $\sigma \subset [n]$ with $|\sigma|\asymp \sqrt{n}$ such that
\begin{align}
c_1 \max_{i\in \sigma} |\alpha_i| \ls \left\| \sum_{i\in \sigma} \alpha_i e_i \right\| \ls C_1 \sqrt{k} \max_{i\in \sigma} |\alpha_i|,
\end{align} for all scalars $(\alpha_i)\subset \mathbb R$. Set $E_\sigma={\rm span}\{e_i: i\in \sigma\}$ and note that the above
estimate shows that 
\begin{align*}
r:={\rm unc}(E_\sigma, \|\cdot\|)\ls C_2 \sqrt{k} \ls C_2n^{1/8}.
\end{align*}
Therefore, Theorem \ref{thm:opt-r-unc} yields the existence of $S\in GL(E_\sigma)$ with the property that for all $\delta\in (0,1/2)$ one has
\begin{align*}
\mathbb P \left\{ \|SZ\| \ls \delta \mathbb E\|SZ\| \right\} \ls \exp\left( - ( |\sigma|/ r^2)^{1-C \delta^2 } \right) 
\ls \exp \left( - c  n^{\frac{1}{4}-C' \delta^2 } \right), \quad Z \sim N({\bf 0},I_{E_\sigma}).
\end{align*} Now we apply a ``lifting" argument: We augment the linear map $S$ by considering 
$T=S\oplus (a I_{E_{\sigma}^\perp})$ to get
\begin{align*} 
\mathbb E\|TG\| \ls 2\mathbb E\|SZ\|,
\end{align*} provided $a>0$ is sufficiently small and, moreover,
\begin{align*}
\mathbb P\left( \|TG\| \ls t \right) \ls \mathbb P \left( \|SZ\| \ls t \right), \quad t>0,
\end{align*} The result readily follows. \prend

%%%%%%%%%%%%%%%%%%%%
\subsection{Dvoretzky-type results}
%%%%%%%%%%%%%%%%%%%%

Although the connection of local almost Euclidean structure with the Gaussian concentration has its origins in V. Milman's work
\cite{Mil-dvo}, the link to the Gaussian small ball estimate had not been considered until the work 
of Klartag and Vershynin in \cite{KV}. 
The authors there establish the remarkable phenomenon that any centrally symmetric convex body in $\mathbb R^n$, admits
random sections of dimension larger than the Dvoretzky number which are well bounded in terms of the expected radius. 

\begin{theorem} [Klartag--Vershynin]
Let $A$ be a centrally symmetric convex body in $\mathbb R^n$. Then, for any $1\ls m\ls cd(A)$, the random 
$m$-dimensional subspace $F$ (with respect to the Haar probability measure on the Grassmannian $G_{n,m}$) satisfies
\begin{align*}
A\cap F \subseteq \frac{C}{M(A)} B_F, \quad M(A)= \int_{S^{n-1}} \|\theta\|_A \, d\sigma(\theta) = (\mathbb E\|G\|_2)^{-1} \mathbb E\|G\|_A,
\end{align*} with probability greater than $1-e^{-cd(A)}$.
Equivalently, the random $F$ satisfies
\[
\|y\|_A \gr c\frac{\mathbb E\|G\|_A}{\sqrt{n}} \|y\|_2, \quad \forall \, y\in F,
\] with overwhelming probability, where $\|\cdot\|_A$ is the induced norm.
\end{theorem}

In order to make the connection of the small ball estimate with the one-sided random version of Dvoretzky's theorem more transparent, 
let us first recall the definition of the lower Dvoretzky dimension $d$ due to Klartag and Vershynin:
For a centrally symmetric convex body $A$ in $\mathbb R^n$ and $\delta\in (0,1)$, one defines
\footnote{
Let us point out that the original formulation uses the uniform probability measure $\sigma$ on the sphere $S^{n-1}$. However, one 
can equivalently work with the Gaussian measure for this problem in the light of the following (standard) estimates that compare the 
two measures:
\begin{align*}
\gamma_n(tB_2^n) \sigma(A \cap S^{n-1}) \ls \gamma_n(t A), \quad \gamma_n(sA) \ls \gamma_n(sB_2^n) +\sigma(A\cap S^{n-1}), \quad t,s>0,
\end{align*} where $A$ is any centrally symmetric convex body in $\mathbb R^n$; see in particular \cite{KV} and \cite{LO}.
}
\begin{align*}
d(A,\delta) = \min \left\{ n, -\log \mathbb P \left\{ \|G\|_A \ls \delta {\rm med}(\| G \|_A)\right\} \right \},
\end{align*} where ${\rm med}(\|G\|_A)$ is the median of $\|G\|_A$.
For $\delta=1/2$ we simply write $d(A)=d(A,1/2)$. Building on the ideas
of Latala and Oleszkiewicz from \cite{LO} and employing the $B$-inequality \cite{CFM} they prove the following:
\begin{align} \label{eq:KV-sb}
\mathbb P \left\{ \|G\|_A \ls c\varepsilon \mathbb E\|G\|_A \right\} \ls \mathbb P \left\{ \|G\|_A \ls \varepsilon {\rm med}(\|G\|_A)\right\} 
\ls \varepsilon^{\frac{d(A,\delta)-\log 2}{\log(1/\delta)}}, \quad \varepsilon\in (0,\delta), \quad (0<\delta <1/2),
\end{align} where we have also used the standard fact that $\mathbb E\|G\| \ls C {\rm med}(\|G\|)$ for any norm $\|\cdot\|$.

The next step is to express small ball estimates in terms of reverse H\"older inequalities. More precisely, the estimate
\eqref{eq:KV-sb} implies the following standard: 

\begin{lemma} \label{lem:neg-moms}
	Let $A$ be a centrally symmetric convex body in $\mathbb R^n$ and let $\delta\in (0,1/3)$. Then,
\begin{align*}
\left( \mathbb E\|G\|_A^{-q} \right)^{-1/q} \gr c \delta \mathbb E\|G\|_A, \quad q\ls c d(A,\delta)/\log(1/\delta).
\end{align*} 
\end{lemma}

\noindent {\it Sketch of Proof.} We write
\[
\mathbb E\|G\|_A^{-q} = q ({\rm med}(\|G\|_A))^{-q}\int_0^\infty t^{-q-1} \mathbb P\{\|G\|_A \ls t {\rm med}(\|G\|_A) \} \, dt
\]
and we use \eqref{eq:KV-sb} to bound the integral as follows
\begin{align*}
q\int_0^\infty t^{-q-1} \mathbb P\{\|G\|_A \ls t {\rm med}(\|G\|_A) \} \, dt &\ls q \int_0^\delta t^{a-q-1} \, dt + q \int_\delta^\infty t^{-q-1}\, dt \\
&\ls \frac{q}{a-q}\delta^{a-q} + \delta^{-q} \ls \delta^{-q} \frac{a}{a-q} \ls 2\delta^{-q},
\end{align*} as long as $2q\ls a: = \frac{d(A,\delta)-\log 2}{\log (1/ \delta)}$. \prend

\medskip

The final step requires to bound the diameter of random sections in terms of  the negative moments of $\|G\|_A$. Note that this 
cannot be achieved now by the standard net argument. The reason is that the latter works once we have first established an upper bound
for the norm in the subspace, equivalently that the section contains relatively large ball; this is not the case in this regime. To overcome
this obstacle Klartag and Vershynin devise a dimension lift technique based on the low $M$-inequality from \cite{K-geom}.

\begin{lemma} [Dimension lift] \label{lem:dim-lift}
Let $A$ be a centrally symmetric convex body in $\mathbb R^n$. Then, for any $1\ls m\ls q \ls n/8$ we have
\begin{align}
\left( \int_{G_{n,m}} {\rm diam}(A\cap F)^m \, d\nu_{n,m} (F) \right)^{1/m} \ls 
\frac{C}{M(A)}\left( \frac{\mathbb E\|G\|_A}{ \left( \mathbb E\|G\|_A^{-q} \right)^{-1/q} } \right)^2.
\end{align}
\end{lemma}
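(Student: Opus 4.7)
The plan is to reduce the diameter estimate to a Gaussian $L_q$–$L_\infty$ comparison on a random subspace and then exploit rotational invariance to transfer back to $\mathbb{R}^n$, where the negative moment $(\Exp\|G\|_A^{-q})^{-1/q}$ can be plugged in directly. Write $D(F) := {\rm diam}(A\cap F) = 2/r(F)$, where $r(F) := \inf_{\theta\in F\cap S^{n-1}}\|\theta\|_A$. If $G_F$ denotes the standard Gaussian vector on $F$, the polar decomposition $G_F = \|G_F\|_2\,\theta_F$ (with $\theta_F$ uniform on $F\cap S^{n-1}$, independent of $\|G_F\|_2$) together with $\Exp\|G_F\|_2^{-q}\asymp m^{-q/2}$ for $q\le m/2$ will give a clean comparison
$$\mathbb{E}_{G_F}\|G_F\|_A^{-q} \asymp m^{-q/2}\,\mathbb{E}_{\theta_F}\|\theta_F\|_A^{-q}.$$
Moreover, by rotational invariance of $\nu_{n,m}$, integrating this identity over $F$ yields $\int_{G_{n,m}}\mathbb{E}_{G_F}\|G_F\|_A^{-q}\,d\nu_{n,m}(F) = \Exp\|G\|_A^{-q} = M_q^{-q}$.

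Next, I would bound the supremum $1/r(F)$ by the $L_q$–average over the sphere $F\cap S^{n-1}$. The mechanism here is the standard observation that if $\theta^*\in F\cap S^{n-1}$ achieves the infimum $r(F)$, then on a spherical cap $\mathcal{C}\subset F\cap S^{n-1}$ of geodesic radius $\alpha$ around $\theta^*$ one has $\|\theta\|_A\le r(F)+\alpha\,\mathrm{Lip}(\|\cdot\|_A)\le 2r(F)$ provided $\alpha\asymp r(F)/\mathrm{Lip}(\|\cdot\|_A)$. Using the relation $\mathrm{Lip}(\|\cdot\|_A) \lesssim M/M(A)$ (which is the crucial input — it is essentially an inverse form of the Urysohn-type bound) together with the spherical-cap volume estimate $\sigma_F(\mathcal{C})\gtrsim \alpha^{m-1}$, this gives
$$\sup_{\theta\in F\cap S^{n-1}}\|\theta\|_A^{-q} \;\le\; C^q\,\sigma_F(\mathcal{C})^{-1}\,\mathbb{E}_{\theta_F}\|\theta_F\|_A^{-q},$$
so that, after taking the $1/m$-th power, an extra factor of the form $\bigl(M/(M_q\,M(A))\bigr)$ appears from inverting $\alpha$.

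Combining the two ingredients, for each $F$ I obtain
$$D(F)^m \;\le\; \frac{C^m m^{m/2}}{M(A)^m}\Big(\frac{M}{M_q}\Big)^m\,(\mathbb{E}_{G_F}\|G_F\|_A^{-q})^{m/q}.$$
Integrating over $F\in G_{n,m}$ with respect to $\nu_{n,m}$, applying Jensen's inequality in the form $(\mathbb{E}_F X^{m/q})\le (\mathbb{E}_F X)^{m/q}$ (valid since $m\le q$), and using the identity above, the right side collapses to
$$\frac{C^m m^{m/2}}{M(A)^m}\Big(\frac{M}{M_q}\Big)^m M_q^{-m} = \frac{C^m}{M(A)^m}\Big(\frac{M}{M_q}\Big)^m\cdot\Big(\frac{\sqrt{m}}{M_q}\Big)^m,$$
and a final estimate $\sqrt{m}\lesssim M/M_q$ (which follows from $M_q\lesssim M$ and $m\le q\le n/8$, bounding the first moment $M\asymp \sqrt{n}\,M(A)$ in terms of negative moments via Markov) upgrades the second factor to $(M/M_q)$, producing the claimed $(M/M_q)^2$.

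The main obstacle I expect is the sharp spherical-cap computation together with the correct bound on $\mathrm{Lip}(\|\cdot\|_A)/M(A)$ — this must produce exactly the factor $(M/M_q)$ rather than something weaker, which ultimately forces the appearance of the \emph{square} of the ratio $M/M_q$ in the statement. Replacing the pointwise Lipschitz bound by an averaged one, or combining it with a preliminary low $M^\ast$-estimate in the spirit of Klartag \cite{K-geom}, is where the delicacy lies; the rest of the argument is a mechanical reshuffling of Fubini and Jensen.
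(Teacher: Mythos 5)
Your proposal follows the cap/Lipschitz (net-type) route, which is precisely the argument the paper warns does not work here (``this cannot be achieved now by the standard net argument\dots To overcome this obstacle Klartag and Vershynin devise a dimension lift technique based on the low $M$-inequality''), and several of its concrete steps fail. First, the radial reduction is broken: you invoke $\mathbb E\|G_F\|_2^{-q}\asymp m^{-q/2}$ ``for $q\le m/2$'', but the hypothesis of the lemma is $m\le q$, and for $q\ge m$ the moment $\mathbb E\|G_F\|_2^{-q}$ is infinite; consequently the claimed identity $\int_{G_{n,m}}\mathbb E_{G_F}\|G_F\|_A^{-q}\,d\nu_{n,m}(F)=\mathbb E\|G\|_A^{-q}$ is false (the left side carries a $\chi_m$ radial factor, the right side a $\chi_n$ one; only the spherical Fubini identity $\int_{G_{n,m}}\mathbb E_{\theta\in S_F}\|\theta\|_A^{-q}\,d\nu=\mathbb E_{\theta\sim\sigma}\|\theta\|_A^{-q}$ survives). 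Second, your ``crucial input'' $\mathrm{Lip}(\|\cdot\|_A)\lesssim M/M(A)$ is not even scale-invariant; the correct general fact is $\mathrm{Lip}(\|\cdot\|_A)\le C\sqrt{n}\,M(A)\asymp \mathbb E\|G\|_A$, and feeding it into the cap estimate leaves a parasitic factor of order $\sqrt n$ relative to the target $C M(A)/M_{-q}^2$. Moreover, since the cap radius $\alpha\asymp r(F)/\mathrm{Lip}$ depends on $r(F)$, the cap bound actually yields $r(F)^{-(q-m)}\le C^q\,\mathrm{Lip}^m\,\mathbb E_{\theta\in S_F}\|\theta\|_A^{-q}$, i.e.\ a moment of order $q-m$ rather than $m$, which degenerates entirely at $q=m$ (allowed by the hypothesis) and blows up as $q\downarrow m$. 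Third, the closing step ``$\sqrt m\lesssim M/M_q$'' is false: for $A=B_2^n$ and $m=q=n/8$ one has $M/M_q\asymp 1$ while $\sqrt m\asymp\sqrt n$.

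The paper itself does not reprove this lemma; it is quoted from Klartag--Vershynin, whose proof replaces the pointwise Lipschitz/cap mechanism by Klartag's low-$M$ estimate from \cite{K-geom}. That inequality bounds the $m$-th moment of $\sup_{\theta\in S_F}\|\theta\|_A^{-1}$ over random $F$ directly by a negative $L_q$-average over the whole sphere, with the correct dependence on $n,m,q$, precisely because no a priori lower bound on the inradius of $A\cap F$ (equivalently, no useful bound on $\mathrm{Lip}(\|\cdot\|_A)$ restricted to $F$) is available in this regime. To repair your argument you would need to import that result (or reprove it), at which point the cap computation becomes unnecessary.
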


The latter shows that the diameter of a random section is controlled by the tightness of the reverse H\"older inequality for the negative moments.
In view of Lemma \ref{lem:neg-moms}, Lemma \ref{lem:dim-lift}
and the findings of this paper we are able to prove the following (geometric reformulation of Theorem \ref{thm:main-4}):

\begin{theorem} \label{thm:main-4a}
Let $A$ be a centrally symmetric convex body in $\mathbb R^n$ and let $ \| \cdot \|_A$  be the induced norm. 
\begin{enumerate}
	\item If $\|\cdot\|_A$ is $1$-unconditional in the position of minimal $M$, or $\ell$-position, or $w^{1,1}$--position, then 
	for any $\delta\in (0,1/3)$ and $k\ls cn^{1-C\delta^2}$, the random $k$-dimensional subspace $E$ of $\mathbb R^n$ satisfies
	\[
	A \cap E \subseteq \frac{C\delta^{-2}}{M(A)} B_E,
	\] with probability greater than $1-e^{-cn^{1-C\delta^2}}$.
	
	\item In the general case, there exists an invertible linear map $T$ with the following property: for any $\delta\in (0, 1/3)$
	and $m \ls n^{1/4-C\delta^2}$, 
	the random $m$-dimensional subspace $F$ satisfies
	\begin{align*} \label{eq:main-3}
	TA \cap F \subseteq  \frac{C\delta^{-2}}{ M(TA)} B_F ,
	\end{align*} with probability greater than $1-e^{-cn^{1/4-C\delta^2}}$, where $C,c>0$ are universal constants. 
\end{enumerate}
\end{theorem}

\paragraph{\bf Quermassintegrals.} Similar Dvoretzky-type results can be proved for 
other geometric quantities associated with the convex body than the negative moments of 
the norm. For example, as an application of the  dimension lift (Lemma \ref{lem:dim-lift}) and the Gaussian deviation inequality (Theorem~\ref{thm:sdi}) 
it is proved in \cite{PPV} a quantitive  reversal of the classical Alexandrov inequality. The latter says that the sequence of  quermassintegrals of a convex body
$A$, appropriately normalized, is monotone. Recall that the $k$-th quermassintegral of $A$ is the average of volume of the random $k$-dimensional projection
of $A$, that is
\[
Q_k(A) := \left( \frac{1}{{\rm vol}(B_2^k)} \int_{G_{n,k}} {\rm vol}_k(P_F A) \, d\nu_{n,k}(F)\right)^{1/k},
\] where $\nu_{n,k}$ is the (unique) Haar probability measure invariant under the orthogonal group action on the Grassmannian $G_{n,k}$. With
this normalization Alexandrov's inequality reads as follows: 
\[
Q_n(A) \ls Q_{n-1}(A) \ls \ldots \ls Q_1(A).
\] Note that $Q_n(A)$ is the volume radius of $A$, $Q_{n-1}(A)$ is (an appropriate multiple of) the surface area of $A$, while $Q_1(A)$ stands for 
the mean width. We refer the reader to Schneider's monograph \cite{Schn} for related background material and an excellent exposition 
in convex geometry. The main result of \cite{PPV} asserts that a long initial segment of the above sequence is essentially constant, where the length and the almost constant behavior is quantified in terms of the parameter $1/\beta(A^\circ)$, namely
\[
Q_k(A) \gr \left( 1-\sqrt{k\beta(A^\circ) \log \left(\frac{c}{k\beta(A^\circ)}\right) } \right) Q_1(A),
\] where $A^\circ$ is the polar body of $A$.

The strong small-ball probabilities established in the present paper yield an isomorphic reversal (up to constant) which extends 
to a polynomial order length, regardless the order of magnitude of $\beta(A)$. More precisely, we have the following:

\begin{theorem}
Let $A$ be a centrally symmetric convex body in $\mathbb R^n$. Let $A^\circ$ be its polar body, that is 
\[
A^\circ=\{x\in \mathbb R^n : \langle x,y \rangle \ls 1 \, \forall y\in A\}.
\]
Then, 
\begin{enumerate}
	\item If $A$ is $1$--unconditional, and $A^\circ$ is in the $\ell$-position, or position of minimal $M$, or $w^{1,1}$--position, 
	then for any $\delta\in (0,1/2)$ and for any $k\ls cn^{1-C\delta^2}$ we have
	\[
	Q_k(A) \gr c \delta^2 Q_1(A).
	\]
	\item In the general case there exists a linear image $\widetilde A$ of $A$ such that for any $\delta\in (0,1/2)$ and for any $k\ls cn^{1/4-C\delta^2}$ 
	we have
	\[
	Q_k(\widetilde A) \gr c \delta^2 Q_1(\widetilde A).
	\]
\end{enumerate}
\end{theorem}

\noindent {\it Proof.} We prove only (1). The other case follows in similar fashion. 
We apply Theorem \ref{thm:main-4} in the dual setting to the polar body $A^\circ$ since $A^\circ \cap F = P_F(A)$ for 
any subspace $F$. To this end, let $A^\circ$ be in one of
the announced positions and let $\delta\in (0,1/2)$. Then, for $k\ls cn^{1-C\delta^2}$ and by taking into account the observation
$Q_1(A) \asymp M(A^\circ)$ we derive that the set
\[
{\mathcal F} = \left\{ F\in G_{n,k} : P_F(A) \supseteq c\delta^2  Q_1(A) B_F \right\},
\] has measure $\nu_{n,k}(\mathcal F) \gr 1-e^{-cn^{1-C\delta^2}}$ by Theorem  \ref{thm:main-4}. Therefore,
\begin{align*}
Q_k(A) &= \left( \frac{1}{{\rm vol}(B_2^k)} \int_{G_{n,k}} {\rm vol}_k(P_F A) \, d\nu_{n,k}(F)\right)^{1/k} \\
&\gr \left( \frac{1}{{\rm vol}(B_2^k)} \int_{\mathcal F} {\rm vol}_k(P_F A) \, d\nu_{n,k}(F)\right)^{1/k} \\
&\gr c\delta^2 Q_1(A) \nu_{n,k}(\mathcal F)^{1/k} \gr c' \delta^2 Q_1(A),
\end{align*} which proves the assertion. \prend

\newpage

%%%%%%%%%%%%%%%%%%%%
\section{Small deviations for norms}
%%%%%%%%%%%%%%%%%%%%

The goal of this section is to obtain lower deviation estimates
$\Prob\big\{ \|G\|< (1-\varepsilon) \Exp\|G\| \big\}$, $\varepsilon\in(0,1/2]$,
for norms $\|\cdot\|$ satisfying certain ``balancing'' conditions.
The following two theorems are consequences of the main technical result of the section (Theorem~\ref{th: smalldev main}).
\begin{theorem}\label{th: smdev unc}
For any $1$--unconditional norm $\|\cdot\|$ in $\R^n$
in the position of minimal $M$, or $\ell$--position, or $w^{1,1}$--position,
we have
$$\Prob\big\{ \|G\|< (1-\varepsilon) \Exp\|G\| \big\}\ls 3\exp(-n^{c\varepsilon}),\quad \varepsilon\in(0,1/2],$$
where $c>0$ is a universal constant.
\end{theorem}
For the standard $\ell_\infty^n$--norm in $\R^n$, a
reverse estimate is known \cite{Sch-cube}:
$$c\exp( - n^{C\varepsilon}) < \Prob\big\{ \|G\|_\infty< (1-\varepsilon) \Exp\|G\|_\infty \big\},\quad \varepsilon\in(0,1/2],$$
which implies that the above result is optimal (in an appropriate sense).

For general norms, we are able to obtain corresponding deviation estimates in a certain (non-classical) position:
\begin{theorem}\label{th: smdev gen}
For any norm $\|\cdot\|$ in $\R^n$ there is an invertible linear transformation $T$ such that
$$\Prob\big\{ \|T(G)\|< (1-\varepsilon) \Exp\|T(G)\| \big\}\ls 3\exp(-n^{c\varepsilon}),\quad \varepsilon\in(0,1/2],$$
where $c>0$ is a universal constant.
\end{theorem}
%For $1$--unconditional norms, we can choose certain classical positions to get these deviation estimates:

\bigskip

The proof of the main result follows a different strategy compared to the results of Section~\ref{s: Small-ball probability estimates}.
To highlight the basic technical issues with attempting to reuse the relation
$P_tf(x) \ls e^{-t} f(x) +\sqrt{1-e^{-2t}}\mathbb Ef(G)$ together with Theorem~\ref{thm:sdh} in the context of small deviations,
consider the standard $\ell_\infty$--norm in $\R^n$. Assume we want
to show
that $\Prob\big\{ \|G\|_\infty< (1-\varepsilon) \Exp\|G\|_\infty \big\}\ls 3\exp(-n^{c\varepsilon})$ for all $\varepsilon\in(0,1/2]$.
Using the relation and the  theorem, we may write
\begin{align*}
\mathbb P \left( \|G\|_\infty \ls (1-\varepsilon) \mathbb E \|G\|_\infty \right) 
&\ls \mathbb P \left( P_t\|G\|_\infty \ls \left( (1-\varepsilon) e^{-t}+\sqrt{1-e^{-2t}} \right) \mathbb E [P_t\|G\|_\infty] \right)\\
&=\mathbb P \left( P_t\|G\|_\infty \ls \mathbb E [P_t\|G\|_\infty]-
y\,\mathbb E [P_t\|G\|_\infty]\right)\\
&\ls \exp\left(-c\, y^2\log(n)/\big(\mathbb E\|\nabla\, P_t\|G\|_\infty\|_2^2 \big)\right),
\end{align*}
where $t>0$ is any number such that $y:=1-(1-\varepsilon) e^{-t} -\sqrt{1-e^{-2t}}\gr 0$.
It is not difficult to see that when $\varepsilon$ is small,
the last inequality forces one to take $t$ of order at most $O(\varepsilon^2)$,
with $y$ being at most of order $\varepsilon$. The estimate then becomes too weak to yield an optimal result.

Although the operator $P_t$ still plays a fundamental role in this section, instead of ``replacing'' the original norm with
$P_t\|G\|$ (as is done in the proofs of Theorems~\ref{thm:hyper-sb} and~\ref{thm:super-sb}),
we will replace it with an auxiliary seminorm by carefully choosing a subset of norming functionals for $\|\cdot\|$,
in such a way that the seminorm satisfies strong lower deviation estimates.
In that respect, the proof is similar to the approach from Subsection~\ref{subs: lm position},
although the actual construction is completely different.

The strategy of proving Theorems~\ref{th: smdev unc} and~\ref{th: smdev gen} can roughly be described
as follows.
Given a norm $f$ in a ``balanced'' position, we consider two situations. 
\begin{itemize}
\item If the variance of the norm $f$, or the expected
squared length of the gradient, is
small compared to $(\Exp f(G))^2$ then 
the statements from Section~\ref{s: Small-ball probability estimates} give the required estimate (in fact, a much stronger inequality).
\item Otherwise, one is able to construct a seminorm $\semi(\cdot)$ dominated by $f$, with a very small Lipschitz constant, and
with $\Exp \semi(G)$ very close to $\Exp f(G)$. The lower deviation probabilities for $f$ can then be 
bounded by corresponding probabilities for $\semi(G)$, which in turn can be efficiently estimated since
the Lipschitz constant of the seminorm $\semi(\cdot)$ is small.
\end{itemize}
The reason why the dichotomy works (i.e every appropriately ``balanced'' norm must satisfy at least one of one of the two above conditions)
is rooted in the hypercontractivity properties of the Gaussian measure.
This will be made rigorous further.

\subsection{Construction of seminorms}\label{subs: seminorms}
Let $H\gr 1$, $\tau\in[\log^{-1} n,1/2]$ and $\delta\in(0,1]$ be parameters.
Given a norm $f(\cdot)=\|\cdot\|$ in $\R^n$ with $H\gr\unc(\|\cdot\|,\{e_i\}_{i=1}^n)$,
and
\begin{equation}\label{eq: ndelta cond}
n^{-\delta}\gr \frac{1}{R(f)} = \frac{\sum_{i=1}^n (\Exp | \partial_i f(G) |)^2 }{\Exp \|\nabla f(G)\|_2^2},
\end{equation}
and a collection of Borel subsets $(\Event_t)_{t\gr \tau}$ of $\R^n$,
define the seminorm $\semi(\cdot)$ as follows:
For every $t\gr\tau$, set
\begin{equation}\label{eq: F definition}
F_t:=\big\{x\in\R^n:\;\langle x,P_t\nabla f(x) \rangle\gr (1-4t)\Exp f(G)\quad\mbox{and}\quad
\|P_t \nabla f(x)\|_2\ls \|\nabla f\|_{L^2(\gamma_n)}\,n^{-\delta t/8}\big\},
\end{equation}
and
$$w(x):=\sup\big\{\langle x,P_t \nabla f(y)\rangle:\;y\in F_t\setminus \Event_t,\;t\gr \tau\big\},\quad x\in\R^n$$
(if $F_t\setminus \Event_t=\emptyset$ for all $t\gr \tau$ then we set $w\equiv 0$).
Then for every vector $x=(x_1,x_2,\dots,x_n)\in\R^n$ we define
$$\semi(x):=\max\Big(w(x),\frac{1}{H}\max\limits_{\epsilon_1,\dots,\epsilon_n\in\{-1,1\}}w\Big(\sum_{i=1}^n\epsilon_i x_ie_i\Big)\Big).$$
The next statement follows from the definition:
\begin{lemma}\label{l: apiurls;,m} For any parameters $H,\tau,\delta$ and Borel subsets $(\Event_t)_{t\gr \tau}$,
the seminorm $\semi(\cdot)$ satisfies
\begin{itemize}
\item $\unc(\semi(\cdot),\{e_i\}_{i=1}^n)\ls H$;
\item $\semi(x)\ls f(x)$ for all $x\in\R^n$;
\item $\Lip(\semi)\ls \Lip(f)\,n^{-\delta \tau/8}$;
\item $\semi(x)\gr (1-4t)\Exp f(G)$ for every $t\gr \tau$ and $x\in F_t\setminus \Event_t$.
\end{itemize}
\end{lemma}
While the first, third and fourth assertions are straightforward, we remark that the second assertion $\semi(x)\ls f(x)$, $x\in\R^n$, follows since for every $x\in\R^n$ the quantity
$\langle x,P_t\nabla f(x) \rangle$ is a convex combination of inner products $\langle x,u\rangle$, with $u$ taking values in the unit ball of the dual space for $(\R^n,f)$, and therefore
$\langle x,P_t\nabla f(x) \rangle\ls f(x)$.

Note that if $F_t$ and $\Event_t$ are constructed in such a way that $\bigcup\limits_{t\gr \tau}(F_t\setminus \Event_t)$ has the standard Gaussian measure close to one, the fourth assertion of the lemma
implies a strong lower bound for the expectation $\Exp\,\semi(G)$. This, together with the condition $\semi(\cdot)\ls f(\cdot)$ would allow to reduce the problem of estimating
$\Prob\{f(G)< (1-\varepsilon) \Exp\,f(G)\}$ to bounding a quantity $\Prob\{\semi(G)< (1-\tilde c\varepsilon) \Exp\,\semi(G)\}$, which can be done by using the condition that the Lipschitz constant of $\semi(\cdot)$
is very small. The crucial assumption \eqref{eq: ndelta cond} is introduced specifically to guarantee that the sets $F_t$ and $\Event_t$ satisfying the needed conditions can be found (as we will show later).

We will view the above procedure as a construction of a family $\mathcal F$ of seminorms parameterized by
$H,\tau,\delta,f$ and Borel sets $(\Event_t)_{t\gr\tau}$.
In what follows, by $\semi(\cdot)$ we will understand a particular seminorm from the family $\mathcal F$ for a given realization of
parameters (which are clear from the context and are either given explicitly or required to take values
from a specific range).

\subsection{Lower bound for the expectation $\Exp\semi(G)$}
In what follows, $G$ and $Z$ denote independent standard Gaussian vectors in $\R^n$,
and for any $t>0$, by $G_t$ we denote the random vector $e^{-t}G+\sqrt{1-e^{-2t}}Z$.
The goal of this subsection is to prove

\begin{proposition}\label{p: piul,ams}
Let the norm $f(\cdot)$ satisfy \eqref{eq: ndelta cond}, as well as the conditions $\delta\sqrt{\log n}\,\Lip(f)\ls \Exp f(G)$ and
$$\Prob\big\{|f(G)-\Exp f(G)|\gr s\, \Exp f(G)\big\}\ls 2\exp(-\delta s\log n),\quad s\gr 0,$$
for some $\delta\in(0,1]$.
Let $H>0$, $\tau\in[\log^{-1}n,1/2]$ and
assume that the Borel sets $\Event_t$, $t\in[\tau,1/2]$, satisfy $\Prob(\Event_t)\ls 2n^{-\delta t/4}$.
Then the seminorm $\semi(\cdot)$ defined above with parameters
$H,\tau,\delta$, $(\Event_t)_{t\gr \tau}$, satisfies
$$\Exp \semi(G)\gr (1-C_{\ref{p: piul,ams}}\delta^{-2}\tau)\Exp f(G)$$
for some universal constant $C_{\ref{p: piul,ams}}>0$.
\end{proposition}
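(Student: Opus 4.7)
The plan is to lower bound $\Exp\semi(G)$ by exploiting $\semi(G)\gr w(G)$ together with a specific witness in the supremum defining $w$. On the event $\mathcal A:=\{G\in F_\tau\setminus\Event_\tau\}$, one can take $y=G$, $t=\tau$, obtaining $w(G)\gr\langle G,P_\tau\nabla f(G)\rangle\gr(1-4\tau)\Exp f(G)$ from the first defining inequality of $F_\tau$. Since $w(G)\gr 0$ always, $\Exp w(G)\gr (1-4\tau)\Exp f(G)\cdot \Prob(\mathcal A)$, so the core quantitative task reduces to showing $\Prob(\mathcal A^c)\ls C\delta^{-2}\tau$.

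Writing $\Prob(\mathcal A^c)\ls \Prob(\Event_\tau)+\Prob((F_\tau)^c)$, the first term is $\ls 2n^{-\delta\tau/4}$ by hypothesis, and $F_\tau$ is the intersection of two conditions. For the $L^2$-norm bound $\|P_\tau\nabla f(x)\|_2\ls \|\nabla f\|_{L_2(\gamma_n)}n^{-\delta\tau/8}$: Nelson's hypercontractivity in the form of Lemma~\ref{lem:A-1}, Jensen's inequality, and assumption \eqref{eq: ndelta cond}, following the proof of Proposition~\ref{prop:key-1}, yield
\[
\Exp\|P_\tau\nabla f(G)\|_2^2\ls \|\nabla f\|_{L_2(\gamma_n)}^2\, R(f)^{-\tanh\tau}\ls \|\nabla f\|_{L_2(\gamma_n)}^2\, n^{-\delta\tau/2}
\]
(using $\tanh\tau\gr\tau/2$ for $\tau\ls 1/2$), and Markov produces failure probability $\ls n^{-\delta\tau/4}$. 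For the inequality $\langle x,P_\tau\nabla f(x)\rangle\gr(1-4\tau)\Exp f(G)$, I would use convexity of $P_\tau f$ together with $P_\tau\nabla f=e^\tau\nabla P_\tau f$, the Jensen bound $P_\tau f(x)\gr e^{-\tau}f(x)$, and the explicit value $P_\tau f(0)=\sqrt{1-e^{-2\tau}}\Exp f$, to derive $\langle x,P_\tau\nabla f(x)\rangle \gr f(x)-\sqrt{e^{2\tau}-1}\,\Exp f(G)$; together with $\langle x,P_\tau\nabla f(x)\rangle\ls f(x)$ (from $\|\nabla f\|_*\ls 1$), this traps the nonnegative random variable $D(G):=f(G)-\langle G,P_\tau\nabla f(G)\rangle$ between $0$ and $O(\sqrt{\tau})\Exp f$ pointwise, while $\Exp D=(1-e^{-\tau})\Exp f\ls \tau\Exp f$. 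Combining these bounds with the two-sided deviation hypothesis for $f(G)$ at an appropriate scale (chosen to balance the $n^{-\delta\cdot\textrm{scale}}$ tail against the target $\tau/\delta^2$) yields $\Prob((F_\tau)^c)\ls C\delta^{-2}\tau$, and hence the claimed lower bound on $\Exp\semi(G)$.

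The main obstacle is the first defining inequality of $F_\tau$: the naive pointwise estimate $\langle x,P_\tau\nabla f(x)\rangle\gr f(x)-O(\sqrt{\tau})\Exp f$ loses a square-root in $\tau$, which is inconsistent with the $O(\tau)$-gap demanded by $(1-4\tau)\Exp f$ when $\tau$ is small. Closing this gap requires exploiting the sharp mean estimate $\Exp D\ls \tau\Exp f$ rather than the worst-case pointwise bound, together with the Lipschitz hypothesis $\delta\sqrt{\log n}\,\Lip(f)\ls \Exp f$, which forces $\Lip(f)\ls \Exp f/(\delta\sqrt{\log n})$ and ensures that the error contributions (e.g., in a Cauchy--Schwarz/tail decomposition of $\Exp[f(G)\mathbf{1}_{\mathcal A^c}]$ controlled via the deviation hypothesis) are dominated by the main term. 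This delicate accounting, which converts a weak pointwise bound plus a sharp mean bound into a probabilistic estimate of the correct order $\tau/\delta^2$, is the technical heart of the proposition.
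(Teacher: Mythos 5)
Your reduction to a single time $t=\tau$ does not work, and this is the essential gap. You bound $\Exp w(G)\gr(1-4\tau)\Exp f(G)\,\Prob(\mathcal A)$ with $\mathcal A=\{G\in F_\tau\setminus\Event_\tau\}$ and then need $\Prob(\mathcal A^c)\ls C\delta^{-2}\tau$. But the first defining condition of $F_\tau$, namely $\langle G,P_\tau\nabla f(G)\rangle\gr(1-4\tau)\Exp f(G)$, asks the random variable $\langle G,P_\tau\nabla f(G)\rangle$ (whose mean is $e^{-\tau}\Exp f(G)\approx(1-\tau)\Exp f(G)$, as you correctly note) to deviate below its mean by only $O(\tau)\Exp f(G)$, while its fluctuations live at scale $\delta^{-1}\sqrt{\tau/\log n}\,\Exp f(G)$ with sub-exponential tails (this is the content of the paper's Lemma~\ref{l: apon,mfpauh}, which is the actual resolution of the ``$\sqrt{\tau}$ versus $\tau$'' obstacle you flag but do not close). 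The resulting failure probability is of order $\exp(-c\delta\sqrt{\tau\log n})$, and for $\tau$ at the bottom of the admissible range, $\tau=\log^{-1}n$, this is $\exp(-c\delta)$ --- a constant, not $O(\delta^{-2}\tau)=O(\delta^{-2}/\log n)$. No amount of ``delicate accounting'' fixes this: the bound $\exp(-c\delta\sqrt{\tau\log n})$ is essentially the truth for that event, so the target $\Prob(\mathcal A^c)\ls C\delta^{-2}\tau$ is unattainable from a single time slice.

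The paper's proof avoids this by using the whole family $(F_s\setminus\Event_s)_{s\in[\tau,1/4]}$ through the layer-cake formula: it writes $\Exp\semi(G)\gr(1-4\tau)\Exp f(G)-4\Exp f(G)\int_\tau^{1/4}\Prob\{\semi(G)<(1-4s)\Exp f(G)\}\,ds$ and uses that $G\in F_s\setminus\Event_s$ forces $\semi(G)\gr(1-4s)\Exp f(G)$, with $\Prob\{G\notin F_s\setminus\Event_s\}\ls 2\exp(-c'\delta\sqrt{s\log n})$. The trade-off is that larger $s$ gives a weaker threshold but a much better success probability, and the integral $\int_\tau^{1/4}\exp(-c'\delta\sqrt{s\log n})\,ds\ls C\delta^{-2}(\log n)^{-1}\ls C\delta^{-2}\tau$ delivers exactly the claimed error term. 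Your handling of the second condition of $F_\tau$ (hypercontractivity plus Markov, as in Lemma~\ref{l: ooyeksn}) is fine, but the missing ideas are (i) the integration over the time parameter $s$, and (ii) the proof of the sub-exponential tail for $\langle G,P_t\nabla f(G)\rangle$ below $e^{-t}f(G)$, which in the paper proceeds by a dyadic decomposition over deviation levels and an exchange of the roles of $G$ and $G_t$ combined with the concentration hypothesis on $f(G)$, not by pointwise convexity estimates.
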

\begin{remark}
We note that if the assumptions are satisfied, the above proposition, combined with \eqref{eq:conc-L}, allows to obtain the optimal
lower deviation inequality in the regime $\varepsilon\in[C'\log\log n/\log n,1/2]$
(taking $\Event_t=\emptyset$ for all admissible $t$), namely
$$\Prob\big\{f(G)\ls (1-\varepsilon)\Exp f(G)\big\}\ls 3\exp(-n^{c'\varepsilon}),$$
for some $c',C'>0$ depending only on $\delta$.
Indeed, setting $\tau:=\delta^2\varepsilon/(2C_{\ref{p: piul,ams}})$, we get
\begin{align*}
\Prob\big\{f(G)\ls (1-\varepsilon)\Exp f(G)\big\}
&\ls \Prob\big\{\semi(G)\ls (1-\varepsilon)(1-C_{\ref{p: piul,ams}}\delta^{-2}\tau)^{-1}\Exp \semi(G)\big\}\\
&\ls \exp\bigg(-\frac{(\Exp \semi(G))^2\big(1-(1-\varepsilon)(1-C_{\ref{p: piul,ams}}\delta^{-2}\tau)^{-1}\big)^2}{2\,\Lip(\semi)^2}\bigg),
\end{align*}
where, in view of Lemma~\ref{l: apiurls;,m} and the assumptions of Proposition~\ref{p: piul,ams},
$$\Lip(\semi)^2\ls \Lip(f)^2\,n^{-\delta \tau/4}\ls \delta^{-2}(\log n)^{-1}(\Exp f(G))^2
\,n^{-\delta \tau/4},$$
and where for all $\varepsilon\ls 1/2$, we have $1-(1-\varepsilon)(1-C_{\ref{p: piul,ams}}\delta^{-2}\tau)^{-1}\gr c''\varepsilon$ for a universal constant $c''>0$.
Thus, we get
$$
\Prob\big\{f(G)\ls (1-\varepsilon)\Exp f(G)\big\}
\ls \exp\bigg(-\frac{(c''\delta)^2 (\log n)\varepsilon^2\,n^{\delta^3\varepsilon /(8C_{\ref{p: piul,ams}})}}{8}\bigg).
$$
When $\varepsilon\gr C'\log\log n/\log n$ for a sufficiently large $C'$ depending on $\delta$, we have $\frac{(c''\delta)^2 (\log n)\varepsilon^2\,n^{\delta^3\varepsilon /(16C_{\ref{p: piul,ams}})}}{8}\gr 1$,
so the last expression is bounded from above by $\exp\big(-n^{\delta^3\varepsilon /(16C_{\ref{p: piul,ams}})}\big)$ implying the result.
Note, on the other hand, that in the regime $\varepsilon\ll \log\log n/\log n$, the quantity $(\log n)\varepsilon^2$ is much smaller than $n^{-\delta^3\varepsilon /(8C_{\ref{p: piul,ams}})}$, so the above computations
do not yield a non-trivial bound.

\medskip

In order to extend the range to all $\varepsilon\in(0,1/2]$,
we would need an estimate which would cancel the influence of $\varepsilon^2$ in the power of the exponent.
If it were true that $\semi(\cdot)$ is superconcentrated in the sense that $\Var\semi(G)=O(\log^{-1} n)\, \Lip(\semi)^2$, we would be able to complete the proof given the assumptions of Proposition~\ref{p: piul,ams}.
Indeed, in that case, by Theorem~\ref{thm:sdi},
$$\Prob\big\{f(G)\ls (1-\varepsilon)\Exp f(G)\big\}
\ls \Prob\big\{\semi(G)\ls (1-\varepsilon)(1-C\delta^{-2}\tau)^{-1}\Exp \semi(G)\big\}
\ls 2 \exp\big(-c\varepsilon^2 (\Exp \semi(G))^2/\Var\semi(G)\big),$$
where $\Var\semi(G)= O(\log^{-1} n)\,  \Lip(\semi)^2=O(\delta^{-2}(\log n)^{-2})\,(\Exp f(G))^2
\,n^{-\delta \tau/4}.$ Essentially repeating the above computations, we would then get
$$
\Prob\big\{f(G)\ls (1-\varepsilon)\Exp f(G)\big\}\ls 2\exp\big(-\hat c (\log n)^2\varepsilon^2\,n^{\hat c\varepsilon}\big)
$$
for some $\hat c>0$ depending only on $\delta$. For $\varepsilon> \frac{1}{\sqrt{\hat c}\log n}$, the quantity $\hat c (\log n)^2\varepsilon^2$ is greater than one,
so the last bound transforms to the required estimate $2\exp(-n^{\hat c\varepsilon})$. For $\varepsilon\ls \frac{1}{\sqrt{\hat c}\log n}$,
on the other hand, the probability estimate $3\exp(-n^{c'\varepsilon})$ is trivial as long as $c'>0$ is chosen sufficiently small.

In order to obtain the strong bounds on the variance
$\Var\semi(G)$, we will need $\semi(\cdot)$ to be ``well balanced'' in an appropriate sense.
Note that although the norm $f$ is assumed to be ``well balanced'' there is no trivial argument which would guarantee analogous conditions for the seminorm $\semi(\cdot)$.
Thus, our main goal is to carefully choose
the events $\Event_t$ in the definition of $\semi(\cdot)$ that would provide the required properties.
That will be done in the next subsections.
\end{remark}

\begin{lemma}\label{l: aux a;oia;slkfmsflm}
Assume that a norm $f(\cdot)$ in $\R^n$ satisfies $\delta\sqrt{\log n}\,\Lip(f)\ls \Exp f(G)$ for some parameter
$\delta>0$. Then for any $t>0$ and $x\in\R^n\setminus\{0\}$ such that the gradient of $f$ at $x$ is well defined, we have
$$\Prob\big\{\langle e^{-t}x+\sqrt{1-e^{-2t}}Z,\nabla f(x)
\rangle\ls e^{-t}f(x)-\delta^{-1}\sqrt{t/\log n}\,u\,\Exp f(G)\big\}\ls 2\exp(-cu^2),\quad u>0,$$
where $c>0$ is a universal constant.
\end{lemma}
\begin{proof}
Obviously,
$$\langle e^{-t}x+\sqrt{1-e^{-2t}}Z,\nabla f(x)\rangle=e^{-t}f(x)
+\sqrt{1-e^{-2t}}\langle Z,\nabla f(x)\rangle.$$
The inner product
$\langle Z,\nabla f(x)\rangle$ is equidistributed with $g\|\nabla f(x)\|_2$, where $g$ is a standard Gaussian variable.
Further, $\|\nabla f(x)\|_2\ls\Lip(f)\ls \Exp f(G)/(\delta\sqrt{\log n})$, by the assumptions
of the lemma. The result follows.
\end{proof}

\begin{lemma}\label{l: apon,mfpauh}
Let the norm $f(\cdot)$ satisfy $\delta\sqrt{\log n}\,\Lip(f)\ls \Exp f(G)$ and
$$\Prob\big\{|f(G)-\Exp f(G)|\gr s \Exp f(G)\big\}\ls 2\exp(-\delta s\log n),\quad s\gr 0,$$
for some parameter $\delta\in(0,1]$. Then for any $u\gr 1$ and $t\in[\log^{-1} n,1/2]$ we have
$$\Prob\big\{\langle G,P_t \nabla f(G)\rangle \ls e^{-t}f(G)-\delta^{-1}\sqrt{t/\log n}\,u\,\Exp f(G)\big\}
\ls 2\exp(-c u),$$
for some universal constant $c>0$.
\end{lemma}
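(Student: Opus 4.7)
The plan is to derive this sub-exponential lower tail by combining Lemma~\ref{l: aux a;oia;slkfmsflm} with the Gaussian swap symmetry of the pair $(G,G_t)$, a moment-to-tail conversion via Jensen, and a comparison of $P_tf(G)$ with $f(G)$ based on the assumed concentration of $f(G)$.

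First, I would apply Lemma~\ref{l: aux a;oia;slkfmsflm} pointwise at $x=G$ (its assumptions hold for a.e.\ $G$) and integrate in $G$, obtaining
\[
\Prob\big\{\langle G_t,\nabla f(G)\rangle \ls e^{-t}f(G)-\lambda_0 u\big\}\ls 2\exp(-cu^2),\quad u\gr 1,
\]
where $\lambda_0:=\delta^{-1}\sqrt{t/\log n}\,\Exp f(G)$. Since the joint law of $(G,G_t)$ is invariant under the swap $G\leftrightarrow G_t$, the same inequality holds with $G$ and $G_t$ exchanged. Set $V:=\big(e^{-t}f(G_t)-\langle G,\nabla f(G_t)\rangle\big)^+\gr 0$; the tail bound just obtained gives $\|V\|_{L^p}\ls C\lambda_0\sqrt{p}$ for all $p\gr 1$, so by Jensen the conditional expectation $M(G):=\Exp[V\mid G]$ satisfies $\|M\|_{L^p}\ls C\lambda_0\sqrt{p}$ as well, hence $\Prob\{M(G)>\lambda_0 u\}\ls 2\exp(-cu^2)$ for $u\gr 1$.

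Since $V$ dominates $e^{-t}f(G_t)-\langle G,\nabla f(G_t)\rangle$ pointwise, conditioning on $G$ and using $\Exp[\nabla f(G_t)\mid G]=P_t\nabla f(G)$ together with $\Exp[f(G_t)\mid G]=P_tf(G)$ yields
\[
\langle G,P_t\nabla f(G)\rangle \gr e^{-t}P_tf(G) - M(G).
\]
To pass from $P_tf(G)$ to $f(G)$ I would use that $P_tf$ is $e^{-t}\Lip(f)$-Lipschitz with $\Exp P_tf(G)=\Exp f(G)$; standard Gaussian concentration combined with $\delta\sqrt{\log n}\,\Lip(f)\ls \Exp f(G)$ furnishes a sub-Gaussian tail for $P_tf(G)$ about $\Exp f(G)$, which together with the assumed sub-exponential tail of $f(G)$ gives
\[
\Prob\big\{(f(G)-P_tf(G))^+>s\,\Exp f(G)\big\}\ls 4\exp(-c\delta s\log n).
\]
Choosing $s:=\lambda_0 u/(2\Exp f(G))$ and using $t\gr \log^{-1}n$ (so that $\sqrt{t\log n}\gr 1$), the right-hand side is $\ls 4\exp(-c'u)$.

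Finally, from the pointwise bound $\langle G,P_t\nabla f(G)\rangle-e^{-t}f(G)\gr -M(G)-e^{-t}(f(G)-P_tf(G))^+$, a union bound over the events $\{M(G)>\lambda_0 u/2\}$ and $\{e^{-t}(f(G)-P_tf(G))^+>\lambda_0 u/2\}$, each of probability $O(\exp(-cu))$, yields the claim. The main conceptual obstacle is that $\langle G,P_t\nabla f(G)\rangle$ depends on $G$ both as the point of evaluation \emph{and} through the averaging inside $P_t$; the swap symmetry of $(G,G_t)$ together with the $L^p$--Jensen reduction from $V$ to $M(G)$ is exactly what decouples these two roles. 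The linear-in-$u$ (rather than quadratic) exponent is forced precisely by the sub-exponential hypothesis on $f(G)$ entering the $f$-versus-$P_tf$ comparison, not by any slack in the symmetrization step itself.
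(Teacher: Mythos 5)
Your argument is genuinely different from the paper's: the paper works with the bad set $S=\{x:\langle x,P_t\nabla f(x)\rangle\ls e^{-t}f(x)-\lambda_0u\}$, runs a pigeonhole over conditional quantiles (the sets $S_i$), applies the swap symmetry and Lemma~\ref{l: aux a;oia;slkfmsflm} at the end, and solves an inequality for $\gamma_n(S)$; you symmetrize first and then transfer the tail of $e^{-t}f(G_t)-\langle G,\nabla f(G_t)\rangle$ to its conditional expectation by $L^p$ moments and Jensen. Steps 1--5 are correct: the exchangeability of $(G,G_t)$, the bound $\|V\|_{L^p}\ls C\lambda_0\sqrt p$, conditional Jensen, and the identity $\Exp[\nabla f(G_t)\mid G]=P_t\nabla f(G)$ all check out, and they even yield the stronger estimate $\Prob\{M(G)>\lambda_0u\}\ls 2e^{-cu^2}$.

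The gap is in the comparison of $f(G)$ with $P_tf(G)$. The ingredients you invoke --- Gaussian concentration of the $e^{-t}\Lip(f)$--Lipschitz function $P_tf$ together with the sub-exponential tail of $f(G)$ --- give, at the relevant scale $s=\lambda_0u/(2\Exp f(G))=\tfrac12\delta^{-1}\sqrt{t/\log n}\,u$, only
\begin{equation*}
\Prob\big\{\Exp f(G)-P_tf(G)>s\,\Exp f(G)\big\}\ls\exp(-c\delta^2s^2\log n)=\exp(-ctu^2/4),
\end{equation*}
and for $t=\log^{-1}n$ this is $\exp\big(-cu^2/(4\log n)\big)$, which is nowhere near $2\exp(-c'u)$ when $1\ls u\ls\log n$. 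Your intermediate claim $\Prob\{(f(G)-P_tf(G))^+>s\Exp f(G)\}\ls 4\exp(-c\delta s\log n)$ does not follow from sub-Gaussian concentration of $P_tf(G)$ in the regime $\delta s\ll 1$, which is exactly the regime occurring here; so the final union bound fails. The fix is to reuse your own Jensen device on this term as well: set $W:=(\Exp f(G)-f(G_t))^+$. Since $G_t$ is itself standard Gaussian, the hypothesis gives $\Prob\{W>s\,\Exp f(G)\}\ls 2e^{-\delta s\log n}$, hence $\|W\|_{L^p}\ls Cp\,\Exp f(G)/(\delta\log n)$; since $\Exp f(G)-P_tf(G)\ls\Exp[W\mid G]$, conditional Jensen and Markov yield $\Prob\{\Exp f(G)-P_tf(G)>s\,\Exp f(G)\}\ls e^{-c\delta s\log n}$ once $\delta s\log n\gr C$. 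With $s$ as above, $\delta s\log n=\tfrac12\sqrt{t\log n}\,u\gr u/2$, and combining this with the upper tail of $f(G)$ closes the proof. This is precisely the conditional-expectation analogue of the paper's direct comparison of $f(G)$ with $f(G_t)$, and with it your route is a clean alternative to the paper's argument.
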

\begin{proof}
Fix any $u\gr 1$.
Define a subset $S$ of $\R^n$ by setting
$$S:=\big\{x\in\R^n:\;\langle x,P_t \nabla f(x)\rangle \ls e^{-t}f(x)-\delta^{-1}\sqrt{t/\log n}\,u\,\Exp f(G)\big\}.$$
Fix for a moment any $x\in S$.
By definition,
$$e^{-t}f(x)-\delta^{-1}\sqrt{t/\log n}\,u\,\Exp f(G)\gr \langle x,P_t \nabla f(x)\rangle=
\Exp_Z\langle x,\nabla f({e^{-t}x+\sqrt{1-e^{-2t}}Z})\rangle,
%=\int\limits_0^\infty\Prob_Z\big\{\langle x,\nabla f({e^{-t}x+\sqrt{1-e^{-2t}}Z})\rangle\gr \tau\big\}\,d\tau
$$
where, as usual, $Z$ is the standard Gaussian vector independent from $G$.
The upper bound on the expectation then implies that there is an integer $i=i(x)\gr 1$ with
\begin{equation}\label{eq: 09482059870}
\Prob_Z\big\{\langle x,\nabla f({e^{-t}x+\sqrt{1-e^{-2t}}Z})\rangle\ls
e^{-t}f(x)-ci\delta^{-1}\sqrt{t/\log n}\,u\,\Exp f(G)\big\}\gr \frac{1}{i^2},
\end{equation}
for a sufficiently small universal constant $c>0$
(instead of $\frac{1}{i^2}$ on the right hand side, we can take the $i$-th element of arbitrary convergent series).
Indeed, this immediately follows from the obvious relation
\begin{align*}
\Exp \xi&\gr (a-b)\Prob\{\xi>a-b\}+\sum\limits_{i=1}^\infty \big(a-(i+1)b\big)\Prob\{a-(i+1)b<\xi\ls a-ib\}\\
&\gr a-b-2b\sum\limits_{i=1}^\infty\Prob\{\xi\ls a-ib\},
\end{align*}
which is valid for any variable $\xi$ and any parameters $a\in\R$ and $b>0$.

For a given $i\gr 1$, denote the collection of all $x\in S$ satisfying \eqref{eq: 09482059870} by $S_i$.
Since $\bigcup\limits_{i=1}^\infty S_i=S$, we get that there is $i_0\gr 1$
with $\gamma_n(S_{i_0})\gr c'\gamma_n(S)/i_0^2$.
Thus,
$$\Prob\big\{\langle G,\nabla f({G_t})\rangle\ls
e^{-t}f(G)-ci_0\delta^{-1}\sqrt{t/\log n}\,u\,\Exp f(G)\big\}\gr \frac{c'}{i_0^4}\gamma_n(S),$$
where $G_t:=e^{-t}G+\sqrt{1-e^{-2t}}Z$.
This implies
\begin{align*}
\Prob\big\{&\langle G,\nabla f({G_t})\rangle\ls
e^{-t}f(G_t)-\mbox{$\frac{c}{2}$}i_0\delta^{-1}\sqrt{t/\log n}\,u\,\Exp f(G)\big\}\\
&\gr \frac{c'}{i_0^4}\gamma_n(S)
-\Prob\big\{|f(G)-f(G_t)|\gr \mbox{$\frac{c}{2}$}i_0\delta^{-1}\sqrt{t/\log n}\,u\,\Exp f(G)\big\}.
\end{align*}
Observe that the probability on the left hand side of the inequality is equal to
$$\Prob\big\{\langle G_t,\nabla f(G)\rangle\ls
e^{-t}f(G)-\mbox{$\frac{c}{2}$}i_0\delta^{-1}\sqrt{t/\log n}\,u\,\Exp f(G)\big\},$$
and in this form can be estimated with help of Lemma~\ref{l: aux a;oia;slkfmsflm}.
This gives
$$\frac{c'}{i_0^4}\gamma_n(S)\ls
\Prob\big\{|f(G)-f(G_t)|\gr \mbox{$\frac{c}{2}$}i_0\delta^{-1}\sqrt{t/\log n}\,u\,\Exp f(G)\big\}
+2\exp(-\tilde ci_0^2u^2).$$
It remains to note that the condition on the concentration of $f(G)$ implies
$$\Prob\big\{|f(G)-f(G_t)|\gr \mbox{$\frac{c}{2}$}i_0\delta^{-1}\sqrt{t/\log n}\,u\,\Exp f(G)\big\}
\ls 2 e^{-c'' i_0u\sqrt{t\log n}},$$
and solve the inequality for $\gamma_n(S)$.
\end{proof}

\begin{lemma}\label{l: ooyeksn}
Let $f(\cdot)$ be a norm in $\R^n$ satisfying \eqref{eq: ndelta cond}. Then for any $t\in(0,1/2]$ we have
$$\Prob\big\{\|P_t \nabla f(G)\|_2\ls \|\nabla f\|_{L^2(\gamma_n)}\,n^{-\delta t/8}\big\}\gr 1-n^{-c\delta t},$$
where $c>0$ is a universal constant. 
\end{lemma}
\begin{proof}
Repeating the argument from the proof of Proposition~\ref{prop:key-1}, or employing \eqref{eq:ineq-grad}, we get
\begin{align*}\Exp \| P_t\nabla f(G)\|_2^2 \ls  \Exp
\| \nabla f(G) \|_2^2 R^{-\tanh t}
&\ls \Exp\| \nabla f(G) \|_2^2\, n^{-\delta\tanh t} \\
&\ls \Exp\| \nabla f(G) \|_2^2\, n^{-\delta t/e}.
\end{align*}
The result follows by a standard application of Markov's inequality.
\end{proof}

\begin{lemma}\label{l: aponalfafrha}
Let the norm $f(\cdot)$ satisfy $\delta\sqrt{\log n}\,\Lip(f)\ls \Exp f(G)$ and
$$\Prob\big\{|f(G)-\Exp f(G)|\gr s \Exp f(G)\big\}\ls 2\exp(-\delta s\log n),\quad s\gr 0,$$
for some parameter $\delta\in(0,1]$. Then for any $t\in[\log^{-1}n,1/2]$ we have
$$\Prob\big\{\langle G, P_t \nabla f(G)\rangle < (1-4t)\,\Exp f(G)\big\}\ls 2\exp(-c\delta\sqrt{t\log n})$$
for a universal constant $c>0$.
\end{lemma}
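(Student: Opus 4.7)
The plan is to combine the previous Lemma~\ref{l: apon,mfpauh} with the concentration hypothesis on $f(G)$. Informally, Lemma~\ref{l: apon,mfpauh} shows that $\langle G,P_t\nabla f(G)\rangle$ cannot be much smaller than $e^{-t}f(G)$; since $e^{-t}\gr 1-t$ and $f(G)$ is close to $\Exp f(G)$ with high probability, this quantity should be at least roughly $(1-2t)\Exp f(G)$, with a small error term coming from Lemma~\ref{l: apon,mfpauh}. The slack $4t$ in the target bound is designed to absorb both the $e^{-t}$ approximation loss and the concentration error.

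Concretely, I would first apply the hypothesis with $s:=t$ to obtain the event
$$\mathcal A_1 := \big\{f(G) \gr (1-t)\Exp f(G)\big\},\quad \Prob(\mathcal A_1) \gr 1 - 2\exp(-\delta t\log n).$$
Next, I apply Lemma~\ref{l: apon,mfpauh} with $u := 2\delta\sqrt{t\log n}$, chosen precisely so that $\delta^{-1}\sqrt{t/\log n}\cdot u = 2t$. In the regime $u<1$, we have $\delta\sqrt{t\log n}<1/2$, and one can choose the universal constant in the conclusion small enough that $2\exp(-c\delta\sqrt{t\log n})\gr 1$, making the statement vacuous; so I may assume $u\gr 1$. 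The lemma then gives
$$\mathcal A_2 := \big\{\langle G,P_t\nabla f(G)\rangle \gr e^{-t}f(G) - 2t\,\Exp f(G)\big\},\quad \Prob(\mathcal A_2)\gr 1 - 2\exp(-2c\delta\sqrt{t\log n}).$$

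On $\mathcal A_1\cap\mathcal A_2$, using $e^{-t}\gr 1-t$ and $t\ls 1/2$,
$$\langle G,P_t\nabla f(G)\rangle \gr (1-t)^2\Exp f(G) - 2t\Exp f(G) \gr (1 - 4t)\Exp f(G).$$
The proof is then completed by the union bound: since $t\log n\gr 1$, we have $\delta t\log n\gr \delta\sqrt{t\log n}$, so both failure probabilities are dominated by $\exp(-c'\delta\sqrt{t\log n})$ for a suitable universal $c'>0$. There is no serious obstacle here — the argument is essentially bookkeeping of two deviation estimates with a carefully balanced choice of the parameter $u$ that tunes Lemma~\ref{l: apon,mfpauh}'s error term to match the $O(t)$ loss already present in $e^{-t}$.
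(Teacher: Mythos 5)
Your proof is correct and follows essentially the same route as the paper's: both decompose the bad event using the concentration hypothesis with $s=t$ together with Lemma~\ref{l: apon,mfpauh} applied with $u=2\delta\sqrt{t\log n}$ (so that the error term equals $2t\,\Exp f(G)$), and then absorb the losses into the $4t$ slack. Your explicit handling of the $u\gr 1$ requirement and the comparison $\delta t\log n\gr\delta\sqrt{t\log n}$ are exactly the bookkeeping the paper leaves implicit.
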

\begin{proof}
We have
\begin{align*}
\Prob&\big\{\langle G, P_t \nabla f(G)\rangle < (1-4t)\,\Exp f(G)\big\}\\
&\ls \Prob\big\{\langle G, P_t \nabla f(G)\rangle < (1-t)f(G)-2t\,\Exp f(G)\big\}\\
&\hspace{1cm}+\Prob\big\{(1-t)f(G)-2t\,\Exp f(G)<(1-4t)\,\Exp f(G)\big\}.
\end{align*}
Applying Lemma~\ref{l: apon,mfpauh}, we get
$$\Prob\big\{\langle G, P_t \nabla f(G)\rangle < (1-t)f(G)-2t\,\Exp f(G)\big\}\ls 2\exp\big(-c \delta \sqrt{t\log n}\big)$$
for some universal constant $c>0$.
On the other hand, the assumptions of the lemma imply
$$\Prob\big\{(1-t)f(G)-2t\,\Exp f(G)<(1-4t)\,\Exp f(G)\big\}\ls 2\exp(-c'\delta t\log n)$$
for a universal constant $c'>0$.
The result follows.
\end{proof}

\begin{proof}[Proof of Proposition~\ref{p: piul,ams}]
Without loss of generality, $\tau\ls 1/4$.
We have
\begin{align*}
\Exp \semi(G)&=\int\limits_0^\infty \Prob\big\{\semi(G)\gr s\big\}\,ds\\
&\gr 4\int\limits_{\tau}^{1/4} \Prob\big\{\semi(G)\gr (1-4s)\Exp f(G)\big\}\,\Exp f(G)\,ds\\
&=(1-4\tau)\Exp f(G)-4\int\limits_{\tau}^{1/4} \Prob\big\{\semi(G)< (1-4s)\Exp f(G)\big\}\,\Exp f(G)\,ds.
\end{align*}
Take any $s\in [\tau,1/4]$. Combining Lemmas~\ref{l: ooyeksn} and~\ref{l: aponalfafrha}
and the conditions on events $\Event_s$, we get
that the event $\{G\in F_s\setminus \Event_s\}$ (where the sets $F_\cdot$ are defined in \eqref{eq: F definition})
has probability at least
$1-2\exp(-c'\delta\sqrt{s\log n})$, for some constant $c'>0$.
By the definition of the seminorm $\semi(\cdot)$, we have
$\semi(G)\gr (1-4s)\Exp f(G)$ whenever $G\in F_s\setminus \Event_s$.
Hence,
$$\int\limits_{\tau}^{1/4} \Prob\big\{\semi(G)< (1-4s)\Exp f(G)\big\}\,ds
\ls 2\int\limits_{\tau}^{1/4}\exp(-c'\delta\sqrt{s\log n})\,ds\ls C\delta^{-2}\tau,$$
for some constant $C>0$. The result follows.
\end{proof}

\subsection{Seminorm deformation}
Let $T$ be non-empty closed bounded origin-symmetric subset of $\R^n\setminus\{0\}$,
and let $\textfrak{w}(\cdot)$ be a seminorm in $\R^n$ given by
$$\textfrak{w}(x):=\max\limits_{y\in T}\langle x,y\rangle,\quad x\in\R^n.$$
Further, assume that for some $i\ls n$ and $\alpha\in (0,1)$ we have
$$\Exp|\partial_i \textfrak{w}(G)|\gr \alpha\sqrt{\Exp\|\nabla \textfrak{w}(G)\|_2^2}.$$
We are interested in properties of a new seminorm $\widetilde{\textfrak{w}}(\cdot)$ obtained from
$\textfrak{w}(\cdot)$ by ``removing'' functionals with large scalar products with the $i$-th coordinate vector:
$$\widetilde{\textfrak{w}}(x):=\max\limits_{y\in T_i}\langle x,y\rangle,\quad x\in\R^n,$$
where
$$T_i:=\big\{y\in T:\;|y_i|\ls \mbox{$\frac{\alpha}{4}$}\sqrt{\Exp\|\nabla \textfrak{w}(G)\|_2^2}\big\}.$$
The main statement of the section is the following:
\begin{proposition}\label{p: piuh;alma}
Let $\textfrak{w}(\cdot)$, $\widetilde{\textfrak{w}}(\cdot)$, $\alpha\in (0,1)$ and $i\ls n$ be as above. Then, we have
$$\Exp \widetilde{\textfrak{w}}(G)\ls \Exp \textfrak{w}(G)-c\alpha^4\sqrt{\Exp\|\nabla \textfrak{w}(G)\|_2^2},$$
where $c>0$ is a universal constant.
\end{proposition}
\begin{remark}
The proposition can be interpreted as follows: if the partial derivative of the $i$--th coordinate of a seminorm
is large then removing the ``spikes'' in the definition of the set of norming functionals produces a seminorm
with expectation significantly less than the expectation of the original seminorm. In a sense, this implies that
the total number of such coordinates cannot be large. This will be very important in the next subsection
when we choose the events $(\Event_t)$ for the parametric family of seminorms $\semi(\cdot)$
and prove the main technical statement of the section.
\end{remark}

\begin{lemma}\label{l: suyrosamd;dsf}
Let $x\in\R^n$ be a vector such that the gradient $\partial_i \textfrak{w}(x)$ is well defined and let us assume that
$\partial_i \textfrak{w}(x)\gr \frac{\alpha}{2} \sqrt{\Exp\|\nabla \textfrak{w}(G)\|_2^2}$. Then, for any
$r>0$ one has
\[
\widetilde{\textfrak{w}}(x+re_i) \ls \textfrak{w}(x+re_i) - \frac{r\alpha}{4} \sqrt{ \mathbb E \|\nabla \textfrak{w}(G)\|_2^2}.
\]
\end{lemma}

\noindent {\it Proof.} Let $r>0$ and let $y\in T_i$ such that $\widetilde{\textfrak{w}}(x+re_i) = \langle x+re_i ,y\rangle$. It follows that
\begin{align} \label{eq:com-norm}
\widetilde{\textfrak{w}}(x+re_i) = \langle x,y\rangle + ry_i\ls \textfrak{w}(x) + \frac{r\alpha}{4} \sqrt{ \mathbb E\|\nabla \textfrak{w}(G)\|_2^2 },
\end{align} where we have used the definition of $T_i$ and the fact that $T_i \subseteq T$. 
On the other hand, the mapping $x_i \mapsto \textfrak{w}(x_1, \ldots, x_i , \ldots x_n)$ is convex
(when the other coordinates remain fixed), thus
\[
\textfrak{w}(x) \ls  \textfrak{w}(x+re_i) - r\partial_i \textfrak{w}(x) \ls \textfrak{w}(x+re_i) -\frac{r\alpha}{2} \sqrt{ \mathbb E\|\nabla \textfrak{w}(G)\|_2^2 }.
\] Inserting the latter into \eqref{eq:com-norm} we get the result. \prend

\begin{proof}[Proof of Proposition~\ref{p: piuh;alma}] The trivial bound 
$\mathbb E|\partial_i \textfrak{w}(G)|^2 \ls \mathbb E\|\nabla \textfrak{w}(G)\|_2^2$ and the Paley-Zygmund inequality \cite[p.47]{BLM} imply
\[
\mathbb P \left\{ |\partial_i \textfrak{w}(G)| \gr \frac{\alpha}{2} \sqrt{ \mathbb E\|\nabla \textfrak{w}(G)\|_2^2} \right\} \gr
\mathbb P\left\{ |\partial_i \textfrak{w}(G)| \gr \frac{1}{2} \mathbb E |\partial_i \textfrak{w}(G) | \right\} \gr 
\frac{1}{4} \frac{(\mathbb E|\partial_i \textfrak{w}(G)|)^2}{\mathbb E |\partial_i \textfrak{w}(G)|^2} \gr \frac{\alpha^2}{4}.
\] 
It follows that\footnote{Note that if 
$Q= \left \{ \partial_i\textfrak{w} \gr \tfrac{\alpha}{2}\sqrt{\mathbb E \|\nabla \textfrak{w}(G)\|_2^2 } \right\}$, 
then the evenness of $\textfrak{w}$ implies $-Q = \left \{- \partial_i\textfrak{w} \gr \tfrac{\alpha}{2}\sqrt{\mathbb E \|\nabla \textfrak{w}(G)\|_2^2 } \right\}$, and hence
$ \left \{|\partial_i\textfrak{w}| \gr \tfrac{\alpha}{2}\sqrt{\mathbb E \|\nabla \textfrak{w}(G)\|_2^2 } \right\} \subseteq Q\cup (-Q)$.}  the set
\[
Q:=\Big\{x: \partial_i \textfrak{w}(x)\gr \frac{\alpha}{2}\sqrt{\Exp\|\nabla \textfrak{w}(G)\|_2^2}\Big\},
\] satisfies $\gamma_n(Q)\gr \alpha^2/8$. We will need the following:

\smallskip

\noindent {\it Fact.} Let $Q \subset \mathbb R^n$. For any $0<\lambda <1$ and any $z\in \mathbb R^n$ one has
\[
\gamma_n(Q+z) \gr (\gamma_n(Q))^{1/\lambda} \exp\left( -\frac{\|z\|_2^2}{2(1-\lambda)} \right) .
\]

\smallskip

\noindent {\it Proof of Fact.} Let $q=\gamma_n(Q)$. First note that 
\[
\gamma_n(Q+z) = \int_{\mathbb R^n} {\bf 1}_Q e^{-\|z\|_2^2/2 - \langle x, z\rangle} \, d\gamma_n(x).
\] Hence, for any $0<\lambda<1$ we may apply H\"older's inequality to write
\begin{align*}
q = \int_{\mathbb R^n} \left( {\bf 1}_Q e^{- \frac{\|z\|_2^2}{2} -\langle x,z\rangle }\right)^\lambda 
\cdot e^{\lambda (\frac{\|x\|_2^2}{2} + \langle x, z\rangle )} \, d\gamma_n(x) 
& \ls (\gamma_n(Q+z))^\lambda \left(  \int_{\mathbb R^n}  e^{ \frac{\lambda}{1-\lambda} (\frac{\|x\|_2^2}{2} +\langle x, z\rangle) } \, d\gamma_n(x) \right)^{1-\lambda} \\
&= (\gamma_n(Q+z))^\lambda e^{\frac{\lambda}{2(1-\lambda)} \|z\|_2^2},
\end{align*} where we have used the rotation invariance of the Gaussian measure and the moment generating function of the standard normal. Reorganizing the above
inequality we arrive at the desired result. \prend

%By a result of Kuelbs and Li \cite[Theorem 1]{KuLi} we have for all $r>0$ that
%\[
%\gamma_n(Q +re_i) \gr \Phi( \Phi^{-1}(\gamma_n(Q)) -r).
%\]
%The choice $r_0 := \frac{1}{8}\left( -\log(\gamma_n(Q)/2) \right)^{-1/2}$ yields 
%\[\gamma_n(Q+ r_0e_i) \gr \frac{\gamma_n(Q)}{4}.
%\] Indeed; setting $p=\gamma_n(Q)/2 \in (0,1/2)$ we may write
%\[
%\Phi^{-1}(p) - \Phi^{-1}(p/2) = \int_{1/2}^1 \frac{p}{I(tp)} \, dt \gr \frac{p}{2I(p)} \gr \frac{1}{8\sqrt{\log (1/p)}} =r_0,
%\]
%where $I= \Phi' \circ \Phi^{-1}$ stands for the Gaussian isoperimetric function and we have also used the 
%easily verified 
%facts that $p\mapsto I(p)$ is nondecreasing
%in $(0,1/2)$ and satisfies $I(p)\ls 4p\sqrt{\log (1/p) }$ for all $p\in (0,1/2)$, see e.g. \cite[Lemma 10.3]{BLM}. 

By Lemma~\ref{l: suyrosamd;dsf}, for any $y\in Q+ e_i$
we have
$$\widetilde{\textfrak{w}}(y)\ls \textfrak{w}(y)- \frac{\alpha}{4} \sqrt{\Exp\|\nabla \textfrak{w}(G)\|_2^2}.$$
This, together with the obvious relation $\widetilde{\textfrak{w}}(y)\ls \textfrak{w}(y)$, $y\in\R^n$ and the previous fact (for $\lambda=2/3$), gives
\begin{align*} 
\Exp \textfrak{w}(G)-\Exp\widetilde{\textfrak{w}}(G)
\gr \mathbb E[(\textfrak{w}(G)-\widetilde{\textfrak{w}}(G)) \mathbf 1_{Q+e_i}] 
&\gr \frac{ \alpha \gamma_n(Q)^{3/2} }{4e^{3/2}  }  \sqrt{\Exp\|\nabla \textfrak{w}(G)\|_2^2}.
\end{align*}
The bound $\gamma_n(Q)\gr \alpha^2/8$ yields the estimate
\[
\mathbb E\textfrak{w}(G) - \mathbb E\widetilde{\textfrak{w}}(G)
\gr c\alpha^4 \sqrt{\mathbb E\|\nabla \textfrak{w}(G)\|_2^2},
\] as required.
\end{proof}

\subsection{Balancing the seminorm: improving bounds on the variance}
Our bound on the Lipschitz constant of a seminorm $\semi(\cdot)$ from Proposition~\ref{p: piul,ams}
holds under very general (or no) assumptions on parameters $\delta,H$, the norm $f(\cdot)$ and events $(\Event_t)_{t\gr \tau}$,
but is not sufficiently strong to imply the main results of the section.
However, the bound on the variance of the seminorm can be improved provided that $H$ is ``sufficiently small''
and that the events $(\Event_t)_{t\gr \tau}$ are carefully chosen to guarantee that
the seminorm is ``well balanced''. It will be convenient to revise our notation a little.
Assume parameters $\tau,\delta,H$, and the norm $f$ are fixed.
Let $\mathcal F$ be the collection of all seminorms in $\R^n$ defined as in the Subsection~\ref{subs: seminorms}, where we allow all possible choices for the events $(\Event_t)_{t\gr \tau}$.
We will inductively construct a finite sequence of seminorms $(\semi_k)_{k=0}^m\subset\mathcal F$, where each seminorm corresponds to specially chosen events $(\Event_t^k)_{t\gr \tau}$,
starting with $\semi_0(\cdot)$ constructed for empty events $\Event_t^0:=\emptyset$, $t\gr \tau$.

Let $\theta>0$ be a parameter (later, we will connect it with paramters $H$ and $\delta$),
and assume that for some $k\gr 1$, the seminorm $\semi_{k-1}(\cdot)$ has been constructed.
If for every $i\ls n$ we have $\Exp|\partial_i \semi_{k-1}(G)|< \theta\Exp\semi_{0}(G)$,
then we set $m:=k-1$ and stop the construction. Otherwise, choose an index $i\ls n$
with $\Exp|\partial_i \semi_{k-1}(G)|\gr \theta\Exp\semi_{0}(G)$,
and for any $t\gr \tau$ let
$$\Event_t^k:=\Event_t^{k-1}\cup\big\{y\in\R^n:\;
|\langle e_i,P_t \nabla f(y)\rangle|\gr \mbox{$\frac{\theta}{4}$}\Exp\semi_{0}(G)\big\}.$$
Then we let $\semi_k(\cdot)$ to be the seminorm from $\mathcal F$ defined with parameters
$\tau,\delta,H$, $f$ and $(\Event_t^k)_{t\gr \tau}$, and proceed to the next step.

The following is a consequence of results of the previous subsection:
\begin{lemma}\label{l: oiugeojhalkf}
The sequence of seminorms constructed above is finite, with $m\ls C\theta^{-4}$,
and for any $k\ls m$
we have
$$\Exp\semi_{k}(G)\ls \Exp\semi_{k-1}(G)-c\theta^4\,\Exp \semi_{0}(G),$$
for universal constants $c,C>0$.
\end{lemma}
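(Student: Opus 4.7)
\noindent\textit{Proof plan.}
The plan is to apply Proposition~\ref{p: piuh;alma} with $w=\semi_{k-1}$ and to verify that the algorithmic shrinkage $\Event_t^{k-1}\rightsquigarrow\Event_t^k$ producing $\semi_k$ corresponds (up to pointwise domination) to the spike-removal procedure of that proposition. Set $L:=\Exp\semi_0(G)$ and $M:=\sqrt{\Exp\|\nabla\semi_{k-1}(G)\|_2^2}$.

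First, I would unpack the norming sets. Writing $S_\ell:=\{P_t\nabla f(y):\,y\in F_t\setminus\Event_t^\ell,\ t\gr\tau\}$, the seminorm $\semi_\ell$ is the support functional of the origin-symmetric set
$$T_\ell:=S_\ell\cup\bigcup_{\epsilon\in\{-1,1\}^n}\tfrac{1}{H}D_\epsilon S_\ell,\qquad D_\epsilon:=\mathrm{diag}(\epsilon).$$
When the algorithm enlarges $\Event_t^{k-1}$ to $\Event_t^k$ at step $k$, it removes from $S_{k-1}$ exactly those $v$ with $|v_i|\gr\tfrac{\theta L}{4}$ for the active coordinate $i$; since $H\gr 1$, the corresponding $\tfrac{1}{H}D_\epsilon v$'s are eliminated as well. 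The net effect is the inclusion $T_k\subseteq\{v\in T_{k-1}:\,|v_i|\ls\tfrac{\theta L}{4}\}$.

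Next, I would apply Proposition~\ref{p: piuh;alma} to $w:=\semi_{k-1}$ with $\alpha:=\theta L/M$. The trivial bound $|\partial_i w|\ls\|\nabla w\|_2$ yields $\Exp|\partial_i\semi_{k-1}(G)|\ls M$, hence $\alpha\in(0,1]$; meanwhile the algorithmic selection of $i$ gives $\Exp|\partial_i\semi_{k-1}(G)|\gr\theta L=\alpha M$, verifying the hypothesis of the proposition. The auxiliary seminorm $\widetilde w$ thus produced has norming set $\{v\in T_{k-1}:\,|v_i|\ls\tfrac{\alpha M}{4}=\tfrac{\theta L}{4}\}$, which contains $T_k$ by the inclusion above, so $\semi_k\ls\widetilde w$ pointwise and
$$\Exp\semi_k(G)\ls\Exp\widetilde w(G)\ls\Exp\semi_{k-1}(G)-c\alpha^4 M=\Exp\semi_{k-1}(G)-c\theta^4\frac{L^4}{M^3}.$$

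Finally, I would show $L\gr M$, converting the drop $c\theta^4 L^4/M^3$ into the desired $c\theta^4 L$. By Lemma~\ref{l: apiurls;,m}, $M\ls\Lip(\semi_{k-1})\ls\Lip(f)\,n^{-\delta\tau/8}$; Proposition~\ref{p: piul,ams} applied to $\semi_0$ (where $\Event_t=\emptyset$) gives $L\gr(1-C\delta^{-2}\tau)\Exp f(G)$; and the standing concentration hypothesis $\delta\sqrt{\log n}\,\Lip(f)\ls\Exp f(G)$ forces $L/M\gr c\delta\sqrt{\log n}\,n^{\delta\tau/8}\gr 1$ in the relevant regime. Telescoping then yields $m\cdot c\theta^4 L\ls\Exp\semi_0(G)-\Exp\semi_m(G)\ls L$, whence $m\ls C\theta^{-4}$. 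The main obstacle is the bookkeeping coordination of the algorithm's spike removal on the raw vectors $P_t\nabla f(y)$ with the abstract spike removal of Proposition~\ref{p: piuh;alma} on the full origin-symmetric hull $T_{k-1}$, while ensuring that the unconditional wrapping $\tfrac{1}{H}\max_\epsilon w(\epsilon x)$ does not introduce $i$-spiky functionals escaping the cutoff; this is precisely where $H\gr 1$ enters.
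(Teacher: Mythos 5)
Your proposal is correct and follows essentially the same route as the paper: identify the selected coordinate $i$, set $\alpha=\theta\,\Exp\semi_0(G)/\sqrt{\Exp\|\nabla\semi_{k-1}(G)\|_2^2}$, check that all norming functionals of $\semi_k$ (including the $\frac1H D_\epsilon$-wrapped ones, using $H\gr1$) obey the cutoff $|v_i|\ls\frac{\alpha}{4}\sqrt{\Exp\|\nabla\semi_{k-1}(G)\|_2^2}$ so that $\semi_k\ls\widetilde w$, apply Proposition~\ref{p: piuh;alma}, and telescope. Your explicit verification that $\sqrt{\Exp\|\nabla\semi_{k-1}(G)\|_2^2}\ls\Exp\semi_0(G)$ (via $\Lip(\semi_{k-1})\ls\Lip(f)$ and the standing hypotheses on $f$) is a step the paper leaves implicit but which is genuinely needed to convert the drop $c\alpha^4\sqrt{\Exp\|\nabla\semi_{k-1}(G)\|_2^2}$ into $c\theta^4\,\Exp\semi_0(G)$, so this is a welcome addition rather than a deviation.
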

\begin{proof}
Take any two adjacent seminorms $\semi_{k-1}(\cdot)$ and $\semi_{k}(\cdot)$
from the sequence. According to the construction, there is an index $i\ls n$ such that
$\sqrt{\Exp\|\nabla \semi_{k-1}(G)\|_2^2}\gr\Exp|\partial_i \semi_{k-1}(G)|\gr \theta\Exp\semi_{0}(G)$.
Define $\alpha>0$ by $\alpha:=\theta\Exp\semi_{0}(G)/\sqrt{\Exp\|\nabla \semi_{k-1}(G)\|_2^2}$.
Note that the definition of the seminorm $\semi_{k}(\cdot)$
implies that every norming functional of $\semi_{k}(\cdot)$ is a limit of vectors
$P_t \nabla f(y)$ for some $y\in F_t\setminus \Event_t^k$
or $\frac{1}{H}(\epsilon_j P_t \partial_j f(y))_{j=1}^n$ for some $y\in F_t\setminus \Event_t^k$
and $\epsilon_j\in\{-1,1\}$, $j\ls n$.
Together with the definition of the events $\Event_t^k$, $t\gr \tau$, this implies that every norming functional $v=(v_1,\dots,v_n)$
of $\semi_{k}(\cdot)$ satisfies $|v_i|\ls \mbox{$\frac{\theta}{4}$}\Exp\semi_{0}(G)=\frac{\alpha}{4}
\sqrt{\Exp\|\nabla \semi_{k-1}(G)\|_2^2}$.

Then, by the definition of $\semi_{k}(\cdot)$ and by Proposition~\ref{p: piuh;alma},
we get
\begin{align*}
\Exp \semi_{k}(G)\ls \Exp \semi_{k-1}(G)-c'\alpha^4\sqrt{\Exp\|\nabla \semi_{k-1}(G)\|_2^2}
&=\Exp \semi_{k-1}(G)-c'\theta^4\,\Exp\semi_{0}(G)\,\bigg(\frac{\Exp\semi_{0}(G)}{\sqrt{\Exp\|\nabla \semi_{k-1}(G)\|_2^2}}\bigg)^3\\
&\ls \Exp \semi_{k-1}(G)-\widetilde c\theta^4\,\Exp \semi_{0}(G)
\end{align*}
for a universal constant $\widetilde c>0$, where in the last inequality we used that, for the standard Gaussian vector $Z$ independent from $G$ and some universal constant $\hat C>0$,
$$\hat C\,\Exp\semi_{0}(G)\gr \sqrt{\Exp\semi_{0}^2(G)}\gr \sqrt{\Exp_G\Exp_Z|\langle Z,\nabla \semi_{k-1}(G)\rangle|^2}
=\sqrt{\Exp_G\|\nabla \semi_{k-1}(G)\|_2^2}.$$

A recursive application of the bound gives
$$\Exp \semi_{k}(G)\ls (1-\widetilde c\theta^4 k)\Exp \semi_{0}(G),\quad k\gr 0,$$
whence
$m\ls \frac{1}{\widetilde c\theta^4}.$
\end{proof}
Next, we will estimate the Gaussian measure of events $\Event^k_t$ constructed above.
\begin{lemma}\label{l: ouakjnlfkgfoakf}
Let the norm $f(\cdot)$ satisfy condition \eqref{eq: ndelta cond}, and assume that $\delta\sqrt{\log n}\,\Lip(f)\ls \Exp f(G)$ and
$$\Prob\big\{|f(G)-\Exp f(G)|\gr s\, \Exp f(G)\big\}\ls 2\exp(-\delta s\log n),\quad s\gr 0.$$
Further, assume that $\log^{-1} n\ls \tau\ls \delta^2/(2C_{\ref{p: piul,ams}})$,
where the constant $C_{\ref{p: piul,ams}}$ is taken from Proposition~\ref{p: piul,ams}.
Then
for any $k\ls m$ and $t\gr \tau$ we have
$$\gamma_n(\Event^k_t)\ls \frac{Ck}{\theta}n^{-\delta/2},$$
where $C>0$ is a universal constant.
\end{lemma}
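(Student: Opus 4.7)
The plan is to combine a union bound over the $k$ indices selected in the sequential construction with Markov's inequality applied in $L_1$. By the recursive definition of the events, if $i_1,\ldots,i_k\ls n$ denote the indices chosen at steps $1,\ldots,k$, then
$$\Event_t^k=\bigcup_{j=1}^k\big\{y\in\R^n:\;|P_t(\partial_{i_j}f)(y)|\gr\tfrac{\theta}{4}\Exp\semi_0(G)\big\},$$
since $\langle e_i,P_t\nabla f(y)\rangle=P_t(\partial_i f)(y)$ by Mehler's formula applied componentwise. Thus it suffices to bound each summand by $C n^{-\delta/2}/\theta$.

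For any $i\ls n$ and $\lambda>0$, Markov's inequality together with the $L_1$--contractivity of $P_t$ (which follows from Mehler's formula and Jensen's inequality: $|P_t h|\ls P_t|h|$, combined with invariance of $\gamma_n$) yields
$$\gamma_n\big\{|P_t(\partial_i f)|\gr\lambda\big\}\ls\lambda^{-1}\|P_t(\partial_i f)\|_{L_1}\ls\lambda^{-1}\|\partial_i f\|_{L_1}.$$
The balancing hypothesis \eqref{eq: ndelta cond} implies $\|\partial_i f\|_{L_1}^2\ls\sum_j\|\partial_j f\|_{L_1}^2\ls n^{-\delta}\Exp\|\nabla f(G)\|_2^2\ls n^{-\delta}\Lip(f)^2$, and combining with $\delta\sqrt{\log n}\,\Lip(f)\ls\Exp f(G)$ gives
$$\|\partial_i f\|_{L_1}\ls\frac{n^{-\delta/2}\Exp f(G)}{\delta\sqrt{\log n}}.$$

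To conclude, we need a lower bound on $\Exp\semi_0(G)$ in terms of $\Exp f(G)$, and this is supplied by Proposition~\ref{p: piul,ams} applied with $\Event_t^0\equiv\emptyset$ (which trivially satisfies the Gaussian mass condition): under the assumption $\tau\ls \delta^2/(2C_{\ref{p: piul,ams}})$ one has
$$\Exp\semi_0(G)\gr(1-C_{\ref{p: piul,ams}}\delta^{-2}\tau)\Exp f(G)\gr\tfrac{1}{2}\Exp f(G).$$
Plugging $\lambda=(\theta/4)\Exp\semi_0(G)\gr(\theta/8)\Exp f(G)$ into the Markov bound gives
$$\gamma_n\big\{|P_t(\partial_i f)|\gr\tfrac{\theta}{4}\Exp\semi_0(G)\big\}\ls\frac{8\|\partial_i f\|_{L_1}}{\theta\Exp f(G)}\ls\frac{C\,n^{-\delta/2}}{\theta\,\delta\sqrt{\log n}}\ls\frac{C'\,n^{-\delta/2}}{\theta},$$
and a final union bound over the $k$ terms yields $\gamma_n(\Event_t^k)\ls Ck\theta^{-1}n^{-\delta/2}$, as required.

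No serious obstacle is anticipated: the key point is the simple observation that $\|\partial_i f\|_{L_1}$ is already small (by the hypothesis \eqref{eq: ndelta cond}), and $P_t$ only shrinks $L_1$ norms further, so the cheap $L_1$--Markov bound is enough; we do not need to invoke hypercontractivity in this step. The dependence on $t$ drops out entirely, which is why the bound is uniform for all $t\gr\tau$.
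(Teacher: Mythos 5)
Your proof is correct and follows essentially the same route as the paper's: a union bound over the $k$ selected indices, Markov's inequality combined with the $L_1$--contractivity of $P_t$, the lower bound $\Exp\semi_0(G)\gr\tfrac12\Exp f(G)$ from Proposition~\ref{p: piul,ams}, and the balancing condition \eqref{eq: ndelta cond} together with $\delta\sqrt{\log n}\,\Lip(f)\ls\Exp f(G)$ to control $\|\partial_i f\|_{L_1}$. The only implicit step worth flagging is your final absorption of the factor $1/(\delta\sqrt{\log n})$ into the constant, which is justified because $\log^{-1}n\ls\tau\ls\delta^2/(2C_{\ref{p: piul,ams}})$ forces $\delta\sqrt{\log n}\gr\sqrt{2C_{\ref{p: piul,ams}}}$.
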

\begin{proof}
It is sufficient to show that for any given $i\ls n$, we have
$$q:=\gamma_n\Big\{y\in\R^n:\;|P_t\partial_i f(y)|\gr\frac{\theta}{4}\Exp\semi_0(G)\Big\}\ls \frac{C}{\theta}n^{-\delta/2}.$$
By our conditions on parameters and in view of Proposition~\ref{p: piul,ams},
$$\Exp\semi_0(G)\gr \frac{1}{2}\Exp f(G).$$
Hence, 
$$\Exp|\partial_i f(G)|\gr \Exp|P_t\partial_i f(G)|\gr \frac{\theta q}{8}\Exp f(G)\gr c\theta q \sqrt{\Exp\|\nabla f(G)\|_2^2}.$$
This, together with condition \eqref{eq: ndelta cond}, implies the bound.
\end{proof}

As a combination of Lemma~\ref{l: apiurls;,m}, Proposition~\ref{p: piul,ams}
and the above Lemmas~\ref{l: oiugeojhalkf} and~\ref{l: ouakjnlfkgfoakf}, we obtain
\begin{proposition}\label{p: apoh;d,masf}
There is a universal constant $C_{\ref{p: apoh;d,masf}}>0$ with the following property.
Let $n\gr C_{\ref{p: apoh;d,masf}}$; let the norm $f(\cdot)$ and parameter $\delta$ be as in Lemma~\ref{l: ouakjnlfkgfoakf}.
Assume additionally that $\log^{-1} n\ls \tau\ls \delta^2/(2C_{\ref{p: apoh;d,masf}})$ and that $\theta^5\gr n^{-\delta/4}$.
Then the seminorm $\semi_m(\cdot)$ constructed above satisfies
\begin{itemize}
\item $\semi_m(x)\ls f(x)$ for all $x\in\R^n$;
\item $\Exp\semi_m(G)\gr (1-C_{\ref{p: apoh;d,masf}}\delta^{-2}\tau)\Exp f(G)$;
\item $\Lip(\semi_m)\ls \Lip(f)\,n^{-\delta \tau/8}$;
\item $\Exp|\partial_i(\semi_m(G))|\ls 2\theta\Exp\semi_{m}(G)$ for all $i\ls n$.
\end{itemize}
\end{proposition}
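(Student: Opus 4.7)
My plan is that this proposition is essentially a packaging result: all four bullets follow directly from the four lemmas cited in the statement, and the only real work is the bookkeeping needed to verify that the hypotheses of Proposition~\ref{p: piul,ams} are met at the end of the deformation algorithm. Since $\semi_m(\cdot)$ lies in the family $\mathcal F$ with parameters $H,\tau,\delta,f$ and events $(\Event_t^m)_{t\gr\tau}$, I will first extract the ``cheap'' conclusions directly from Lemma~\ref{l: apiurls;,m} and then deal with the expectation estimate, which is the only one that uses the refined choice of events.

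For the first and third bullets I invoke Lemma~\ref{l: apiurls;,m} verbatim: it gives $\semi_m(x)\ls f(x)$ for all $x$ and $\Lip(\semi_m)\ls \Lip(f)\,n^{-\delta\tau/8}$, irrespective of the choice of events. No further work is required.

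The second bullet is the main technical point, and I will deduce it from Proposition~\ref{p: piul,ams} applied to the final seminorm $\semi_m(\cdot)$. The hypotheses on $f$ and on $\delta\sqrt{\log n}\,\Lip(f)\ls \Exp f(G)$ transfer directly. What must be checked is the probabilistic bound $\gamma_n(\Event_t^m)\ls 2n^{-\delta t/4}$ for all $t\in[\tau,1/2]$. Lemma~\ref{l: oiugeojhalkf} gives $m\ls C\theta^{-4}$, and Lemma~\ref{l: ouakjnlfkgfoakf} then produces the per-step estimate
\begin{equation*}
\gamma_n(\Event_t^m)\ls \frac{Cm}{\theta}n^{-\delta/2}\ls \frac{C'}{\theta^{5}}\,n^{-\delta/2}.
\end{equation*}
Using the standing hypothesis $\theta^{5}\gr n^{-\delta/4}$, this upper bound is at most $C' n^{-\delta/4}$; since $t\ls 1/2$ we have $n^{-\delta/4}\ls n^{-\delta t/2}\ls n^{-\delta t/4}$ (the last inequality when $n\gr C_{\ref{p: apoh;d,masf}}$ is large enough to absorb the constant $C'$). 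Hence the event hypothesis of Proposition~\ref{p: piul,ams} holds, and that proposition yields $\Exp\semi_m(G)\gr(1-C_{\ref{p: piul,ams}}\delta^{-2}\tau)\Exp f(G)$, which (after rebranding the constant) is the desired second bullet. The main obstacle is precisely this constant juggling: one must be sure that the total number of steps $m$, which only grows like $\theta^{-4}$, when combined with the per-index tail estimate $n^{-\delta/2}$ still fits inside the budget $n^{-\delta t/4}$ for all $t\in[\tau,1/2]$, and this is exactly what the peculiar constraint $\theta^{5}\gr n^{-\delta/4}$ is designed to secure.

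For the fourth bullet I use the stopping criterion of the algorithm: by the definition of $m$, for every $i\ls n$ we have $\Exp|\partial_i\semi_m(G)|<\theta\,\Exp\semi_0(G)$. Since $\semi_0(\cdot)\ls f(\cdot)$ pointwise and, by the second bullet together with $\tau\ls \delta^2/(2C_{\ref{p: apoh;d,masf}})$, $\Exp\semi_m(G)\gr \tfrac{1}{2}\Exp f(G)\gr \tfrac12\Exp\semi_0(G)$, we conclude $\Exp|\partial_i\semi_m(G)|<\theta\,\Exp\semi_0(G)\ls 2\theta\,\Exp\semi_m(G)$, as claimed. This last step requires no new input beyond the constants already fixed in the earlier bullets.
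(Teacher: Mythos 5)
Your proposal is correct and follows exactly the route the paper intends: the paper states this proposition as "a combination of Lemma~\ref{l: apiurls;,m}, Proposition~\ref{p: piul,ams} and Lemmas~\ref{l: oiugeojhalkf} and~\ref{l: ouakjnlfkgfoakf}" without further detail, and your write-up supplies precisely that combination, including the constant bookkeeping ($m\ls C\theta^{-4}$ times the per-index tail $n^{-\delta/2}/\theta$ fitting under $2n^{-\delta t/4}$ via $\theta^5\gr n^{-\delta/4}$) and the use of the stopping rule plus $\Exp\semi_m(G)\gr\tfrac12\Exp f(G)\gr\tfrac12\Exp\semi_0(G)$ for the last bullet.
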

\begin{proof}
The assertions $\semi_m(x)\ls f(x)$ and $\Lip(\semi_m)\ls \Lip(f)\,n^{-\delta \tau/8}$ are a direct application of Lemma~\ref{l: apiurls;,m}.
Further, the restriction on the parameter $\theta$ together with Lemmas~\ref{l: oiugeojhalkf} and~\ref{l: ouakjnlfkgfoakf}, imply that
$\gamma_n(\Event^m_t)\ls \frac{\hat C}{\theta^5}n^{-\delta/2}\ls \hat C n^{-\delta/4}$.
As long as $C_{\ref{p: apoh;d,masf}}>0$ is chosen sufficiently large, the assumption $\delta^2\gr (\log n)^{-1}$ and $n\gr C_{\ref{p: apoh;d,masf}}$ imply that
$\hat C n^{-\delta/4}\ls 2 n^{-\delta/8}\ls 2 n^{-\delta t/4}$.
Therefore, as long as $C_{\ref{p: apoh;d,masf}}\gr C_{\ref{p: piul,ams}}$, Proposition~\ref{p: piul,ams} can be applied to yield
$\Exp\semi_m(G)\gr (1-C_{\ref{p: apoh;d,masf}}\delta^{-2}\tau)\Exp\, f(G)\gr \frac{1}{2}\Exp\, f(G)$.
Finally, by the construction of the sequence $(\semi_k)_{k=0}^m$, we have
$\Exp|\partial_i \semi_{k-1}(G)|< \theta\,\Exp\semi_{0}(G)\ls \theta\, \Exp\, f(G)$, where the last quantity is in turn bounded above by $2\theta\,\Exp\semi_{m}(G)$. 
\end{proof}

Finally, we can prove the main technical result of the section. For convenience,
we will explicitly restate all conditions on the parameters involved.
\begin{theorem}\label{th: smalldev main}
There are universal constants $C_{\ref{th: smalldev main}},c_{\ref{th: smalldev main}}>0$ with the following property.
Let $\delta\in(0,1]$, let $n\gr C_{\ref{th: smalldev main}}$, and let $f(\cdot)$ be a norm in $\R^n$ such that $n^{\delta/64}\gr H:=\unc(f,\{e_i\}_{i=1}^n)$.
Further, take $\tau$ satisfying $\log^{-1} n\ls \tau\ls \delta^2/(2C_{\ref{th: smalldev main}})$.
Assume that \eqref{eq: ndelta cond} is satisfied, that $\delta\sqrt{\log n}\,\Lip(f)\ls \Exp f(G)$ and
$$\Prob\big\{|f(G)-\Exp f(G)|\gr s\, \Exp f(G)\big\}\ls 2\exp(-\delta s\log n),\quad s\gr 0.$$
Then 
$$\Prob\big\{f(G)\ls (1-C_{\ref{th: smalldev main}}\delta^{-2}\tau)\Exp f(G)\big\}
\ls 3\exp\big(-n^{c_{\ref{th: smalldev main}}\delta\tau}\big).$$
\end{theorem}
\begin{proof}
We will assume that $C_{\ref{th: smalldev main}}\gr 2C_{\ref{p: apoh;d,masf}}$.
Let $\theta:=n^{-\delta/32}$, and let $\semi_m(\cdot)$ be the seminorm constructed above.
We have $\semi_m(\cdot)\ls f(\cdot)$ and
$$\Exp \semi_m(G)\gr (1-C_{\ref{p: apoh;d,masf}}\delta^{-2}\tau)\Exp f(G),$$
by Proposition~\ref{p: apoh;d,masf}. This implies
$$\Prob\big\{f(G)\ls (1-C_{\ref{th: smalldev main}}\delta^{-2}\tau)\Exp f(G)\big\}
\ls \Prob\big\{\semi_m(G)\ls
(1-C_{\ref{p: apoh;d,masf}}\delta^{-2}\tau)\Exp\semi_m(G)\big\}.$$
Next, we will estimate the variance $\Var \semi_m(G)$.
As a corollary of Talagrand's $L^1-L^2$ bound (see Corollary~\ref{cor:bd-s-R}), we obtain
\begin{equation}\label{eq: 29857525093-0857-2}
\Var\semi_m(G)\ls \frac{\widetilde C\Exp \|\nabla \semi_m(G)\|_2^2}{1+
\log \frac{ \Exp\|\nabla \semi_m(G)\|_2^2}{ \sum_{i=1}^n \|\partial_i \semi_m\|_{L^1(\gamma_n)}^2 }},
\end{equation}
for a universal constant $\widetilde C>0$.
Clearly, $\Exp \|\nabla \semi_m(G)\|_2^2\ls \Lip(\semi_m)^2\ls n^{-\delta \tau/4}
\Lip(f)^2$, in view of Proposition~\ref{p: apoh;d,masf}.
Further, by the same proposition we have for any $i\ls n$:
$$\|\partial_i \semi_m\|_{L^1(\gamma_n)}%\ls \theta\Exp\semi_{0}(G)
\ls 2\theta \Exp \semi_{m}(G).$$
Note that if $Z$ is the standard Gaussian vector independent from $G$ then
\begin{equation}\label{019874019874098}
\begin{split}
\sum\limits_{i=1}^n \|\partial_i \semi_m\|_{L^1(\gamma_n)}&=\sqrt{\pi/2}\,\Exp_G\Exp_Z\,\sum_{i=1}^n |G_i\cdot \partial_i\semi_m(Z)|\\
&=\sqrt{\pi/2}\,\Exp_G\Exp_Z\big\langle G,({\rm sign}(G_i)\,\partial_i\semi_m(Z))_{i=1}^n\big\rangle
\ls \sqrt{\pi/2}\,H\,\Exp \semi_{m}(G),
\end{split}
\end{equation}
where we have used that, by the $H$--unconditionality of the standard vector basis with respect to $\semi_m(\cdot)$,
the vector $({\rm sign}(G_i)\,\partial_i\semi_m(Z))_{i=1}^n$ belongs to $H$--enlargement of the unit dual ball for $\semi_m(\cdot)$.
This implies
$$\sum_{i=1}^n \|\partial_i \semi_m\|_{L^1(\gamma_n)}^2
\ls \max\limits_{j\ls n}\|\partial_j \semi_m\|_{L^1(\gamma_n)}\,
\Big(\sum\limits_{i=1}^n \|\partial_i \semi_m\|_{L^1(\gamma_n)}\Big)
\ls C\theta H(\Exp \semi_{m}(G))^2\ls Cn^{-\delta/64}(\Exp \semi_{m}(G))^2.$$
Now, if $(\Exp \semi_m(G))^2< n^{\delta/128}\Exp\|\nabla \semi_m(G)\|_2^2$
then the last estimate gives
$$\sum_{i=1}^n \|\partial_i \semi_m\|_{L^1(\gamma_n)}^2\ls Cn^{-\delta/128}\Exp\|\nabla \semi_m(G)\|_2^2,$$
so, \eqref{eq: 29857525093-0857-2} implies
$$\Var\semi_m(G)\ls \frac{C'n^{-\delta \tau/4}\Lip(f)^2}{1+\delta\log n}
\ls \frac{C'n^{-\delta \tau/4}\delta^{-2}(\Exp f(G))^2}{\log n+\delta\log^2 n}.$$
On the other hand, if $(\Exp \semi_m(G))^2\gr n^{\delta/128}\Exp\|\nabla \semi_m(G)\|_2^2$
then the Poincar\'e inequality immediately implies a bound
$$\Var\semi_m(G)\ls 2n^{-c\delta}(\Exp \semi_m(G))^2
\ls 2n^{-c\delta}(\Exp f(G))^2
.$$

As the final step of the proof, we observe that, according to Theorem~\ref{thm:sdi},
we have
$$\Prob\big\{\semi_m(G)\ls
(1-C_{\ref{p: apoh;d,masf}}\delta^{-2}\tau)\Exp\semi_m(G)\big\}
\ls 2\exp\big(-c\delta^{-4}\tau^2(\Exp\semi_m(G))^2/\Var\semi_m(G)\big).$$
The result follows from the upper bound on the variance obtained above.
\end{proof}

\subsection{Proof of Theorems~\ref{th: smdev gen} and~\ref{th: smdev unc}}

\begin{proof}[Proof of Theorem~\ref{th: smdev unc}]
Clearly, it is sufficient to show that any $1$--unconditional norm $f(\cdot)$ in the position of minimal $M$, 
or $\ell$--position, or $w^{1,1}$--position
satisfies the conditions of Theorem~\ref{th: smalldev main} with $\delta$ being a universal constant.
We consider the position of minimal $M$ first.
The conditions $\delta\sqrt{\log n}\,\Lip(f)\ls \Exp f(G)$ and
$\Prob\big\{|f(G)-\Exp f(G)|\gr s\, \Exp f(G)\big\}\ls 2\exp(-\delta s\log n),\quad s\gr 0$
(for constant $\delta$) are known; see, in particular, \cite[Proposition~2.5]{PV-dicho}.
If, additionally, \eqref{eq: ndelta cond} holds then a direct application of Theorem~\ref{th: smalldev main}
gives the result.
On the other hand, if \eqref{eq: ndelta cond} does not hold, say, for $\delta=1/10$, then,
using the balancing conditions $\Exp(G_i\partial_i f(G))=\Exp|G_i\partial_i f(G)|=\frac{1}{n}\Exp f(G)$, $i\ls n$
for the position of minimal $M$, and the Poincar\'e inequality, we obtain
$$
\Var f(G)\ls \Exp \|\nabla f(G)\|_2^2\ls n^{1/10}\sum_{i=1}^n (\Exp | \partial_i f(G) |)^2 \ls \frac{\pi}{2}\,n^{1/10}\sum_{i=1}^n (\Exp | G_i \partial_i f(G) |)^2 =\frac{\pi}{2}\,n^{-9/10}(\Exp f(G))^2,
$$
where we used that for any $1$--unconditional seminorm for every coordinate $i$ the quantities $|G_i|$ and $|\partial_i f(G)|$ are positively correlated (see, in particular, \cite[Section~2.3]{Tik-unc}
for a deterministic statement for $1$--uncondi\-tional norms which implies the positive correlation).
Then Theorem~\ref{thm:sdi} implies the required bound.

For the $\ell$--position, the conditions $\delta\sqrt{\log n}\,\Lip(f)\ls \Exp f(G)$ and
$\Prob\big\{|f(G)-\Exp f(G)|\gr s\, \Exp f(G)\big\}\ls 2\exp(-\delta s\log n),\quad s\gr 0$ are known as well (see \cite{Tik-unc}).
Again, if \eqref{eq: ndelta cond} holds for a constant positive $\delta$ then we just apply Theorem~\ref{th: smalldev main}.
Otherwise, using the balancing conditions $\Exp(|G_i|\,f(G)\,|\partial_i f(G)|)=\frac{1}{n}\Exp (f(G)^2)$, $i\ls n$, and the Poincar\'e inequality, we obtain $\Var f(G)\ls n^{-c}(\Exp f(G))^2$
(see, in particular, \cite{Tik-unc} or \cite[Note~2.6]{PV-dicho}), and invoke Theorem~\ref{thm:sdi}.

The argument for $w^{1,1}$--position almost literally repeats the above, up to using the appropriate balancing conditions.
\end{proof}

\begin{proof}[Proof of Theorem~\ref{th: smdev gen}]
Apart from applying Theorem~\ref{th: smalldev main}, the proof essentially
reproduces the ideas from Subsection~\ref{subs: gen case}, so we only sketch the argument.
The starting point is the dichotomy employed earlier in paper \cite{PV-dicho} and, before that, introduced in \cite{Sch06}:
given any norm $f(\cdot)$ in $\R^n$ with the unit ball in John's position, and any $c\in(0,1/4]$, either
the critical dimension $k(f)$ is greater than $n^{c}$, or, as a corollary of Alon--Milman's
theorem \cite{AM}, there is a subspace $X:=(E,f(\cdot))$
of $(\R^n,f(\cdot))$ of dimension $c'n^{1/2}$ which has a $Cn^{c/2}$--unconditional basis (see, for example, proof of Theorem~4.4 in \cite{PV-dicho}).
Here, $c',C>0$ are constants which may only depend on $c$.
We will choose $c>0$ later.

In the former case,
the required result immediately follows from standard concentration estimates for Lipschitz functions in the Gauss space.
In the latter case, applying an appropriate linear transformation to $X$ (whose existence is guaranteed by the Borsuk--Ulam
theorem, see \cite[Lemma 3.1]{PV-dicho}),
we obtain
a normed space $(\R^{c'n^{1/2}},\widetilde f(\cdot))$ such that the standard basis $\{e_i\}_{i=1}^{c'n^{1/2}}$
is $Cn^{c/2}$--unconditional, and, moreover,
$\Exp|\partial_i \widetilde f(G')|=\Exp|\partial_j \widetilde f(G')|$ for all $i\neq j$, where $G'$ is the standard 
Gaussian vector in $\R^{c'n^{1/2}}$.
The $Cn^{c/2}$--unconditionality of the standard basis then implies, similarly to the argument in \eqref{019874019874098},
\begin{align*}
\sum\limits_{i=1}^{c'n^{1/2}}\big(\Exp|\partial_i \widetilde f(G')|\big)^2
&\ls C'n^{c/2}\Exp \widetilde f(G') \Exp|\partial_1 \widetilde f(G')|\\
&=
C''n^{c/2-1/2}\Exp \widetilde f(G') \sum\limits_{i=1}^{c'n^{1/2}}\Exp|\partial_i \widetilde f(G')|\\
&\ls C''' n^{c-1/2}(\Exp \widetilde f(G'))^2.
\end{align*}
If $(\Exp \widetilde f(G'))^2\ls n^{c}\Exp\|\nabla \widetilde f(G')\|_2^2$ and
assuming that the constant $c>0$ is sufficiently small, the norm $\widetilde f(\cdot)$
satisfies assumptions of Theorem~\ref{th: smalldev main}
with $c'n^{1/2}$ in place of $n$, and with $\delta:=256c$,
implying the lower deviation estimates for $\widetilde f(\cdot)$
(the conditions $\delta\sqrt{\log n}\,\Lip(\widetilde f)\ls \Exp \widetilde f(G')$ and
$\Prob\big\{|\widetilde f(G')-\Exp \widetilde f(G')|\gr s\, \Exp \widetilde f(G')\big\}
\ls 2\exp(-\delta s\log n),\quad s\gr 0$, follow from the argument in the proof of Theorem~3.4 of \cite{PV-dicho}).
On the other hand, if $(\Exp \widetilde f(G'))^2\gr n^{c}\,\Exp\|\nabla \widetilde f(G')\|_2^2$
then the deviation estimates follow immediately from Theorem~\ref{thm:sdh}. %~\ref{thm:sdi} and the Poincar\'e inequality.
Finally, we apply the argument from \cite{PV-dicho} (see the proof of Theorem 4.5 there)
to ``lift'' the lower deviation estimates for $\widetilde f(\cdot)$ back to an appropriate linear transformation of $(\R^n,f(\cdot))$.
\end{proof}

\bigskip
%%%%%%%%%%%%%%% References %%%%%%%%%%%%%%%%%%
\bibliography{hs-sbe-ref}
\bibliographystyle{alpha}

%%%%%%%%%%%%%%%%%%%%%%%%%%%%%%%%%%%%%%%%

\end{document}